\documentclass[12pt]{article}
\usepackage{amsmath}
\usepackage{graphicx}
\usepackage{enumerate}
\usepackage{natbib}
\usepackage{url} % not crucial - just used below for the URL 
\usepackage{lscape} 

\usepackage[utf8]{inputenc}
\usepackage{amsthm}
\usepackage{amssymb}
\usepackage{color}
\usepackage{comment}
\usepackage{appendix}
\usepackage{bm}
\usepackage{titlesec}
\titlelabel{\thetitle.\quad}

\definecolor{darkRed}{rgb}{0.60,.03,.03}

\usepackage{tikz}
\usetikzlibrary{shapes,snakes, graphs}

% These packages are for the drawing package in LaTeX
\usetikzlibrary{arrows,%
 petri,%
 topaths}%
\usepackage{pgf}
\usepackage{subfigure,tikz}
\usetikzlibrary{arrows,automata}
\usepackage{svg}

\usepackage%[backref=page]
{hyperref}
\hypersetup{
 colorlinks=true,
 linkcolor=blue,
 filecolor=magenta,
 citecolor=darkRed,
 urlcolor=cyan,
 pdftitle={Optimal Tests of the Composite Null Hypothesis},
 % pdfpagemode=FullScreen,
 }
 \usepackage{float}
 \usepackage{tabularx}

\urlstyle{same}

\pdfminorversion=4
% NOTE: To produce blinded version, replace "0" with "1" below.
\newcommand{\blind}{1}

% DON'T change margins - should be 1 inch all around.
\addtolength{\oddsidemargin}{-.5in}%
\addtolength{\evensidemargin}{-.5in}%
\addtolength{\textwidth}{1in}%
\addtolength{\textheight}{.3in}%
\addtolength{\topmargin}{-.8in}%

\newtheorem{theorem}{Theorem}

\newtheorem{corollary}{Corollary}
\newtheorem{lemma}{Lemma}
\def\ci{\mbox{\ensuremath{\perp\!\!\!\perp}}}

\newcommand{\bz}{\bar{z}}
\newcommand{\zb}{\underline{z}}
\newcommand{\1}{\textbf{1}}

\allowdisplaybreaks[1]

\begin{document}

\def\spacingset#1{\renewcommand{\baselinestretch}%
{#1}\small\normalsize} \spacingset{1}

%%%%%%%%%%%%%%%%%%%%%%%%%%%%%%%%%%%%%%%%%%%%%%%%%%%%%%%%%%%%%%%%%%%%%%%%%%%%%%

\if1\blind { \title{Optimal Tests of the Composite Null Hypothesis Arising in
 Mediation Analysis} \author{Caleb H.~Miles, Antoine Chambaz \thanks{Caleb
 H. Miles is Assistant Professor, Department of Biostatistics, Columbia
 University, New York, NY 10032, USA (email: cm3825@cumc.columbia.edu); and Antoine Chambaz is Professor of
 Applied Mathematics, MAP5 (UMR CNRS 8145), Universit\'{e} Paris Cit\'{e},
 Paris,
 France.}\hspace{.2cm}} %The authors gratefully acknowledge... This work was funded, in part, by... The contents are solely the responsibility of the authors and do not represent the official views of the funding institutions.}\hspace{.2cm}}
 \date{}
 \maketitle
 %\bigskip
} \fi

\if0\blind
{
 \bigskip
 \bigskip
 \bigskip
 \begin{center}
 {\LARGE\bf Optimal Tests of the Composite Null Hypothesis Arising in Mediation Analysis}
\end{center}
 \medskip
} \fi

\bigskip

\begin{abstract}
\noindent The indirect effect of an exposure on an outcome through an intermediate variable can be identified by a product of two regression coefficients under certain causal and regression modeling assumptions. In this context, the null hypothesis of no indirect effect is a composite null hypothesis, as the null holds if either regression coefficient is zero. A consequence is that traditional hypothesis tests are severely underpowered near the origin (i.e., when both coefficients are small with respect to standard errors). We propose hypothesis tests that (i) preserve level alpha type~1 error, (ii) meaningfully improve power when both true underlying effects are small relative to sample size, and (iii) preserve power when at least one is not. One approach gives a closed-form test that is minimax optimal with respect to local power over the alternative parameter space. Another uses sparse linear programming to produce an approximately optimal test for a Bayes risk criterion. We discuss adaptations for performing large-scale hypothesis testing as well as modifications that yield improved interpretability. We provide an R package that implements our proposed methodology.
\end{abstract}
\noindent%
{\it Keywords:} Bayes risk optimality, Causal inference, Large-scale hypothesis testing, Non-uniform asymptotics, Similar test \vfill

\newpage
\spacingset{1.5} % DON'T change the spacing!
\section{\centering INTRODUCTION}
\label{sec:intro}

Mediation analysis is a widely popular discipline that seeks to understand the
mechanism by which an exposure affects an outcome by learning about
intermediate events that transmit part or all of its effect. For instance, if one is interested in the effect of an exposure $A$ on an outcome $Y$, one
might posit that at least in part, $A$ affects $Y$ by first affecting some
intermediate event $M$, which in turn affects $Y$. %The effect along such a causal pathway is known as an indirect effect. 
%Mediation analysis has a long history dating back to the 1920s with the path analysis work of Sewall Wright \citep{wright1921correlation}, and has been applied across a vast array of fields, such as economics, epidemiology, medicine, psychology, and sociology. In fact, one foundational article \citep{baron1986moderator} currently has well over 100,000 citations. In more recent decades, \citet{robins1992identifiability} and \citet{pearl2001direct} have grounded definitions of mediated effects in the causally-explicit language of counterfactuals, which has lead to an explosion of mediation research in the field of causal inference (see \citet{vanderweele2015explanation} for an overview). However, 
Despite its long
history and broad application, with an exception from recent work
\citep{van2024nearly}, existing hypothesis tests of a
single mediated effect are overly conservative and underpowered
in a certain region of the alternative hypothesis space. %, or do not preserve type~1 error uniformly over the null hypothesis space. 
In this article, we
demonstrate the reason for this suboptimal behavior, and develop new
hypothesis tests that deliver optimal power in some
decision theoretic senses while preserving type~1 error uniformly over the composite null subspace of the parameter space. We focus on the case in which the indirect effect is identified by a product
of two coefficients or functions of coefficients that can be estimated with
uniform joint convergence to a bivariate normal distribution with diagonal covariance matrix. In fact, our
approach is not limited to the realm of mediation analysis, but can be applied
to test for any product of two coefficients that are estimable in the above
sense. %We also discuss extensions to the setting in which one is testing the product of more than two coefficients being equal to
% zero in the Supplementary Materials.

Briefly, the inferential problem that arises in mediation analysis is that
when the mediated effect is identified by a product of two coefficients, even
while the joint distribution of estimators of these coefficients may converge
uniformly to a bivariate normal distribution, neither their product nor the minimum of their absolute values will converge uniformly to a Gaussian law. Thus, asymptotic approximation-based univariate test statistics that are
a function of either of these summaries of the two coefficient estimates will have poor
properties in an important region of the parameter space for any given finite
sample. %Further, another popular test based on testing each coefficient separately is underpowered near the origin of the parameter space (i.e., where both coefficients are zero) by virtue of taking the intersection of the rejection regions of these two tests. This is a consequence of the nonstandard geometry of this nonconvex composite null hypothesis space.

Previously, \citet{mackinnon2002comparison} and \citet{barfield2017testing}
presented %results from 
simulation studies comparing
hypothesis tests of a mediated effect. Both articles
observed that all existing tests are either underpowered in some scenarios or
do not preserve nominal type~1 error in all scenarios. Both articles found
the so-called joint significance test to be the best overall performing test,
while \citet{barfield2017testing} found a bootstrap-based test to have
comparable performance across some scenarios. %, with the exception of the setting with a rare binary outcome. 
\citet{huang2019genome} proved that the
joint significance test yields a smaller $p$-value than those of both
normality-based tests and normal product-based tests. %While others have conducted simulation studies comparing standard error estimators of a mediated effect,
%\citep{stone1990robustness, mackinnon1993estimating,
 %mackinnon1995simulation}. these are closely related to, yet distinct fromhypothesis testing, which is our focus in this article.
\citet{barfield2017testing} highlighted the problem of large-scale
hypothesis testing for mediated effects, which has
become a popular inferential task in genomics, where genomic measures such
as gene expression or DNA methylation are hypothesized to be potential
mediators of the effect of genes or other exposures on various outcomes. In
fact, the conservative behavior of mediation hypothesis
tests is most evident in large-scale hypothesis testing,
since one can compare the distribution of $p$-values with the uniform
distribution. \citet{huang2019genome} showed
that the joint significance and delta method tests both generate
$p$-values forming a distribution that stochastically dominates the uniform
distribution. However, while this behavior is readily
observable in large-scale hypothesis testing, the problem
is no different for a single hypothesis test; it is
merely less obvious.

Recently, \citet{huang2019genome}, \citet{dai2022multiple}, \citet{liu2022large}, and \cite{du2023methods} developed
large-scale hypothesis testing procedures for mediated
effects that account for the conservativeness of standard tests. The latter compared these methods as well as traditional methods in a large simulation study. The method
proposed by \citet{huang2019genome} is based on a distributional assumption
about the two coefficients among all distributions being tested that satisfy
the null hypothesis. %By contrast, 
The methods of
\citet{dai2022multiple}, \citet{liu2022large}, and \cite{du2023methods} involve estimating the proportions of three
components of the composite null hypothesis: when both coefficients are zero,
when one is zero and the other is not, and vice versa. \citet{dai2022multiple} and \cite{du2023methods} then relaxed the
threshold of the joint significance test and delta method-based test, respectively, based on these estimated proportions. \citet{liu2022large} used the estimated proportions to construct a new test statistic based on $p$-values attained under each of these components of the null hypothesis.
Our proposed methods resolve the conservativeness issue across the entire null
hypothesis space even for a single test, hence they require neither assumptions
about the distributions of the coefficients nor estimation of proportions of
components of the null hypothesis space. The latter is important, because in finite samples, there is no single distribution of a test statistic under the sub-case of the null hypothesis in which one parameter is null and the other is not. The distribution of a test statistic in this sub-case can vary from being well-approximated by the distribution when one parameter is zero and the other is infinite to being well-approximated by the distribution when both parameters are zero. %(indeed, this sub-case contains parameter values arbitrarily close to (0,0)). 
Instead, we can simply extend our
method to the large-scale hypothesis test setting while
adjusting for multiple testing using standard Bonferroni or
Benjamini--Hochberg corrections.

\citet{van2024nearly} is the most closely related work to this article. The authors 
%take a similar approach to ours, and 
develop a test for a mediated effect
based on defining a rejection region in $\mathbb{R}^2$. In their original version \citep{van2020almost}, they proved that a
similar test does not exist within a certain class of tests. In this article,
we show that a similar test does in fact exist in a slightly expanded class of
tests, and that it is the unique such test in this class (in the more recent version, the authors now recognize the existence of this test). They focus on
developing an almost-similar test, i.e., they approximate a
similar test %while preserving type~1 error 
by optimizing a
particular objective function involving a penalty for deviations from the
nominal type~1 error level. By contrast, we consider two approaches for
optimizing decision theoretic criteria that characterize power while
preserving type~1 error. The solution to one of these has a closed form and is
an exactly similar test. The other is an approximate Bayes risk optimal test
inspired by the test of \citet{rosenblum2014optimal}.

The remainder of the article is organized as follows. In Section~\ref{sec:prelim}, we formalize the problem and explain the shortcomings of
traditional tests. In Sections~\ref{sec:minimax} and~\ref{sec:bayes}, we present the minimax
optimal and Bayes risk optimal tests, respectively. %In Section~\ref{sec:bayes}, we present the Bayes risk optimal test. 
In Section~\ref{sec:large-scale}, we discuss adaptations for large-scale mediation hypothesis testing. 
%In Section~\ref{sec:pval}, we discuss $p$-values corresponding to the
%former tests and false discovery rate control. %In Section~\ref{sec:extensions}, we discuss extensions to tests of products of more than two coefficients. 
In Section~\ref{sec:interpretability}, we discuss interpretability challenges with the proposed tests, and introduce modifications leading to improved interpretability. 
In Section~\ref{sec:simulations}, we present results from
simulation studies. In Section~\ref{sec:data-analysis}, we apply our methodology to two data sets: one % from the U.S.~National Institute of Mental Health's Database of Cognitive Training and Remediation Studies 
to test whether cognition mediates the effect of cognitive remediation therapy on self-esteem %social functioning 
in patients with schizophrenia, and another to %perform %large-scale hypothesis 
test many mediation hypotheses of whether a host of DNA methylation CpG sites mediate the effect of smoking status on lung function. In Section~\ref{sec:discussion}, we conclude with a discussion. 

\section{\centering PRELIMINARIES}
\label{sec:prelim}

%\antoine{}{I add subsections mainly for argumentation purpose. We could move
% Section~\ref{subsec:mediation:analysis} (almost three-page long!) to the
% appendix. That would give more momentum to the paper\ldots}

%\subsection{General Setup}
%\label{subsec:set:up}

We begin with the general problem statement, which we will then connect to mediation
analysis. %in Section~\ref{subsec:mediation:analysis}. 
Suppose we have $n$
independent, identically distributed (i.i.d.)~observations from a
distribution $P_{\bm{\delta}}$ indexed by the parameter $\bm{\delta}=(\delta_x,\delta_y)^{\top}\in\mathbb{R}^2$, %which contains two scalar parameters $\delta_x$ and $\delta_y$, 
and we wish to test the null
hypothesis $H_0: ``\delta_x\delta_y=0"$ against its alternative
$H_1: ``\delta_x\delta_y\neq 0"$. Further, suppose we have a uniformly asymptotically normal and unbiased %an asymptotically normal 
estimator \citep{robins1997toward} $(\hat{\delta}_x, \hat{\delta}_y)$ for $(\delta_x, \delta_y)$, i.e., 
\begin{equation}
 \label{eq:CLT}
 \sup_{\delta_x,\delta_y}\left\lvert\mathrm{Pr}_{\delta_x,\delta_y}\left[\sqrt{n}\bm{\Sigma_n}^{-1/2}\left\{(\hat{\delta}_x, \hat{\delta}_y)^{\top}
 - (\delta_x,\delta_y)^{\top}\right\}\leq[t_x,t_y]^{\top}\right] - \Phi_2([t_x,t_y]^{\top})\right\rvert \rightarrow 0
\end{equation}
as $n\rightarrow\infty$ for all $(t_x,t_y)^{\top}\in\mathbb{R}^2$,
%where the convergence
%\begin{equation}
% \label{eq:CLT}
% \sqrt{n}\bm{\Sigma_n}^{-1/2}\left\{(\hat{\delta}_x, \hat{\delta}_y)^{\top}
% - (\delta_x,\delta_y)^{\top}\right\} \rightsquigarrow \mathcal{N}\left\{(0, 0)^{\top},
% \bm{I_{2}}\right\}
%\end{equation}
% \antoine{}{to the centered Gaussian law with identity covariance
%is uniform in $(\delta_x, \delta_y)$, 
where $\bm{\Sigma_n}$ is a diagonal matrix that is a consistent estimator
of the asymptotic covariance matrix of $(\hat{\delta}_x, \hat{\delta}_y)^{\top}$, %($\bm{I_2}$ is the $2\times 2$ identity matrix), 
and $\Phi_2$ is the cumulative distribution function of the bivariate Gaussian distribution with identity covariance. We wish to test $H_0$ against $H_{1}$ with at most $\alpha$ type~1
error for each $(\delta_x,\delta_y)$ satisfying $\delta_x\delta_y=0$
and maximizing power elsewhere. %(in some sense) everywhere else. 
Clearly, $H_0$ is a
very particular type of composite null hypothesis, which in the
space spanned by $\delta_x$ and $\delta_y$ consists of the $x$ and $y$ axes.
%since $\delta_x\delta_y=0$ whenever $\delta_x=0$ or $\delta_y=0$.

%\subsection{The Composite Null Hypothesis in Mediation Analysis}
%\label{subsec:mediation:analysis}

The composite null hypothesis $H_0$ arises naturally in mediation analysis
under certain modeling assumptions. %To discuss the indirect effect of interest, we first introduce notation. 
Suppose we observe $n$ i.i.d.~copies of
$(\bm{C}^\top,A,M,Y)^\top$, where $A$ is the exposure of interest, $Y$ is the
outcome of interest, $M$ is a potential mediator, %that is temporally intermediate to $A$ and $Y$, 
and $\bm{C}$ is a vector of baseline covariates that is assumed 
%we will assume throughout to be sufficient 
to control for %various sorts of
confounding %needed for 
such that the indirect effect is %to be 
identified. The \emph{natural indirect effect} (NIE) is the mediated effect of $A$ on $Y$ through $M$. %, i.e., the effect along the causal pathway $A\rightarrow M\rightarrow Y$. 
We formally define the NIE %in terms of nested counterfactuals 
and discuss its identification in the Supplementary Materials. If the linear models with main
 effect terms given by
\begin{align}
 \label{eq:model1}
 E(M\mid A=a, \bm{C}=\bm{c}) &= \beta_0 + \beta_1a + \bm{\beta_2}^\top \bm{c},\\
 \label{eq:model2}
 E(Y\mid A=a,M=m,\bm{C}=\bm{c}) &= \theta_0 + \theta_1a + \theta_2m + \bm{\theta_3}^\top \bm{c}
\end{align}
are correctly-specified, then the identification formula for the
natural indirect effect reduces to $\beta_1\theta_2$. The \emph{product
 method} estimator is the product of estimates of $\beta_1$ and $\theta_2$ obtained by fitting the above
regression models. Under models \eqref{eq:model1} and \eqref{eq:model2} and standard regularity
conditions, the two factors of the product method estimator will jointly satisfy the
uniform convergence statement in~\eqref{eq:CLT}. In fact,
\eqref{eq:CLT} will hold for a more general class of models, though there are
important limitations to this class. For instance, consider the outcome model
with exposure-mediator interaction replacing model \eqref{eq:model2}:
\begin{align}
 \label{eq:model2bis}
 E(Y\mid A=a,M=m,\bm{C}=\bm{c}) &= \theta_0 + \theta_1a + \theta_2m + \theta_3am +
 \bm{\theta_4}^\top \bm{c}.
\end{align}
Under models \eqref{eq:model1} and \eqref{eq:model2bis}, the natural indirect
effect comparing treatment levels $a'$ and $a''$ is identified by
$(\theta_2+\theta_3a')\beta_1(a'-a'')$, which is naturally estimated by $(\hat{\theta}_2+\hat{\theta}_3a')\hat{\beta}_1(a'-a'')$, where
$\hat{\theta}_2$, $\hat{\theta}_3$, and $\hat{\beta}_1$ are estimated by
fitting the linear regression models \eqref{eq:model1} and
\eqref{eq:model2bis}. Setting $\delta_x=\theta_2+\theta_3a'$, $\delta_y=\beta_1(a'-a'')$, $\hat{\delta}_x=\hat{\theta}_2+\hat{\theta}_3a'$, and $\hat{\delta}_y=\hat{\beta}_1(a'-a'')$, the joint estimator will also satisfy \eqref{eq:CLT} under standard
regularity conditions. %, with $\delta_x=\theta_2+\theta_3a'$ and $\delta_y=\beta_1(a'-a'')$. %and with the corresponding plug-in estimators for $\hat{\delta}_x$ and $\hat{\delta}_y$.

%\subsection{Problems With Traditional Tests}
%\label{subsec:circumvent}

The problem with traditional tests of the composite null $H_0$
against its alternative $H_{1}$ stems from the fact that while
$(\hat{\delta}_x, \hat{\delta}_y)$ converges uniformly in
$\bm{\delta}$, certain scalar functions of these estimators, e.g., the product method estimator
$\hat{\delta}_x\hat{\delta}_y$, do not. This can be seen by examining the
delta method approach proposed by \citet{sobel1982asymptotic} (often referred
to as the Sobel test). %Consider the case in which $\bm{\Sigma_n}$ in \eqref{eq:CLT} is diagonal with diagonal elements $s_x^2$ and $s_y^2$. This will be the case under models \eqref{eq:model1} and \eqref{eq:model2} or \eqref{eq:model1} and \eqref{eq:model2bis} due to the required no-unobserved-confounding assumptions. 
When $\bm{\delta} \neq (0,0)^{\top}$,
the delta method yields
$\sqrt{n}(\hat{\delta}_x\hat{\delta}_y-\delta_x\delta_y)\rightsquigarrow
\mathcal{N}(0,\delta_y^2\sigma_x^2+\delta_x^2\sigma_y^2)$, where $\sigma_x^2$ and $\sigma_y^2$ are the diagonal elements of the limit in probability of $\bm{\Sigma}_n$. Thus, when
precisely one of $\delta_x$ or $\delta_y$ is zero,
$Z_n^{\text{prod}}:=n^{1/2}\hat{\delta}_x\hat{\delta}_y/(\hat{\delta}_y^2s_x^2+\hat{\delta}_x^2s_y^2)^{1/2}\rightsquigarrow \mathcal{N}(0,1)$, 
%\[Z_n^{\text{prod}}:=\frac{\sqrt{n}\hat{\delta}_x\hat{\delta}_y}{\sqrt{\hat{\delta}_y^2s_x^2+\hat{\delta}_x^2s_y^2}}=\frac{\sqrt{n}(\hat{\delta}_x\hat{\delta}_y-\delta_x\delta_y)}{\sqrt{\hat{\delta}_y^2s_x^2+\hat{\delta}_x^2s_y^2}} \rightsquigarrow \mathcal{N}(0,1),\] 
where $s^2_x$ and $s^2_y$ are
consistent estimates of $\sigma^2_x$ and $\sigma^2_y$, respectively. However,
as $\bm{\delta}\rightarrow (0,0)^{\top}$, this no longer holds, as the denominator also goes to zero. \citet{liu2022large} showed that this instead converges to a centered normal distribution with variance 1/4. 
%asymptotic variance shrinks to zero so that, at $(\delta_x, \delta_y) = (0,0)$, the asymptotic distribution is degenerate. Instead of converging at an $n^{-1/2}$ rate, $\hat{\delta}_x\hat{\delta}_y$ converges at an $n^{-1}$ rate when $(\delta_x, \delta_y) = (0,0)$. Specifically, $n\hat{\delta}_x\hat{\delta}_y/s_xs_y$ converges in law to the product-normal distribution, i.e., the law of $W_1W_2$ when $W_1$ and $W_2$ are independently drawn from the law $\mathcal{N}(0,1)$. 
Thus, the
convergence in distribution of the delta method test statistic $Z_n^{\text{prod}}$ is not
uniform, and for any given sample size, %no matter how large, 
there will be
a region in the parameter space around $(0,0)$ where the standard normal
approximation is poor. This is especially
problematic, as the region around $(0,0)$ is precisely where we expect
the truth to often lie. %, unlike many other cases in which convergence is not uniform but approximations are poor only near the boundary of the parameter space.
As $\bm{\delta}$ approaches $(0,0)$, %one moves closer to the origin in the parameter space, 
%the true distribution of 
$Z_n^{\text{prod}}$ begins to more closely resemble $N(0,1/4)$, as we show in a
Monte Carlo sampling approximation of density functions in Figure S1.a in the Supplementary Materials, 
%Figure~\ref{fig:density}.a, 
where we fix $\delta_y=0$ and vary $\delta_x$ from 0
to 0.3.
\begin{comment}
\begin{figure}[h]
 \centering \includegraphics[width=.67\textwidth]{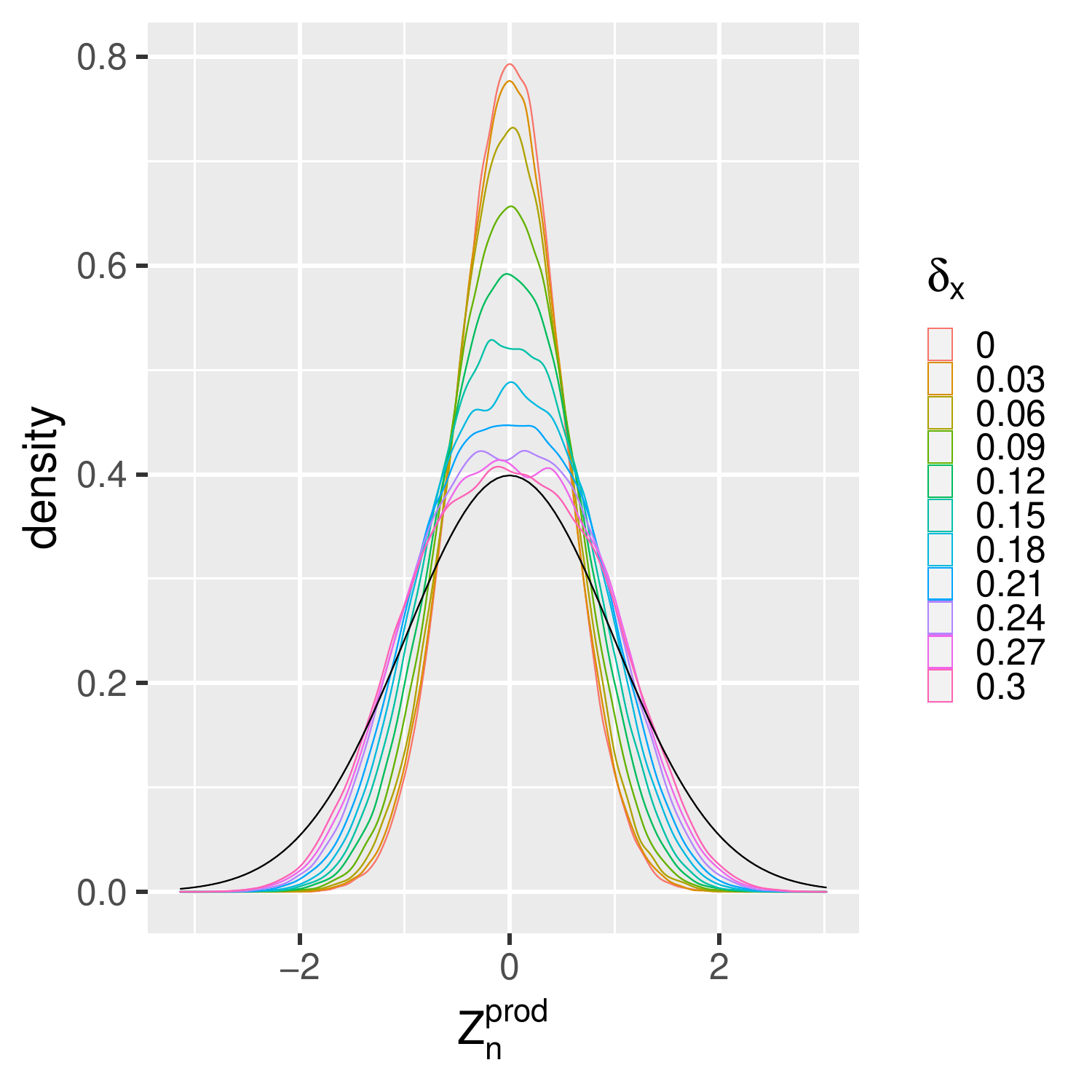}
 \caption{Density plots estimated by Monte Carlo sampling of $Z_n$ under $\delta_y=0$ and varying $\delta_x$ with $n=100$. The standard normal density curve is displayed in black.}
 \label{fig:density}
\end{figure}
\end{comment}
\begin{comment}
\begin{figure}[h]
\centering
\begin{tabular}{ccc}
\includegraphics[width=.45\textwidth]{density_plots_1e5}
& 
&
\includegraphics[width=.45\textwidth]{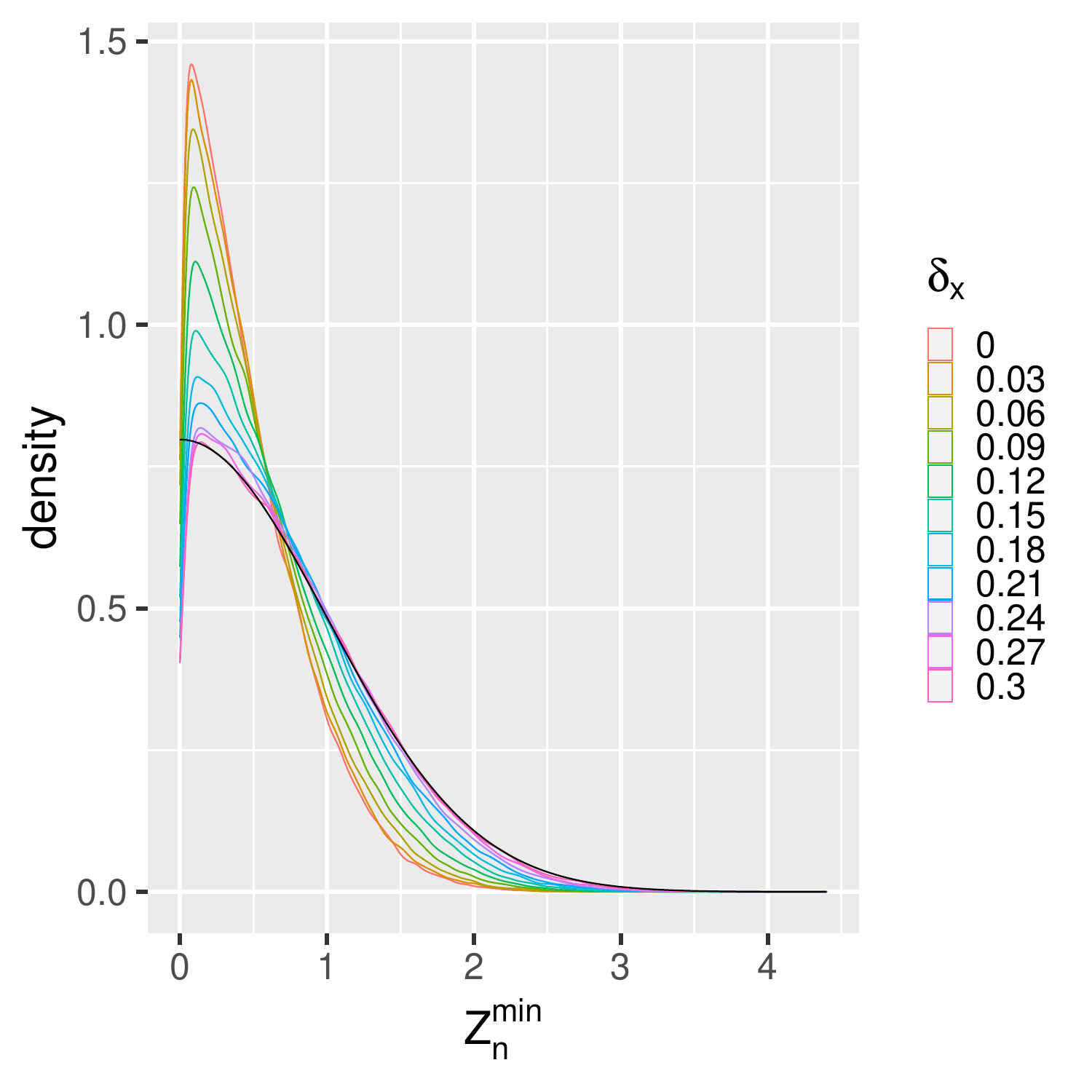}
\\
(a)	&	&	(b)
\end{tabular}
\caption{(a) Density plots estimated by Monte Carlo sampling of $Z_n^{\text{prod}}$ under $\delta_y=0$ and varying $\delta_x$ with $n=100$. The standard normal density curve is displayed in black. (b) Density plots estimated by Monte Carlo sampling of $Z^{\text{min}}_n$ under $\delta_y=0$ and varying $\delta_x$ with $n=100$. The folded standard normal density curve is displayed in black.}
\label{fig:density}
\end{figure}
\end{comment}
The delta method uses an approximation based on the standard normal distribution, when in reality, the test statistic's true distribution is much more concentrated about zero, as shown in the figure. %we can see in Figure~\ref{fig:density}.a. 
%This explains the conservative behavior of the delta method-based test near the origin.
Hypothesis tests based on inverting bootstrap confidence
intervals are also known to perform poorly near the origin of the parameter
space due to the %. Bootstrap theory requires continuity in the asymptotic distribution over the parameter space.
%\citep{barfield2017testing}. 
%As we have seen, there is a 
singularity in the pointwise
asymptotic distribution at the origin.

Another popular test of the composite null $H_0$ against its alternative $H_{1}$ is known as the joint significance test \citep{cohen2013applied}, which is %an intersection-union test 
based on the logic that both $\delta_x$ and $\delta_y$ must be nonzero for
$\delta_x\delta_y$ to be nonzero. The joint significance test with nominal level $\alpha$ amounts to testing the two null hypotheses
$H_0^x: ``\delta_x=0"$ and $H_0^y: ``\delta_y=0"$ against their
 alternatives $H_{1}^{x}:``\delta_x\neq 0"$ and
 $H_{1}^{y}:``\delta_y\neq 0"$ separately using the Wald statistics
$Z^x_n:=\sqrt{n}\hat{\delta}_x/s_x$ and
$Z^y_n := \sqrt{n}\hat{\delta}_y/s_y$, and rejecting $H_0$ for $H_{1}$ only if $H_0^x$ and $H_0^y$ are both rejected for their alternatives %by the corresponding Wald tests 
with nominal level $\alpha$.
As noted, this test has been shown to perform better in
simulations than other common tests of $H_0$ against $H_{1}$. However, it
still suffers from the lack of power near the origin of the parameter space we
see in other tests. This is again due to a lack of uniform convergence near the origin. The joint significance test implicitly relies on the approximation $Z_n^{\text{min}} := \vert Z_n^x\vert \wedge \vert Z_n^y\vert \sim \vert \mathcal{N}\vert(0, 1)$, where $\vert \mathcal{N}\vert (0, 1)$ represents the folded standard normal distribution. This approximation is poor when $\delta_x$ and $\delta_y$ are both close to zero, as is shown in Figure S1.b in the Supplementary Materials. %\ref{fig:density}.b, 
%where we once again fix $\delta_y=0$ and vary $\delta_x$ from 0 to 0.3. %The true distribution of $\vert Z_n^x\vert \wedge \vert Z_n^y\vert$ is again more concentrated near zero than that of its approximation.

To avoid the problems that arise due to non-uniform convergence, we instead %focus on 
construct a rejection region in the parameter space spanned by the vector $(\delta_x,\delta_y)$ in $\mathbb{R}^2$, rather than that spanned by $\delta_x\delta_y$ in $\mathbb{R}$. In the former, convergence is uniform and there are no discontinuities in the asymptotic distribution. %over the parameter space. 
Studying the rejection regions of the traditional tests in $\mathbb{R}^2$ illustrates their conservative behavior from another perspective. Consider a sequence of local noncentrality parameters with respect to a given
direction $(\delta_x,\delta_y)^\top$,
$(\delta_{x,n},\delta_{y,n})^\top:=
(\delta_x/n^{1/2},\delta_y/n^{1/2})^\top$, and define
$\bm{\delta}^*=(\delta_x^*,\allowbreak \delta_y^*)^\top\allowbreak :=\allowbreak \mathrm{plim}_{n\rightarrow\infty}\allowbreak n^{1/2}\allowbreak \bm{\Sigma_n}^{-1/2}\allowbreak (\delta_{x,n},\allowbreak \delta_{y,n})^\top\allowbreak =\allowbreak \bm{\Sigma}^{-1/2}\allowbreak (\delta_x,\allowbreak \delta_y)^\top$
where $\mathrm{plim}$ is the limit in probability and $\bm{\Sigma} := \mathrm{plim}_{n\rightarrow\infty} \bm{\Sigma}_n$. %When $\bm{\Sigma}$ is diagonal as in the mediation case, $(Z^x_n,Z^y_n)^\top$ will simply be the pair of Wald statistics $(\sqrt{n}\hat{\delta}_x/s_x^2, \sqrt{n}\hat{\delta}_y/s_y^2)^{\top}$ for testing $H_0^x$ and $H_0^y$ against $H_{1}^{x}$ and $H_{1}^{y}$ separately as discussed above. 
Define the bivariate test statistic
$(Z^x_n,Z^y_n)^\top:=n^{1/2}\bm{\Sigma_n}^{-1/2}(\hat{\delta}_x,\hat{\delta}_y)^\top$
so that $(Z^x_n,Z^y_n)^\top$ is approximately distributed as
$\mathcal{N}\{(\delta_x^*,\allowbreak\delta_y^*)^\top,\allowbreak\bm{I_2}\}$, and let $(Z_*^x, Z_*^y)^\top\sim \mathcal{N}\{(\delta_x^*,\allowbreak\delta_y^*)^\top,\allowbreak\bm{I_2}\}$. %be a random vector following this limiting distribution. 
Based on this
asymptotic approximation, the probability of the Wald test rejecting $H_0^x$
for $H_1^x$ at nominal level $\alpha$ is $\alpha$ under all elements of
$\{\bm{\delta}^*:\delta_x^*=0\}$, and likewise for the Wald test rejecting $H_0^y$ for $H_1^y$ for all $\{\bm{\delta}^*:\delta_y^*=0\}$. 
The rejection region in $\mathbb{R}^2$ corresponding to the joint significance test is the intersection of these two rejection regions, as shown in the plot in Figure \ref{fig:js}(a). 
\begin{comment}
\begin{figure}[h]
\centering
 \includegraphics[width=.45\textwidth]{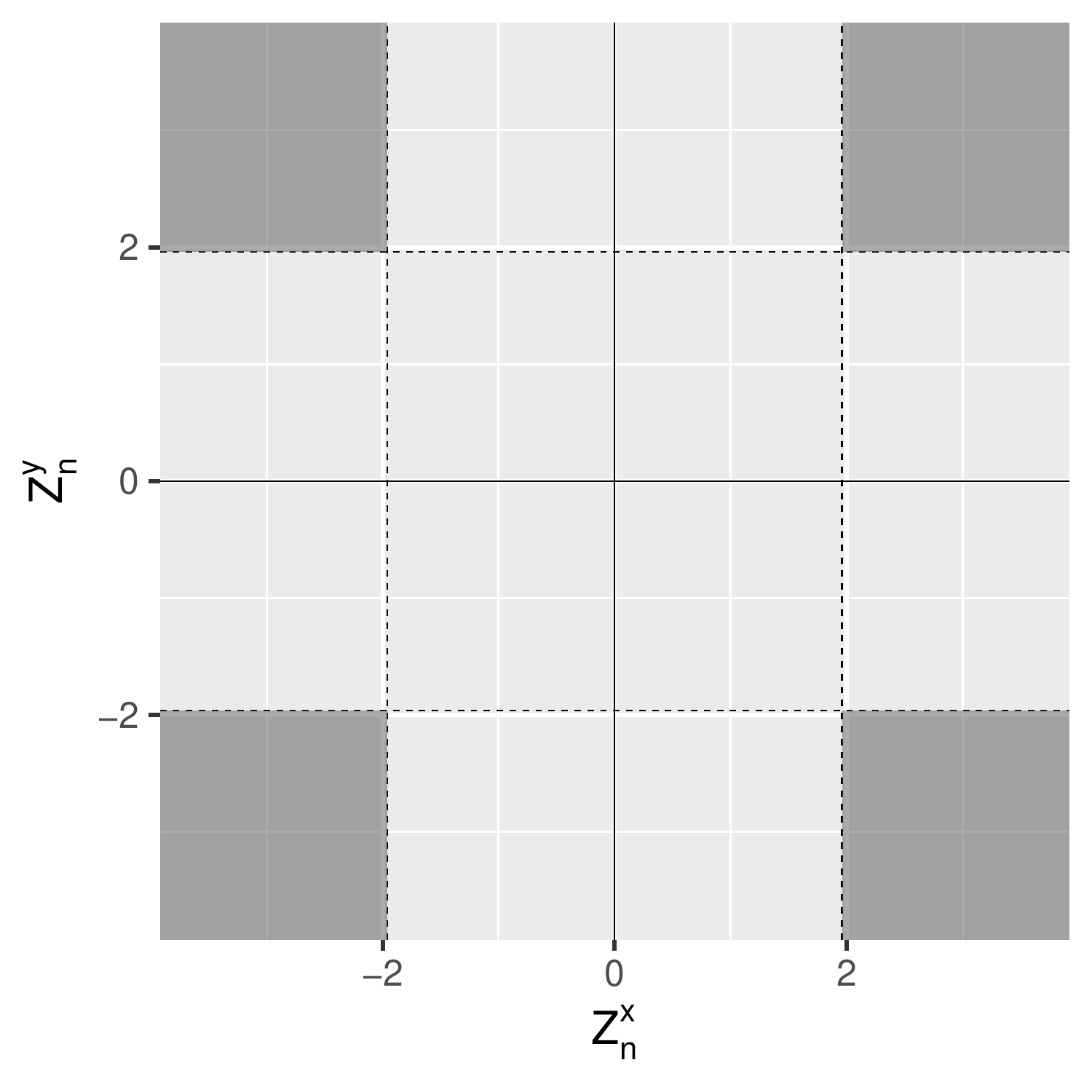} 
 \caption{The rejection region of the joint significance test is shown in darker gray. Dashed lines indicate the boundaries of the rejection regions of the Wald tests for $H_0^x$ and $H_0^y$.}
 \label{fig:js}
\end{figure}
\end{comment}
\begin{figure}[h]
\centering
\begin{tabular}{ccc}
\includegraphics[width=.45\textwidth]{JS_plot}
& 
&
\includegraphics[width=.45\textwidth]{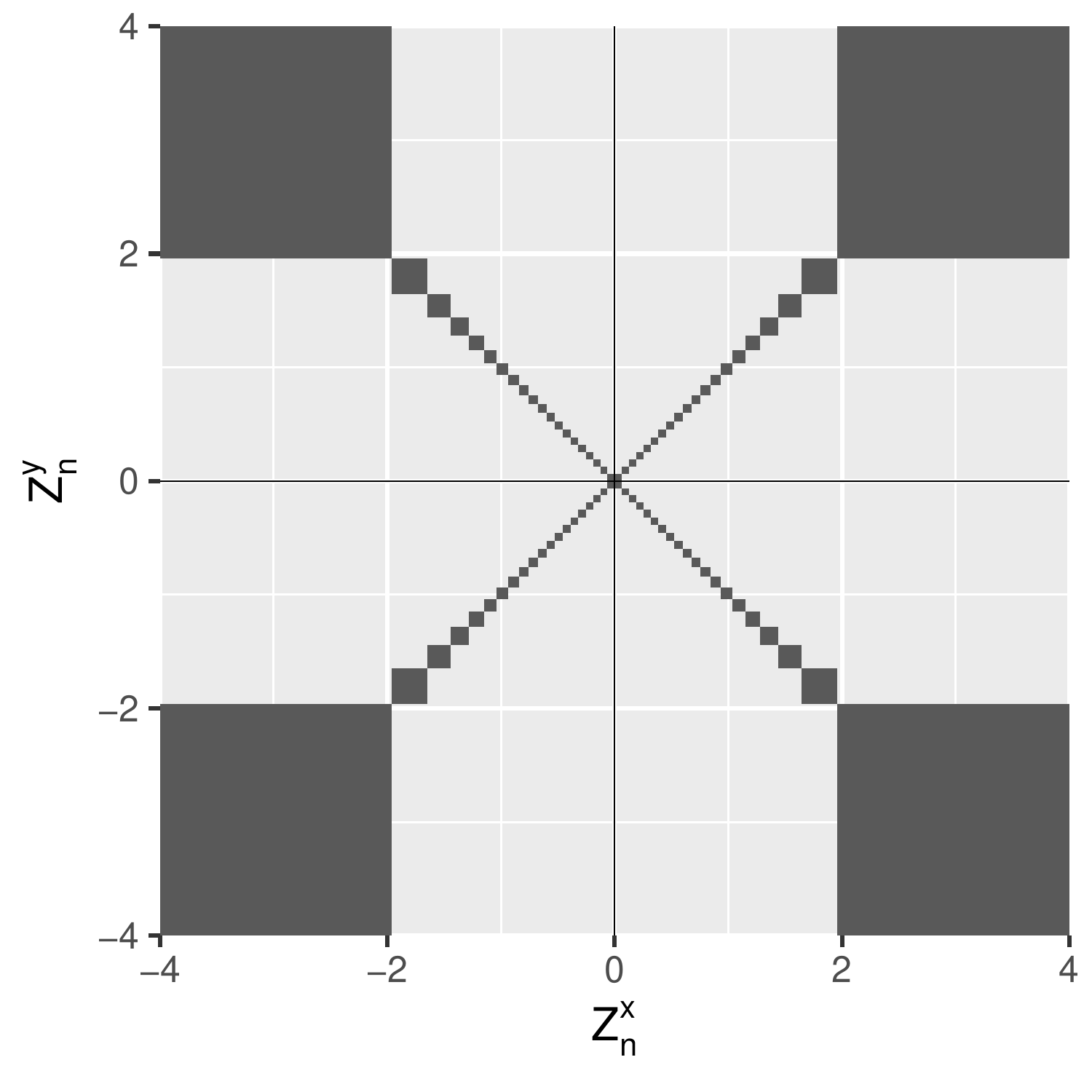}
\\
(a)	&	&	(b)
\end{tabular}
\caption{(a) The rejection region of the joint significance test is shown in darker gray. Dashed lines indicate the boundaries of the rejection regions of the Wald tests for $H_0^x$ and $H_0^y$. (b) The rejection region $R_{mm}$ for $\alpha=0.05$ is shown in dark
 gray.}
\label{fig:js}
\end{figure}
Because we take the intersection of rejection regions of two size-$\alpha$ tests, the resulting test is necessarily conservative. In fact, when $\bm{\delta}^*=(0,0)^{\top}$, the probability of being in the
joint significance test rejection region is $\alpha^2$. %, well below the desired level $\alpha$. 
By continuity of the rejection probability in
$\bm{\delta}^*$, the rejection probability is likewise well below
$\alpha$ in a region around the origin, hence the underpoweredness of the
joint significance test. That is, the worst-case type 2 error of the joint
significance test is
\begin{align*}
 \sup_{\bm{\delta}^*:\delta^*_x\delta^*_y\neq 0} \mathrm{Pr}_{\bm{\delta}^*}\{(Z^x_*, Z^y_*)\notin R\}
 %&=1-\inf_{(\delta^*_x,\delta^*_y):\delta^*_x\delta^*_y \neq 0}\mathrm{Pr}_{\delta^*_x,\delta^*_y}\{(Z^x_*, Z^y_*)\in R\}\\
 =1-\min_{\bm{\delta}^*:\delta^*_x\delta^*_y = 0}\mathrm{Pr}_{\bm{\delta}^*}\{(Z^x_*, Z^y_*)\in R\}=1-\alpha^2.%=0.9975.
\end{align*}
\cite{huang2019genome} and \cite{van2024nearly} showed that the delta method-based test's rejection region is contained in that of the joint significance test, and therefore the former is uniformly more conservative than the latter. %, hence it suffers from the same problem. 
In the following two sections, we define new tests by constructing
a rejection region that preserves type~1 error uniformly over the null
hypothesis space, but improves power in the alternative hypothesis space,
particularly near the origin of the parameter space.

\section{\centering MINIMAX OPTIMAL TEST}
\label{sec:minimax}

In this section, we seek a test defined by a rejection region
$R^*\subset\mathbb{R}^2$ satisfying
$\sup_{\bm{\delta}^*:\delta^*_x\delta^*_y=
 0}\allowbreak \mathrm{Pr}_{\bm{\delta}^*}\allowbreak \{(Z_*^x,\allowbreak Z_*^y)\allowbreak \in \allowbreak R^*\}
 \allowbreak =\allowbreak \alpha$
 \begin{comment}
\begin{align}
 \sup_{(\delta^*_x,\delta^*_y):\delta^*_x\delta^*_y=
 0}\mathrm{Pr}_{\delta^*_x,\delta^*_y}\{(Z_*^x,Z_*^y)\in R^*\}
 =\alpha
\end{align}
\end{comment}
and the minimax optimality criterion
\begin{align}
 \label{eq:minimax}
 \inf_{\bm{\delta}^*:\delta^*_x\delta^*_y \neq
 0}\mathrm{Pr}_{\bm{\delta}^*}\{(Z_*^x,Z_*^y)\in R^*\}
 =\sup_{R\subset
 \mathbb{R}^{2}}\inf_{\bm{\delta}^*:\delta^*_x\delta^*_y \neq
 0}\mathrm{Pr}_{\bm{\delta}^*}\{(Z^x_*, Z^y_*)\in R\}. 
\end{align}
The rejection region $R^{*}$ generates a test with
 type~1 error $\alpha$ achieving the minimax risk of the 0-1 loss
 function, i.e., yielding the largest worst-case power. By continuity of
the power function in $\bm{\delta}^*$, the minimax optimal power is
upper bounded by $\alpha$. %, since the rejection probability can be at most $\alpha$ at any given value of $(\delta^*_x,\delta^*_y)$ in the null hypothesis space, and can be arbitrarily close to this rejection probability in a small enough neighborhood around that point, which will contain elements of the alternative hypothesis space. 
Moreover, if \eqref{eq:minimax} is
attainable, a minimax optimal test generated by the rejection region $R$ must satisfy
\begin{equation}
 \label{eq:similarity}
 \mathrm{Pr}_{\bm{\delta}^*}\{(Z_*^x,Z_*^y)\in R\}=\alpha \quad
 \text{for all } 
 \bm{\delta}^*\in\mathbb{R}^{2} \text{ such that }
 \delta^*_x\delta^*_y=0. 
\end{equation}
A test satisfying \eqref{eq:similarity} is known as a \emph{similar test}. %\citep{lehmann2005testing}. %Technically, this only needs to hold on the boundary of the null hypothesis space; however, in our case the null hypothesis space is its own boundary.
Similarity is an important property that is closely tied to uniformly most powerful unbiased tests in the classical hypothesis testing literature. %(e.g., see Chapter 4 of \cite{lehmann2005testing}). 
In our case, it is important because a non-similar test must have type 1 error either greater than or less than $\alpha$ somewhere in the null hypothesis space. In the former case, the test will fail to preserve type 1 error. In the latter, %by continuity of the rejection probability as a function of the true parameter, 
the rejection probability will necessarily be less than $\alpha$ in some neighborhood that also contains the alternative hypothesis space. Thus, a non-similar test will necessarily be a biased test and underpowered in a region of the alternative hypothesis space.

\subsection{A Minimax Optimal and Similar Test for Unit Fraction $\alpha$}
We now define a rejection region that generates a similar test for any unit
fraction $\alpha$, i.e., any $\alpha$ such that $\alpha^{-1}$ is a positive
integer, hence a test that satisfies the minimax optimality criterion
\eqref{eq:minimax}. Let $a_k=\Phi^{-1}(k\alpha/2)$ for $k=0,\ldots,2/\alpha$,
where $\Phi$ is the standard normal distribution function. Define $R_{mm}:=\{\bigcup_{k=1}^{2/\alpha} (a_{k-1},a_k)\times(a_{k-1},a_k)\}\cup\{\bigcup_{k=1}^{2/\alpha} (a_{k-1},a_k)\times(-a_k,-a_{k-1})\}.$
\begin{comment}
\begin{align*}
 R_{mm}:=&\left\{\bigcup\limits_{k=1}^{2/\alpha} (a_{k-1},a_k)\times(a_{k-1},a_k)\right\}\cup\left\{\bigcup\limits_{k=1}^{2/\alpha} (a_{k-1},a_k)\times(-a_k,-a_{k-1})\right\}.
\end{align*}
\end{comment}
The region $R_{mm}$ for $\alpha=0.05$ is depicted in the plot in Figure
\ref{fig:js}(b). %, and consists of squares along the $y=x$ and $y=-x$ diagonals.
\begin{comment}
\begin{figure}[h]
 \centering \includegraphics[width=.45\textwidth]{rej_plot_0-05-1.pdf}
 \caption{The rejection region $R_{mm}$ for $\alpha=0.05$ is shown in dark
 gray.}%, with the squares in the corners continuing out to infinity.}
 \label{fig:minimax}
\end{figure}
\end{comment}
Clearly, $R_{mm}$ contains the rejection region of the joint significance
test, hence it yields a test that is uniformly more powerful. %A plot of its power function surface over $\mathbb{R}^2$ is available in the Supplementary Materials.
\begin{theorem}
\label{thm:similar}
The rejection region $R_{mm}$ satisfies \eqref{eq:similarity} and generates a similar test.
\end{theorem}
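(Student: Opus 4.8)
The plan is to verify the similarity condition \eqref{eq:similarity} by a direct computation that exploits two structural features: the coordinates $Z_*^x$ and $Z_*^y$ are independent (the limiting covariance is $I_2$), and $R_{mm}$ is a finite disjoint union of axis-aligned rectangles whose sides are the cells of a fixed grid. The null space $\{\delta^*_x\delta^*_y=0\}$ is the union of the two coordinate axes, so it suffices to treat a point with $\delta^*_y=0$ — whence $Z_*^y\sim\mathcal N(0,1)$ and $Z_*^x\sim\mathcal N(\delta^*_x,1)$ with $\delta^*_x$ arbitrary — since a point with $\delta^*_x=0$ is handled by the identical argument with the roles of the two coordinates interchanged. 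First I would record the elementary consequences of the assumption that $\alpha^{-1}$ is a positive integer. Writing $a_k=\Phi^{-1}(k\alpha/2)$ for $k=0,\dots,2/\alpha$ and $I_k:=(a_{k-1},a_k)$ for $k=1,\dots,2/\alpha$, the integrality of $\alpha^{-1}$ gives $a_0=-\infty$ and $a_{2/\alpha}=+\infty$; the sequence $(a_k)$ is strictly increasing, so the $I_k$ partition $\mathbb R$ up to finitely many points; each $I_k$ has standard-normal mass $\Phi(a_k)-\Phi(a_{k-1})=\alpha/2$; and the grid is symmetric about the origin, $a_{2/\alpha-k}=-a_k$, whence $-I_k=I_{2/\alpha-k+1}$.

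Next I would split $R_{mm}=A\sqcup B$ with $A=\bigcup_{k=1}^{2/\alpha} I_k\times I_k$ (the squares along $y=x$) and $B=\bigcup_{k=1}^{2/\alpha} I_k\times(-I_k)$ (the squares along $y=-x$); by the previous paragraph $B$ is again a disjoint union of products of grid cells, and $A$ and $B$ are disjoint because every square of $A$ lies in the open first or third quadrant while every square of $B$ lies in the open second or fourth quadrant. Using independence and disjointness,
\begin{align*}
 \mathrm{Pr}_{\delta^*_x,0}\{(Z_*^x,Z_*^y)\in R_{mm}\}
 &=\sum_{k=1}^{2/\alpha}\mathrm{Pr}\{Z_*^x\in I_k\}\,\mathrm{Pr}\{Z_*^y\in I_k\}
 +\sum_{k=1}^{2/\alpha}\mathrm{Pr}\{Z_*^x\in I_k\}\,\mathrm{Pr}\{Z_*^y\in -I_k\}.
\end{align*}
Because $Z_*^y\sim\mathcal N(0,1)$, both $\mathrm{Pr}\{Z_*^y\in I_k\}$ and $\mathrm{Pr}\{Z_*^y\in -I_k\}$ equal $\alpha/2$ for every $k$ (the second by symmetry of the normal law), so each sum equals $\tfrac{\alpha}{2}\sum_{k}\mathrm{Pr}\{Z_*^x\in I_k\}=\tfrac{\alpha}{2}$, the last step because the $I_k$ partition $\mathbb R$ regardless of the mean $\delta^*_x$. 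Adding the two contributions gives exactly $\alpha$, which is \eqref{eq:similarity}; interchanging the two marginals in this computation disposes of the points on the $y$-axis.

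I do not expect a genuine obstacle — the substance is entirely in arranging the bookkeeping so the telescoping is visible. The two points requiring care are (i) that the anti-diagonal squares of $B$ really are products of cells of the same grid, which is exactly where $a_{2/\alpha-k}=-a_k$ (hence the unit-fraction hypothesis, since it forces $\Phi^{-1}(1)=+\infty$ to be a grid endpoint) is used, and (ii) the disjointness of $A$ and $B$, needed before probabilities may be added. Note that the diagonal covariance enters only through the independence of $Z_*^x$ and $Z_*^y$, and that the argument yields the value $\alpha$ uniformly in the free coordinate — this uniformity, which is precisely what a similar test demands, falls out because the free marginal appears only through the total sum $\sum_k\mathrm{Pr}\{Z_*^x\in I_k\}=1$.
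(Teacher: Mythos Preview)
Your proof is correct and follows essentially the same approach as the paper's: fix $\delta^*_y=0$, use independence to reduce to the $y$-section of $R_{mm}$ at a given $Z^x_*$, observe that this section always has standard-normal mass $\alpha$, and then integrate out $Z^x_*$ using that the intervals $I_k$ partition $\mathbb{R}$ up to a null set. Your write-up is somewhat more explicit than the paper's about the disjointness of the diagonal and anti-diagonal blocks and about where the unit-fraction hypothesis enters (via $a_{1/\alpha}=0$ and $a_{2/\alpha-k}=-a_k$), but the substance is the same.
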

%All proofs are presented in the appendix. 
The essence of the proof of Theorem
\ref{thm:similar} is that when $\delta^*_y=0$ (resp.~$\delta^*_x=0$), whatever level $Z^x_*$ (resp.~$Z^y_*$) takes, the conditional probability of $Z^y_*$ (resp.~$Z^x_*$) being in $R_{mm}$ given $Z^x_*$ (resp.~$Z^y_*$) equals $\alpha$, since
the probability of being in each vertical interval is $\alpha/2$ under the
standard normal law. %distribution. %The same holds for $Z^x_*$ when $\delta^*_x=0$
%by symmetry.

%Similarity is an important property that has been closely related to uniformly most powerful unbiased tests in the classical hypothesis testing literature (e.g., see Chapter 4 of \cite{lehmann2005testing}). In our case, it is important because a non-similar test must have type 1 error either greater than or less than $\alpha$ somewhere in the null hypothesis space. In the former case, the test will fail to preserve type 1 error. In the latter, by continuity of the rejection probability as a function of the true parameter, the rejection probability will necessarily be less than $\alpha$ in some neighborhood that also contains the alternative hypothesis space. Thus, a non-similar test will necessarily be a biased test and underpowered in a region of the alternative hypothesis space.

\begin{corollary}
\label{cor:minimax}
The rejection region $R_{mm}$ satisfies the minimax optimality criterion
\eqref{eq:minimax} with
$\inf_{\bm{\delta}^*:\delta^*_x\delta^*_y \neq
 0}\mathrm{Pr}_{\bm{\delta}^*}\{(Z_*^x,Z_*^y)\in R_{mm}\}=\alpha$.
\end{corollary}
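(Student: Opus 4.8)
The plan is to pin down both sides of \eqref{eq:minimax} and their common value $\alpha$. For the right-hand side, the discussion preceding Theorem~\ref{thm:similar} already shows that any region $R$ with type~1 error at most $\alpha$ satisfies $\inf_{\delta^*_x\delta^*_y\neq0}\mathrm{Pr}_{\delta^*_x,\delta^*_y}\{(Z^x_*,Z^y_*)\in R\}\le\alpha$: fix any point on the null, where the rejection probability is $\le\alpha$, approach it by alternatives, and invoke continuity of the power function. So the right-hand side of \eqref{eq:minimax} is $\le\alpha$. It therefore suffices to show that $R_{mm}$ attains it, i.e.\ that $\inf_{\delta^*_x\delta^*_y\neq0}\mathrm{Pr}_{\delta^*_x,\delta^*_y}\{(Z^x_*,Z^y_*)\in R_{mm}\}=\alpha$; combined with Theorem~\ref{thm:similar}, which says $R_{mm}$ has level $\alpha$, this makes $R_{mm}$ a maximizer and identifies the minimax value as $\alpha$. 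In fact I would prove the stronger statement $\mathrm{Pr}_{\delta^*_x,\delta^*_y}\{(Z^x_*,Z^y_*)\in R_{mm}\}\ge\alpha$ for \emph{every} $(\delta^*_x,\delta^*_y)\in\mathbb{R}^2$; together with Theorem~\ref{thm:similar} (equality on the null) and continuity, this forces the infimum over the alternative to equal $\alpha$.

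To prove $\mathrm{Pr}_{\delta^*_x,\delta^*_y}\{(Z^x_*,Z^y_*)\in R_{mm}\}\ge\alpha$, I would first rewrite the rejection probability. With $I_k=(a_{k-1},a_k)$, the region $R_{mm}$ is a disjoint union of the rectangles $I_k\times I_k$ and $I_k\times(-I_k)$, $k=1,\dots,2/\alpha$, so, for independent $Z^x_*\sim\mathcal{N}(\delta^*_x,1)$ and $Z^y_*\sim\mathcal{N}(\delta^*_y,1)$,
\[
\mathrm{Pr}_{\delta^*_x,\delta^*_y}\{(Z^x_*,Z^y_*)\in R_{mm}\}
=\sum_{k}\mathrm{Pr}(Z^x_*\in I_k)\,w_k(\delta^*_y)
=\int_{\mathbb{R}}h_{\delta^*_y}(t)\,\phi(t-\delta^*_x)\,dt,
\]
where $\phi$ is the standard normal density, $w_k(\nu):=\mathrm{Pr}(\mathcal{N}(\nu,1)\in I_k)+\mathrm{Pr}(\mathcal{N}(\nu,1)\in -I_k)$, and $h_\nu:=\sum_k w_k(\nu)\mathbf{1}_{I_k}$ is the step function equal to $w_k(\nu)$ on $I_k$. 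Since $-I_k=I_{2/\alpha+1-k}$, the sequence $(w_k(\nu))_k$ is symmetric about its center and $h_\nu$ is an even function; and because $\sum_k w_k(\nu)=2$ while each cell has standard normal mass $\alpha/2$, evaluating the display at $\delta^*_x=0$ gives exactly $\alpha$. Hence it is enough to show that, for each fixed $\nu$, the map $\delta^*_x\mapsto\int h_\nu(t)\phi(t-\delta^*_x)\,dt$ is minimized at $\delta^*_x=0$.

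This reduces to two facts. The first, and the crux, is that $h_\nu$ is bathtub-shaped, i.e.\ $w_1(\nu)\ge w_2(\nu)\ge\cdots\ge w_{1/\alpha}(\nu)$ (so $h_\nu$ is non-decreasing in $|t|$). Here I would use $\phi(t-\nu)+\phi(t+\nu)=2e^{-\nu^2/2}\cosh(\nu t)\phi(t)$, which gives $w_k(\nu)=2e^{-\nu^2/2}\int_{I_k}\cosh(\nu t)\,d\Phi(t)$; the cells $I_1,\dots,I_{1/\alpha}$ all lie in $(-\infty,0]$, carry equal standard normal mass $\alpha/2$, and have supports ordered so that $I_k$ lies entirely to the left of $I_{k+1}$, while $\cosh(\nu\,\cdot)$ is non-increasing on $(-\infty,0]$; a stochastic-dominance comparison of $\Phi$ restricted and renormalized to successive cells then yields $w_k(\nu)\ge w_{k+1}(\nu)$. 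The second fact is that shifting a Gaussian away from the origin cannot decrease its integral against an even bathtub-shaped function: writing $h_\nu(t)=h_\nu(0)+\int_{(0,\infty)}\mathbf{1}\{|t|\ge s\}\,d\ell(s)$ for a non-decreasing $\ell$ and interchanging integrals, it suffices that $\delta^*_x\mapsto\mathrm{Pr}(|\delta^*_x+Z|\ge s)=\bar\Phi(s-\delta^*_x)+\bar\Phi(s+\delta^*_x)$ be minimized at $\delta^*_x=0$ for every $s\ge0$, which is a short calculus check. Combining, $\int h_\nu(t)\phi(t-\delta^*_x)\,dt\ge\int h_\nu(t)\phi(t)\,dt=\alpha$, which finishes the argument. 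I expect the bathtub-shape claim to be the only real obstacle: it is precisely where the diagonal-square geometry of $R_{mm}$ has to be reconciled with the Gaussian likelihood, and the $\cosh$-reweighting identity is the device that makes the comparison across cells transparent.
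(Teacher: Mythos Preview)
Your argument is correct and substantially more complete than the paper's. The paper's own proof (the two-sentence argument that appears commented out in the source, echoing the discussion before Theorem~\ref{thm:similar}) invokes only similarity and continuity of the power function: since the rejection probability equals $\alpha$ on the null, continuity forces the power to approach $\alpha$ along any alternative sequence converging to a null point, so the infimum of power over the alternative is at most $\alpha$; together with the general bound $\sup_R\inf\le\alpha$ this is offered as the whole argument. But that reasoning never establishes the other inequality, $\inf\ge\alpha$, which is exactly what is needed to conclude that $R_{mm}$ attains the supremum rather than falling strictly below it. You close this gap by proving the stronger statement that $\mathrm{Pr}_{\delta^*_x,\delta^*_y}\{(Z^x_*,Z^y_*)\in R_{mm}\}\ge\alpha$ at \emph{every} $(\delta^*_x,\delta^*_y)$, i.e., unbiasedness of the test---a fact the paper displays numerically in Figure~\ref{fig:power} but does not prove. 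Your route is also genuinely different in its mechanics: you express the power as a Gaussian convolution of the even step function $h_\nu$, verify the bathtub shape of $h_\nu$ via the identity $\phi(t-\nu)+\phi(t+\nu)=2e^{-\nu^2/2}\cosh(\nu t)\phi(t)$ and a change of variables to equal-$\Phi$-mass cells, and then reduce to the elementary fact that $\mu\mapsto\Pr(|\mu+Z|\ge s)$ is minimized at $\mu=0$. This buys you both a complete proof of the corollary and the unbiasedness claim that the paper only illustrates graphically.
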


\citet{van2020almost} gave a result stating that there is no similar test of $H_0$ against $H_1$ within a particular class of tests defined by a rejection region generated by rotating and reflecting %about various axes 
a monotonically increasing function from zero to infinity. While Corollary \ref{cor:minimax} may appear to contradict this result, $R_{mm}$ does not in fact belong to the class considered by \citet{van2020almost}, but rather belongs to a class that relaxes the restriction that the function must be monotonically increasing to allow for it to be monotonically nondecreasing. %(though the more recent version \citep{van2024nearly} recognizes the existence of this function). 
In fact, the following theorem states that the test generated by $R_{mm}$ is the only similar test in this relaxed class up to a set of measure zero.
\begin{theorem}
\label{thm:unique}
Let $\mathcal{F}$ %=\{f:f(x)\rightarrow [0,x], x\in [0,\infty), f\mathrm{ is monotonically nondecreasing}\}$
be the class of all monotonically nondecreasing functions mapping all $x$ from the nonnegative real numbers to $[0,x]$, and for any $f\in\mathcal{F}$, let $R_f$ be the region generated by taking all possible negations and permutations of the region $\{(x,y):y\in(f(x),x]\}$, or equivalently reflecting this region about the $x$, $y$, $y=x$, and $y=-x$ axes, and rotating it $\pi/2$, $\pi$, and $3\pi/2$ radians about the origin. The function $x\mapsto f_{mm}(x):=\sum_{k=1/\alpha}^{2/\alpha}a_{k-1}I(a_{k-1} \leq x < a_k)$, which generates $R_{mm}$, is the unique function in $\mathcal{F}$ (up to a set of measure zero) that generates a similar test.
\end{theorem}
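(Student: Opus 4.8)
The content of Theorem~\ref{thm:unique} beyond Theorem~\ref{thm:similar} is uniqueness, so the plan is to take an arbitrary $f\in\mathcal{F}$ whose region $R_f$ induces a similar test and show $f=f_{mm}$ off a null set. I would first reduce similarity to a one-dimensional statement about the vertical slices of $R_f$. Because $R_f$ is invariant under $(x,y)\mapsto(y,x)$, similarity on the whole null is equivalent to similarity on the axis $\{\delta^*_y=0\}$, i.e.\ to $\mathrm{Pr}_{t,0}\{(Z_*^x,Z_*^y)\in R_f\}=\alpha$ for every $t\in\mathbb{R}$. Writing $\phi$ for the standard normal density and $g_f(u)$ for the standard normal mass of the slice $\{y:(u,y)\in R_f\}$, Fubini turns this into $(\phi*g_f)(t)=\alpha$ for all $t$. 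Since $g_f$ is measurable and bounded by $1$, and $\phi*(g_f-\alpha)\equiv 0$, injectivity of Gaussian convolution on bounded functions forces $g_f\equiv\alpha$ a.e.: differentiating under the integral at $t=0$ and invoking completeness of the Hermite polynomials in $L^2(\phi\,dx)$ shows $g_f-\alpha$ is orthogonal to every polynomial against $\phi\,dx$, hence vanishes a.e.

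Next I would compute $g_f(u)$ for $u>0$. The line $x=u$ meets exactly four of the eight $D_4$-images of the base region $\{(x,y):x\ge 0,\ f(x)<y\le x\}$, namely the four in the right half-plane: the base region itself, its reflection across the $x$-axis, its reflection across $y=x$, and its rotation by $3\pi/2$. Slicing these and introducing the generalized inverse $\beta_f(u):=\sup\{b\ge 0:f(b)<u\}$, a short bookkeeping computation yields $g_f(u)=2\bigl(\Phi(\beta_f(u))-\Phi(f(u))\bigr)$ for $u>0$ (the $y$-axis reflection makes $g_f$ even, so it suffices to treat $u>0$; note $g_f(u)=0$ when $f(u)=u$, so similarity already forces $f(u)<u$ and hence $\beta_f(u)\ge u$ for a.e.\ $u$). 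Combined with the first step, similarity is equivalent to $\Phi(\beta_f(u))-\Phi(f(u))=\alpha/2$ for a.e.\ $u>0$. It is cleanest to change variables via $X=\Phi(u)$ and set $\psi(X):=\Phi(f(\Phi^{-1}(X)))$, which is nondecreasing on $[1/2,1)$ with $\psi(X)\le X$ and $\psi(1/2)=1/2$; the equation becomes $\psi^{-1}(X)-\psi(X)=\alpha/2$ a.e.\ on $[1/2,1)$, where $\psi^{-1}(X)=\sup\{Y:\psi(Y)<X\}$, and $f_{mm}$ corresponds to the staircase $\psi_{mm}$ that equals $c_{j-1}$ on $[c_{j-1},c_j)$ with $c_j:=1/2+j\alpha/2=\Phi(a_{1/\alpha+j})$.

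I would then prove by induction on $m=1,\dots,1/\alpha$ the statement $J_m$: ``$\psi\equiv c_{j-1}$ on $(c_{j-1},c_j)$ for all $j\le m$, and $\psi(c_m^+)=c_m$.'' The base fact $\psi(c_0^+)=c_0$ is immediate from $\psi(1/2)=1/2$ and $\psi(X)\le X$. The engine of the induction is the elementary identity $\lim_{X\downarrow c}\psi^{-1}(X)=\sup\{Y:\psi(Y)\le c\}$. Given $\psi(c_{m-1}^+)=c_{m-1}$, letting $X\downarrow c_{m-1}$ in the functional equation gives $\sup\{Y:\psi(Y)\le c_{m-1}\}=c_{m-1}+\alpha/2=c_m$, which together with monotonicity and $\psi(c_{m-1}^+)=c_{m-1}$ forces $\psi\equiv c_{m-1}$ on $(c_{m-1},c_m)$; feeding this back into the functional equation on $(c_{m-1},c_m)$, where $\psi^{-1}$ is then forced to equal the constant $c_m$, forces $\psi(c_m^+)=c_m$ (using $\psi(X)\le X$ once more), completing the step. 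Running the induction to $m=1/\alpha$ shows $\psi=\psi_{mm}$ on $[1/2,1)$ except possibly at the finitely many breakpoints $c_1,\dots,c_{1/\alpha-1}$, so $f=f_{mm}$ off a finite (hence measure-zero) set; the residual freedom of $\psi$ at those breakpoints — any value inside the jump also yields a similar test — is exactly why uniqueness can only be asserted up to a null set.

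I expect the main obstacle to be the disciplined handling of generalized inverses and exceptional null sets throughout the last step: pinning down precisely which points the almost-everywhere equation controls, justifying the one-sided limits $\psi(c_m^+)$ and $\lim_{X\downarrow c}\psi^{-1}(X)$, and disposing of the degenerate configurations ($f(u)=u$, or $\beta_f(u)<u$, on a positive-measure set) before the slice formula can be invoked. Conceptually, however, the crux is the Gaussian-convolution-injectivity step, since that is what upgrades the weak-looking statement ``power equals $\alpha$ everywhere on the null'' into the rigid pointwise identity $g_f\equiv\alpha$ from which the staircase is squeezed out.
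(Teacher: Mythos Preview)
Your proposal is correct and follows essentially the same route as the paper: both reduce similarity to the pointwise slice identity via completeness of the normal location family (you phrase it as injectivity of Gaussian convolution/Hermite density, the paper invokes the complete-statistic property directly), compute the slice mass as $2\{\Phi(f^{-1})-\Phi(f)\}$, and then run an induction on the levels $a_k$ to force the staircase. Your change of variables $\psi=\Phi\circ f\circ\Phi^{-1}$, which turns the identity into $\psi^{-1}-\psi=\alpha/2$, is a cosmetic streamlining of the paper's Lemma~\ref{lemma:step}, not a different argument.
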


%In addition to minimax optimality, this test has some other desirable properties. 
This test has a simple, exact closed form, making it very fast and straightforward to implement. Additionally, it is nonrandom and symmetric with respect to negation and permutations. %We choose the squares $(a_{k-1},a_k)\times(a_{k-1},a_k)$ to be open so that the test never rejects when $Z^x_n$ or $Z^y_n$ are precisely zero. %, though it makes no real difference since the boundaries of these squares make up a set of measure zero. 
%However, it is the case that
One less desirable property is that the test will reject for $(Z^x_n, Z^y_n)$ arbitrarily close to $(0,0)$, %This is a peculiar property of $R_{mm}$, 
which we discuss further in Section \ref{sec:interpretability}. However, the following result shows this is necessary in order to attain the best worst-case power.
%from Theorem \ref{thm:unique} we infer that at least within the class of tests generated by $\mathcal{F}$, this is necessary in order to attain the best worst-case power. In fact, more generally, we have the following result. 
\begin{theorem}
\label{thm:impossibility}
There is no similar test of $H_0$ against $H_1$ with a rejection region that is bounded away from $\{\bm{\delta}^*: \delta_x^*\delta_y^*=0\}$.
\end{theorem}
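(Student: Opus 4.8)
The plan is to argue by contradiction, leveraging the completeness of the Gaussian location family exactly as in the proof of Theorem~\ref{thm:unique}. Suppose $R\subseteq\mathbb{R}^2$ is the (measurable) rejection region of a similar test that is bounded away from the set $\{(x,y):xy=0\}$. Since the Euclidean distance from a point $(x,y)$ to the union of the two axes equals $\min(|x|,|y|)$, this hypothesis furnishes an $\epsilon>0$ with $R\subseteq\{(x,y):|x|\ge\epsilon\text{ and }|y|\ge\epsilon\}$; in particular every vertical section $R_x:=\{y:(x,y)\in R\}$ is empty whenever $|x|<\epsilon$.

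Next I would restrict attention to the axis $\delta^*_y=0$ and introduce the section-mass function $h(x):=\int\mathbf{1}\{(x,y)\in R\}\,\phi(y)\,dy=\mathrm{Pr}(Z^y_*\in R_x)$, where $\phi$ is the standard normal density and $Z^y_*\sim\mathcal{N}(0,1)$. By Tonelli's theorem $h$ is measurable, and clearly $0\le h\le 1$, so $h$ is bounded. Writing the rejection probability at $(\delta^*_x,0)$ as an iterated integral and using that $Z^x_*$ and $Z^y_*$ are independent under this law, similarity gives $E_{\delta^*_x}[h(Z^x_*)]=\mathrm{Pr}_{(\delta^*_x,0)}\{(Z^x_*,Z^y_*)\in R\}=\alpha$ for every $\delta^*_x\in\mathbb{R}$, the expectation being over $Z^x_*\sim\mathcal{N}(\delta^*_x,1)$.

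Then I would conclude as follows. This identity says $E_{\delta^*_x}[h(Z^x_*)-\alpha]=0$ for all $\delta^*_x$; since $\{\mathcal{N}(\delta^*_x,1):\delta^*_x\in\mathbb{R}\}$ is a complete family and $h-\alpha$ is bounded (hence integrable against every member), it follows that $h(x)=\alpha$ for Lebesgue-almost every $x$. But $h$ vanishes identically on the interval $(-\epsilon,\epsilon)$, which has positive Lebesgue measure, so $h$ cannot equal the positive constant $\alpha$ almost everywhere, a contradiction. (An equivalent route avoids citing completeness directly: the relation $(h*\phi)(\mu)\equiv\alpha$ is a deconvolution equation whose solution in $L^\infty$ is unique because the Gaussian has a nowhere-vanishing Fourier transform, again forcing $h\equiv\alpha$.)

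As for the main obstacle: there is no deep difficulty here---the argument is essentially the completeness trick already used in the proofs of Theorems~\ref{thm:similar} and~\ref{thm:unique}---but care is needed on the measure-theoretic bookkeeping: verifying measurability of the section function via Tonelli, checking that boundedness of $h-\alpha$ is precisely what licenses the completeness step (rather than assuming $h$ integrable in some stronger sense), and pinning down the precise meaning of ``bounded away from $\{xy=0\}$'' so that one really does obtain $R$ disjoint from a full two-sided strip around each axis. One should also note that $R$ may be unbounded, but since $h\le1$ all the integrals above converge without any tail condition.
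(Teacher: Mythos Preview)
Your proof is correct and follows essentially the same approach as the paper: both argue by contradiction, use completeness of the Gaussian location family along the axis $\delta_y^*=0$ to force the conditional rejection probability $h(Z_*^x)=\alpha$ almost everywhere, and then observe that $h$ vanishes on the strip $(-\epsilon,\epsilon)$ of positive measure. The paper's version is terser because it simply invokes the conditional-level identity \eqref{eq:condt1e} already established in the proof of Theorem~\ref{thm:unique}, whereas you spell out the completeness step and the measure-theoretic bookkeeping explicitly.
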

Thus, one cannot have a hypothesis test of $H_0$ that is both similar/unbiased and that fails to reject for all values of $(Z_n^x,Z_n^y)$ that are arbitrarily close to the null hypothesis space. %; there is a fundamental trade-off between these two properties.

%This property of $R_{mm}$ is justified because even when $(\delta^*_x,\delta^*_y)=(0,0)$, the type~1 error based on the asymptotic approximation remains exactly $\alpha$ despite rejecting when $(Z^x_n, Z^y_n)$ is arbitrarily close to $(0,0)$. This is in large part due to the fact that the probability of being in the squares in the rejection region surrounding the origin when $(\delta^*_x,\delta^*_y)=(0,0)$ is quite small at only $\alpha^2$. %This part of the rejection region is important to improve power for alternatives near the origin.%, under which there is similar probability of being in the origin-adjacent squares to that under $(\delta^*_x,\delta^*_y)=(0,0)$.

\subsection{Dealing With Non-Unit Fraction Values of $\alpha$}
The rejection region $R_{mm}$ is only defined above for unit fractions $\alpha$. For other values of $\alpha$, the class of tests generated by $\mathcal{F}$ will not contain a similar test, though nonrandom similar tests may exist outside of this class. Tests at other values of $\alpha$ are typically of interest for two reasons: using multiple testing adjustment procedures and defining $p$-values. We discuss the latter in the Supplementary Materials. %Section \ref{sec:pval}. 
The restriction that $\alpha$ must be a unit fraction is not a limitation for the Bonferroni correction procedure, which involves dividing the familywise error rate $\alpha$ by the number of tests, provided the familywise error rate $\alpha$ itself is a unit fraction. However, it is a limitation for the Benjamini--Hochberg procedure, which involves multiplying the false discovery rate $\alpha$ by a sequence of rational numbers, only some of which are unit fractions. We discuss the latter procedure in Section \ref{sec:large-scale}.

We now define an extension of the minimax optimal test to non-unit fraction values of $\alpha\in (0,1)$. When $\alpha$ is not a unit fraction, then this extension will not yield a minimax optimal test %, and as previously noted, will not yield 
or a similar test, as similar tests do not exist in this class of tests for such values. However, they do preserve type~1 error, and have type~1 error that approximates $\alpha$ under the least-favorable distribution in the null hypothesis very closely as $\alpha$ goes to zero. Let $b_{-1}:=0$ and $b_k:=\Phi^{-1}\{1-(\lfloor\alpha^{-1}\rfloor-k)\alpha/2\}$ for $k=0,\ldots,\lfloor\alpha^{-1}\rfloor$ so that $0=b_{-1}\leq b_0 < b_1 < \ldots < b_{\lfloor \alpha^{-1} \rfloor}=+\infty$. We define the rejection region for any $\alpha\in(0,1)$ to be $R_{mm}':=\{\bigcup_{k=0}^{\lfloor\alpha^{-1}\rfloor} (b_{k-1},b_k)\times(b_{k-1},b_k)\}\cup\{\bigcup_{k=0}^{\lfloor\alpha^{-1}\rfloor} (b_{k-1},b_k)\times(-b_k,-b_{k-1})\}
\cup\{\bigcup_{k=0}^{\lfloor\alpha^{-1}\rfloor} (-b_k,-b_{k-1})\times(b_{k-1},b_k)\}\cup\{\bigcup_{k=0}^{\lfloor\alpha^{-1}\rfloor} (-b_k,-b_{k-1})\times(-b_k,-b_{k-1})\}$.
\begin{comment}
\begin{align*}
R_{mm}':=&\left\{\bigcup\limits_{k=0}^{\lfloor\alpha^{-1}\rfloor} (b_{k-1},b_k)\times(b_{k-1},b_k)\right\}\cup\left\{\bigcup\limits_{k=0}^{\lfloor\alpha^{-1}\rfloor} (b_{k-1},b_k)\times(-b_k,-b_{k-1})\right\}\\
\cup&\left\{\bigcup\limits_{k=0}^{\lfloor\alpha^{-1}\rfloor} (-b_k,-b_{k-1})\times(b_{k-1},b_k)\right\}\cup\left\{\bigcup\limits_{k=0}^{\lfloor\alpha^{-1}\rfloor} (-b_k,-b_{k-1})\times(-b_k,-b_{k-1})\right\}.
\end{align*}
\end{comment}
When $\alpha$ is a unit fraction, $R'_{mm}$ coincides with $R_{mm}$, hence $R'_{mm}$ generates the minimax optimal test discussed earlier. Otherwise, the test generated by $R'_{mm}$ corresponds closely to the minimax optimal test in that if $\delta_y^*=0$, then given $Z^x_*=z$ with $z\notin[-b_0,b_0]$, the probability of being in the rejection region is exactly $\alpha$. %That is, $\mathrm{Pr}_{\delta^*_y=0}\allowbreak \left\{(Z^x_*,\allowbreak Z^y_*)\allowbreak \in \allowbreak R^*\mid Z^x_*\right\}\allowbreak =\allowbreak \alpha$ for all values of $Z^x_*$ except for in the four squares adjacent to the origin. 
Precisely:
\begin{theorem}
    \label{thm:non-unit:fraction}
    The rejection region $R_{mm}'$ is such that
    \begin{align}
      \label{eq:type1:inf}
      \lfloor\alpha^{-1}\rfloor\alpha^2+(1-\lfloor\alpha^{-1}\rfloor\alpha)^2
      &= \inf_{\bm{\delta}^*:\delta^*_x\delta^*_y=
        0}\mathrm{Pr}_{\bm{\delta}^*}\{(Z_*^x,Z_*^y)\in R_{mm}'\}\\
      \notag
      &\leq \sup_{\bm{\delta}^*:\delta^*_x\delta^*_y=
        0}\mathrm{Pr}_{\bm{\delta}^*}\{(Z_*^x,Z_*^y)\in R_{mm}'\} = \alpha.
    \end{align}
\end{theorem}
Thus, $R'_{mm}$ generates a size-$\alpha$ test. For all $\alpha$ between any two consecutive unit fractions, say $\alpha\in ((K+1)^{-1},K^{-1}]$, the difference between the lower bound in \eqref{eq:type1:inf} and the nominal type~1 error $\alpha$ is maximized at the midpoint, where it equals $\{4K(K+1)\}^{-1}=O(K^{-2})$. 
%As a consequence, type~1 error is minimized at $(\delta_x^*,\delta_y^*)=(0,0)$, where it will be $\lfloor\alpha^{-1}\rfloor\alpha^2+(1-\lfloor\alpha^{-1}\rfloor\alpha)^2$. Between any two consecutive unit fractions, say $K^{-1}$ and $(K+1)^{-1}$, the difference between this actual type~1 error and the nominal type~1 error $\alpha$ will be maximized at the midpoint, i.e., $\{K^{-1}+(K+1)^{-1}\}/2$, and will be $\{4K(K+1)\}^{-1}=O(K^{-2})$. 
Thus, the difference between the nominal and actual worst-case type~1 error is maximized at $\alpha=3/4$, where the actual worst-case type~1 error is 5/8, 1/8 less than the nominal type~1 error. However, as noted, this maximal difference shrinks at a quadratic rate in $\lfloor\alpha^{-1}\rfloor$, and is at most $1/1680\approx 0.0006$ for all $\alpha<0.05$.

\subsection{Asymptotic approximation considerations}
The results we have given so far have been in terms of the limit distribution, i.e., that of $(Z^x_*,Z^y_*)$. However, if the joint convergence of $(\hat{\delta}_x,\hat{\delta}_y)$ is uniform in $\bm{\delta}$ as in \eqref{eq:CLT}, then %the test statistic $(Z^x_n,Z^y_n)$ will inherit this uniform convergence such that 
the power $\beta_n(\delta_n):=\mathrm{Pr}_{\hat{\delta}_x,\hat{\delta}_y}\{(Z^x_n,Z^y_n)\in R_{mm}\}$ for the observed data distribution will converge uniformly to $\beta_*(\delta^*):=\mathrm{Pr}_{\bm{\delta}^*}\{(Z^x_*,Z^y_*)\in R_{mm}\}$, i.e., that of the limiting distribution. Thus, for any arbitrarily small tolerance $\epsilon>0$, there is an $N$ such that $\beta_n(\delta_n)$ will be within $\epsilon$ of $\beta_*(\delta^*)$ for all $\bm{\delta}$ and $n\geq N$. In particular, this means violations of type~1 error can be made arbitrarily small uniformly over $H_0$ given a sufficiently large sample size. The asymptotic approximation can be improved for a given finite sample $n$ by replacing all instances of the normal distribution with a $t$ distribution with $(n-1)$ degrees of freedom.
\begin{comment}
\begin{figure}[h]
\centering
 \includegraphics[width=.85\textwidth]{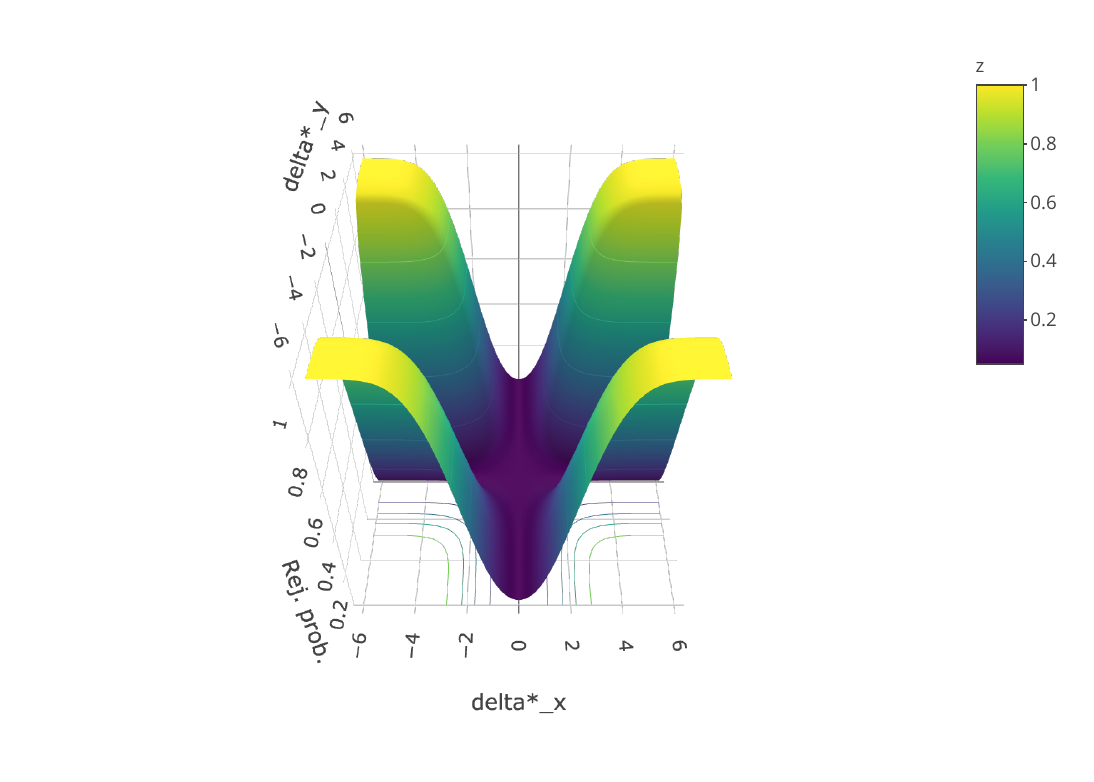} 
 \caption{The power function surface for the minimax optimal test over $(\delta^*_x,\delta^*_y)$. The power function takes the value 0.05 everywhere on the $\delta^*_x$ and $\delta^*_y$ axes, and is strictly greater than 0.05 everywhere else, while going to one as $\delta^*_x$ and $\delta^*_y$ both go to infinity.}
 \label{fig:power}
\end{figure}
\end{comment}

\section{\centering BAYES RISK OPTIMAL TEST}
\label{sec:bayes}
We now consider a Bayes risk optimality criterion. We draw inspiration from and modify the testing procedure of \citet{rosenblum2014optimal}, which gives approximately Bayes risk optimal tests for the distinct purpose of simultaneously testing for treatment effects both in the overall population and in two subpopulations. Their approach was to discretize the test statistic space into a fine grid of cells, then cast the problem as a constrained optimization problem, where the unknown parameters %to optimize over 
are the rejection probabilities in each cell, resulting in a rejection region that optimizes their particular Bayes risk. %they defined specific to their problem. %This approach turns out to be easily adapted to our problem of testing the composite null hypothesis $H_0$.

%Let $M=M(Z^x_*,Z^y_*,U)$ be the randomized test mapping into $\{0,1\}$, where 1 corresponds to reject, 0 corresponds to not reject, and $U$ is a uniform random variable with support $[0,1]$. 
We let $M : \mathbb{R}^2 \times [0, 1] \rightarrow \{0, 1\}$ denote a generic testing function of $H_0$ against $H_1$ that characterizes the randomized test consisting of rejecting $H_0$ for $H_1$ if and only if $M(Z^x_*,Z^y_*,U) = 1$, where $U$ is a uniform random variable on $[0, 1]$. As in \citet{rosenblum2014optimal}, it is purely for computational reasons that we consider a randomized test, as doing so yields a linear programming problem rather than an integer programming problem. %, the latter being far more computationally burdensome than the former. %, especially for our fairly high-dimensional optimization problem. 
%the choice of a randomized test is purely for computational reasons, as it will yield a linear programming problem rather than an integer programming problem, the latter of which is far more computationally burdensome, especially for our optimization problem, which will be fairly high dimensional. 
However, our solution is almost entirely deterministic, with very few cells having non-degenerate rejection probabilities. Let %$L\left\{M(Z^x_*,Z^y_*,U);\delta_x^*,\delta_y^*\right\}$ 
$L:\{0,1\}\times \mathbb{R}^2\rightarrow\mathbb{R}$ be a bounded loss function, and $\Lambda$ denote a prior distribution on $\bm{\delta}^*$. We consider a 0-1 loss function given by
\[L\{M(Z^x_*,Z^y_*,U),\bm{\delta}^*\}=M(Z^x_*,Z^y_*,U)I(\delta_x^*\delta_y^*=0)+\{1-M(Z^x_*,Z^y_*,U)\}I(\delta_x^*\delta_y^*\neq
 0),\] 
and a bounded quadratic loss function given by
\[L\{M(Z^x_*,Z^y_*,U),\bm{\delta}^*\}=\{1-M(Z^x_*,Z^y_*,U)\}\{(\delta_x^*\delta_y^*)^2\wedge 4\Phi^{-1}(1-\alpha/2)^2\}.\] 
We consider $\mathcal{N}\{0,2\Phi^{-1}(1-\alpha/2)\bm{I_2}\}$ for the prior $\Lambda$. 
The constrained Bayes optimization problem is the following: For given $\alpha\in (0,1)$, $L$, and $\Lambda$, find the function $M$ minimizing %the Bayes risk
\begin{equation}
 \label{eq:bayesrisk}
\int E_{\bm{\delta}^*}\left[L\left\{M(Z^x_*,Z^y_*,U),\bm{\delta}^*\right\}\right]d\Lambda(\bm{\delta}^*)
\end{equation}
subject to the type~1 error constraint
\begin{equation}
  \label{eq:type:1:constraint}
  \mathrm{Pr}_{\bm{\delta}^*}\{M(Z^x_*,Z^y_*,U)=1\}\leq  \alpha \text{
    for all }
  \bm{\delta}^*\in \mathbb{R}^{2} \text{ such that }
  \delta_x^*\delta_y^* = 0.
\end{equation}
%$\mathrm{Pr}_{\delta_x^*,\delta_y^*}\{M(Z^x_*,Z^y_*,U)=1\}\leq \alpha$ for all $(\delta_x^*,\delta_y^*)\in H_0$.

%To formulate an approximation of this problem, we begin by defining the rejection region outside of $B:= [-b,b]\times[-b,b]$ to be the same as the joint significance test, where we select $b=2\Phi(1-\alpha/2)$. Specifically, define the rejection region outside of this square to be $R_{\bar{B}}:=\{(Z^x_*,Z^y_*):\lvert Z^x_*\rvert\geq \Phi(1-\alpha/2),\lvert Z^y_*\rvert\geq \Phi(1-\alpha/2)\}\setminus B$. 
To    formulate    an
  approximation of this problem, we begin  by making the test coincide outside
  of  $B:= [-b,b]\times[-b,b]$  with  the joint  significance  test, where  we
  select $b=2\Phi^{-1}(1-\alpha/2)$.  
  \begin{comment}
  In other words, we impose $M(Z^x_*,Z^y_*,U) I \{(Z^x_*,Z^y_*)\in \bar{B}\}
      = I \{(Z^x_*,Z^y_*)\in R_{\bar{B}}\}$, where $R_{\bar{B}}
      :=\{(\delta_x^*,\delta_y^*)   \in   \mathbb{R}^{2}   :   \lvert
      \delta_x^*\rvert\geq  \Phi^{-1}(1-\alpha/2),  \lvert  \delta_y^*\rvert\geq
      \Phi^{-1}(1-\alpha/2)\}\setminus B$ 
%  \begin{align*}
%    &M(Z^x_*,Z^y_*,U) I \{(Z^x_*,Z^y_*)\in \bar{B}\}
%      = I \{(Z^x_*,Z^y_*)\in R_{\bar{B}}\}, \quad \text{where}\\
%    &R_{\bar{B}}
%      :=\{(\delta_x^*,\delta_y^*)   \in   \mathbb{R}^{2}   :   \lvert
%      \delta_x^*\rvert\geq  \Phi^{-1}(1-\alpha/2),  \lvert  \delta_y^*\rvert\geq
%      \Phi^{-1}(1-\alpha/2)\}\setminus B.
%  \end{align*}
%The reason for this is that outside of $B$, points $(Z^x_*,Z^y_*)$ are very likely to be drawn from distributions far enough away from the origin that the joint significance test will only be negligibly conservative. 
and $\bar{B}:=\mathbb{R}^2\setminus B$. 
\end{comment}
This     is    because     the    event
  $(Z^x_*,Z^y_*) \not\in  B$ is only likely  under $\bm{\delta}^*$
  far enough  away from the origin  and very unlikely otherwise, so  that the joint
  significance test will only be negligibly conservative. We then discretize the null hypothesis space into a grid of $8K+1$ points and $B$ into a $(2K+1)\times (2K+1)$ grid consisting of squares $R_{k,k'}$, where $K$ is a suitably large integer. %Let $\mathcal{R}$ denote the collection of squares in the latter grid. 
  Within $B$, we define a class %$\mathcal{M}_\mathcal{R}$ 
  of random rejection regions whose realizations are constant within each square. That is, if $(z^x,z^y)\in R_{k,k'}$, then $M(z^x,z^y,u)=M(z^{x'},z^{y'},u)$ for all $(z^{x'},z^{y'})\in R_{k,k'}$ and all $u\in[0,1]$. The rejection probabilities corresponding to each square %$r\in\mathcal{R}$ by $m_r$. These 
  are the unknown arguments defining the rejection region that are to be optimized over. A discretized version of the constrained optimization problem is then defined corresponding to these grids. Collectively, these form a linear optimization problem that approximates the Bayes risk optimization problem. %in $(m_{r})_{r\in\mathcal{R}}$. %, where all other terms are known. 
  Full details %of the discretization and approximation to the optimization problem 
  are available in the Supplementary Materials. 

The plots in
Figure \ref{fig:bayes} show the solutions to the two linear approximation problems
with $\alpha=0.05$ and $K=64$.
\begin{figure}[h]
\centering
\begin{tabular}{ccc}
\includegraphics[width=.47\textwidth]{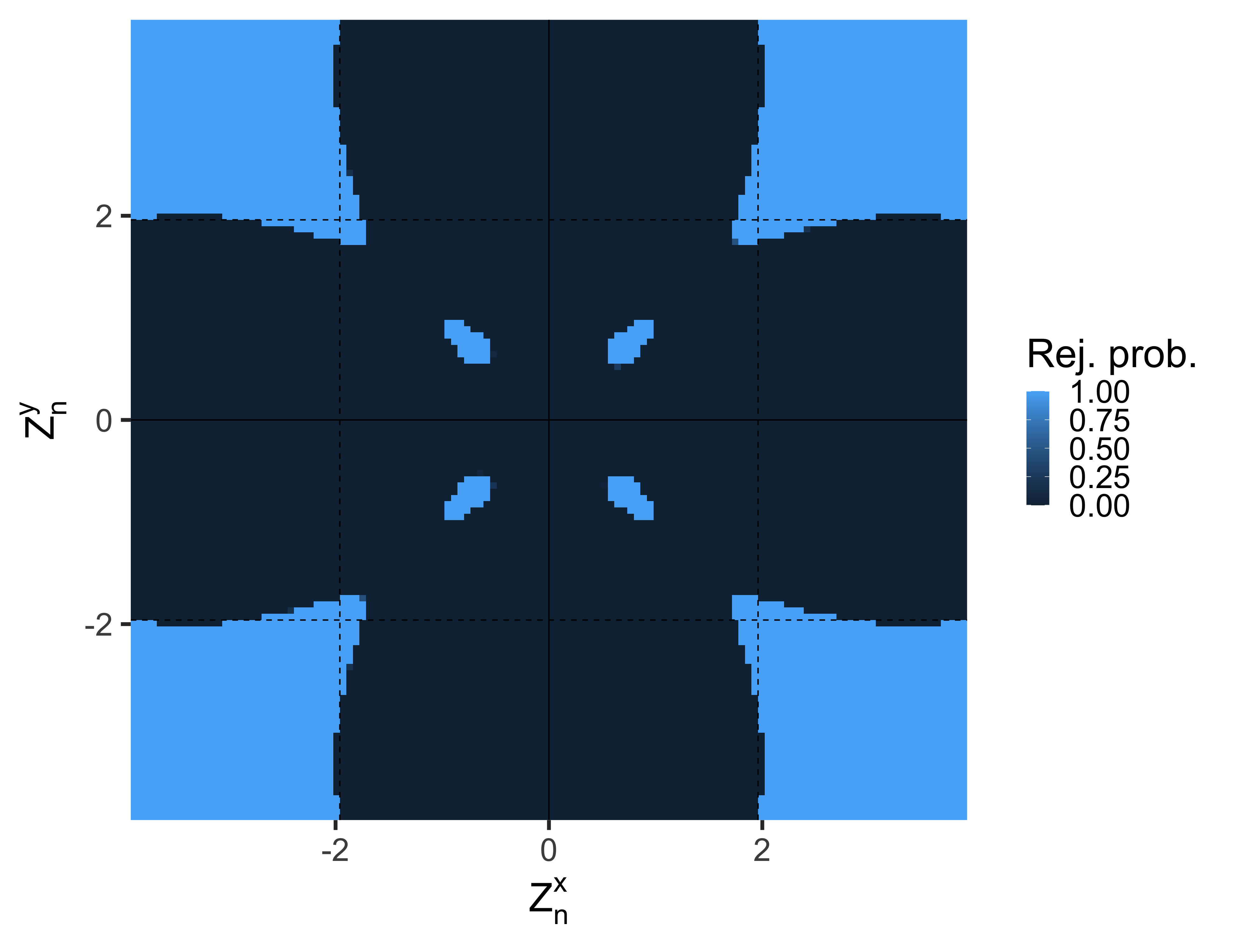}
& 
&
\includegraphics[width=.47\textwidth]{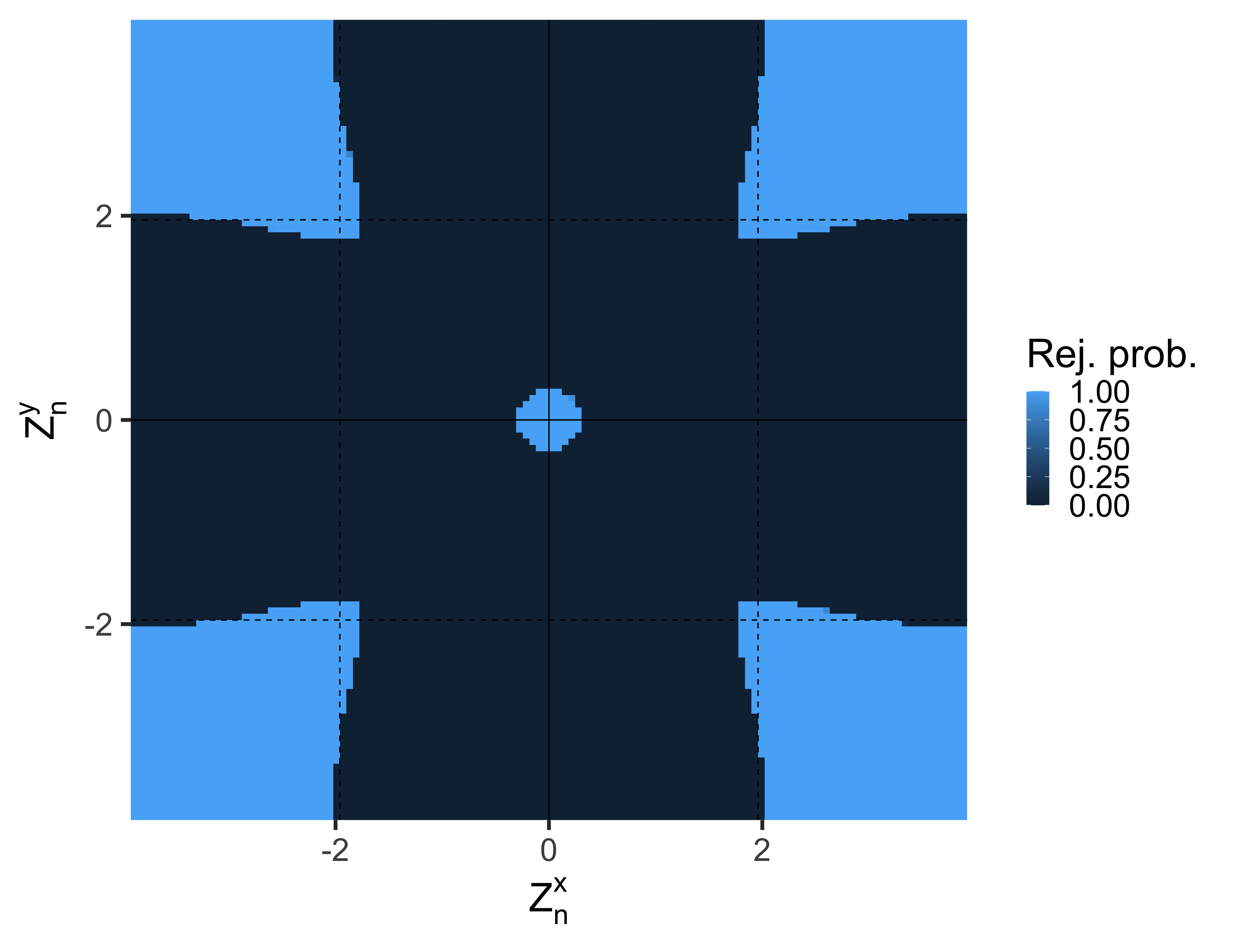}
\\
(a)	&	&	(b)
\end{tabular}
\caption{(a) The rejection probabilities of the approximate Bayes risk optimal test with respect to the (a) 0-1 loss function, and (b) bounded quadratic loss function. The rejection region for each is defined to be equivalent to that of the joint significance test outside of the displayed figure, which is outlined by the continuation of the dashed lines. The dashed lines indicate the boundaries of the rejection regions of the Wald tests for $H_0^x$ and $H_0^y$.}
\label{fig:bayes}
\end{figure}
For both tests, the rejection probabilities are nearly all zero or one. If one prefers a nonrandom test, one can obtain a slightly more conservative test by setting all nondegenerate probabilities to zero. %Although we did not enforce symmetry in this particular implementation of the optimization problem, the plots are nonetheless almost completely symmetrical. 
Both rejection regions almost contain the entire rejection region of the joint significance test. Moreover, both rejection regions take interesting, somewhat unexpected shapes, with four roughly-symmetric disconnected regions inside of the acceptance region in the case of the 0-1 loss, and a single region inside the acceptance region centered around (0,0) in the case of the quadratic loss. As with the minimax optimal test, it seems that it is important to have part of the rejection region along the diagonals at least somewhat close to the origin, so that smaller alternatives in both $\delta_x^*$ and $\delta_y^*$ have larger rejection probabilities to offset the conservativeness induced by lying in the intersection of two acceptance region bands. As with the minimax optimal test, one might reasonably take issue with rejecting test statistics that are arbitrarily close to the null hypothesis space. We discuss this further in Section \ref{sec:interpretability}.

While solving the linear programming problem to produce these tests takes a nontrivial amount of time to run, this need only be done once, after which the object is stored and can be quickly loaded on demand. The approximate Bayes risk optimal tests can be solved for any size $\alpha\in(0,1)$. However, the linear optimization problem must be solved separately for each value of $\alpha$, and so cannot be obtained immediately. %, unless it is a value for which the problem has been previously solved and the solution stored.

As with the minimax optimal test, the approximate optimality of the tests defined in this section is in terms of the limit distribution, i.e., of $(Z_*^x,Z_*^y)$. Assuming uniform convergence of $(\hat{\delta}_x,\hat{\delta}_y)$ as in \eqref{eq:CLT}, the (sample) approximations to the (computational) approximate Bayes risk optimality objective functions and type~1 error constraints can be made arbitrarily small uniformly over the parameter space given a sufficiently large sample size. As with the minimax optimal test, the asymptotic approximation can be improved by replacing all instances of the normal distribution with the $t$ distribution with $(n-1)$ degrees of freedom.

\section{\centering LARGE-SCALE HYPOTHESIS TESTING}
\label{sec:large-scale}
As mentioned above, applying the Bonferroni correction to the proposed methods in order to control for familywise error rate is straightforward even without the use of $p$-values. For $J$ hypotheses, one can simply run any of the proposed tests with level $\alpha/J$ instead of comparing the $p$-value to a threshold of $\alpha/J$. Adapting the Benjamini--Hochberg procedure to control false discovery rate requires a bit more explanation if $p$-values are not available (though the $p$-value proposed in the Supplementary Materials %of Section \ref{sec:pval} 
can be used, which will be conservative). A modified Benjamini--Hochberg procedure works as follows: (i) find the largest value $j$ such that at least $j$ of the $J$ tests reject at level $\alpha j/J$, (ii) reject all hypotheses that the test rejects at level $\alpha j/J$. It can be shown that this procedure controls the false discovery rate at $\alpha$ even for tests that are not monotone in $\alpha$. A (potentially) conservative version of this procedure can avoid an exhaustive search for the largest $j$ in step (i). This procedure starts with $j=1$ and increases $j$ until fewer than $\alpha j/J$ hypotheses are rejected for $\kappa$ consecutive iterations for some pre-specified $\kappa$, after which the largest $j$ in this sequence at which at least $\alpha j/J$ hypotheses are rejected is chosen for part (i).

%first ordering the hypotheses by $p$-value, and then only considering candidate $j$'s if the test rejects at level $\alpha j/J$ for the $j$th hypothesis.

%One important %and often overlooked 
%point 
When testing for the presence of multiple mediated effects, %one either needs to consider distinct exposures as well as mediators across the different tests, or 
    one needs to %be willing to 
    assume that the true mediators do not affect one another, which would result in some mediators being exposure-induced confounders, yielding the NIE non-identifiable. %\citep{avin2005identifiability}. %and that none of the candidate mediators confounds the effect of any of the others. 
    %It suffices to assume that no candidate mediator affects any other candidate mediator. 
    This may not be realistic in many scenarios; however, this problem is beyond the scope of this article.

\section{\centering INTERPRETATION AND MODIFICATIONS}
\label{sec:interpretability}
As we have pointed out, the rejection regions for the minimax and Bayes risk optimal %(which we will henceforth refer to as Bayes risk optimal for brevity) 
tests have some peculiar features that some may regard as undesirable. %or unacceptable. 
Tests with such properties are known to arise in multi-parameter hypothesis testing and have received criticism in \citet{perlman1999emperor} as being flawed, though the philosophical debate about what criteria should be used to compare among statistical tests is perhaps not entirely settled %as can be seen in the comments by 
\citep{berger1999comment}. %,mcdermott1999emperor}. %in response to \citet{perlman1999emperor}. 
%We find both sides of this debate compelling, and 
%Rather than choosing a side on this debate, %in this article, we %instead 
We take a pluralistic stance and provide a menu of options, among which we hope anyone across this ideological spectrum can find a test that suits their preferences. Furthermore, we discuss differences in interpretation of different tests. %In the extreme, we allow that some may not find any of the tests we propose to be acceptable, in which case we would recommend use of the joint significance test. %(which we will see can be recovered as a special case of a class of tests we will propose in this section).

%%% BRILLIANT!!!

Not all of the criticisms of \citet{perlman1999emperor} apply to our tests, but at least two do. First, the minimax optimal test can reject when the point estimate is arbitrarily close to the null. How then is one to interpret a test that rejects when both test statistics are small? One perspective %that one might take 
is that an %scientific 
investigator need not rely entirely on the result of a hypothesis test to determine whether an association or an effect is %meaningful or 
``practically significant'', so long as the hypothesis test has correct type 1 error. Indeed, one could always pre-specify either a minimally meaningful effect size or proportion mediated (the natural indirect effect divided by the total effect), and never reject if the point estimate does not exceed it. 
%\citet{perlman1999emperor} argue that the relevant criterion for hypothesis testing is the following: ``based on the observed data, does the family of distributions represented by the alternative hypothesis $H_1$ fit (or support, or explain) the observed data significantly better than the family represented by the null hypothesis $H_0$''. One issue with this criterion, as pointed out by \citet{berger1999comment} is that this lacks a precise definition. 

Second, the minimax optimal test is not monotonic in $\alpha$, nor is the Bayes risk optimal test guaranteed to be. Thus, in the large-scale hypothesis testing case, it is possible for the Bonferroni- or BH-adjusted minimax optimal test to reject based on a particular test statistic when there are, say, $J$ hypotheses to test, but not when there are $(J-1)$. While this is true for a fixed test statistic, the expected number of rejected nulls will always be lower with a smaller number of hypotheses, and FWER or FDR are preserved for any number of hypotheses. In the context of large-scale hypothesis testing, one might care less about the interpretation of the rejection of a particular hypothesis, and more about minimizing false positives, as rejected nulls may be viewed as being flagged for further scrutiny in subsequent analyses. However, these are not the only options. In the Supplementary Materials, %Section \ref{sec:pval}, 
we introduce a $p$-value corresponding to the minimax optimal test, which %does not correspond one-to-one with the minimax optimal test. In fact, the $p$-value 
can itself be used to define a test that rejects at level $\alpha$ whenever the $p$-value is less than $\alpha$. Such a test will be monotone in $\alpha$ by construction. %However, without further modification, the $p$-value based test can reject test statistics that are arbitrarily small.
A constrained variant of the Bayes risk optimal test can be devised for a given decreasing sequence of $\alpha$'s by setting the rejection value to be no less %(greater) 
than the largest %(smallest) 
rejection probability among all tests with larger %(smaller) 
$\alpha$ values preceding it in the sequence. The same realization of the randomization variable 
$U$ ought to be used across tests to preserve monotonicity. Therefore, if one is concerned about coherence, then one might opt to use this $p$-value based test at the cost of power, but which is still more powerful than traditional tests while preserving FWER or FDR, or alternatively this constrained version of the Bayes risk optimal test.%Such a procedure will likely be sensitive to the sequence of $\alpha$ values. %, but will be 

To circumvent the first issue, we propose modifications of the minimax optimal and Bayes risk optimal tests. For the minimax optimal test, one can simply truncate the rejection region according to any user-specified distance, say $d$, from the null hypothesis space. If one wishes to preserve monotonicity in $\alpha$, then one can do the same for the $p$-value-based test we have just discussed. For a test with rejection region $R$, the rejection region of the corresponding truncated version will simply be $R \setminus\{(Z_n^x, Z_n^y)^{\top}: \lvert Z_n^x\rvert \wedge \lvert Z_n^y\rvert \leq d\}$. The rejection region of the truncated minimax optimal test with $d=0.1$ is shown in panel (a) of Figure \ref{fig:trunc}, %Of course, any cutoff can be chosen; we simply illustrate with 0.1 for
where we chose 0.1 for the sake of comparison with %because this is the distance of the rejection region of 
\citet{van2024nearly}. %from the null hypothesis space.
\begin{figure}[h]
\centering
\begin{tabular}{ccc}
\includegraphics[width=.45\textwidth]{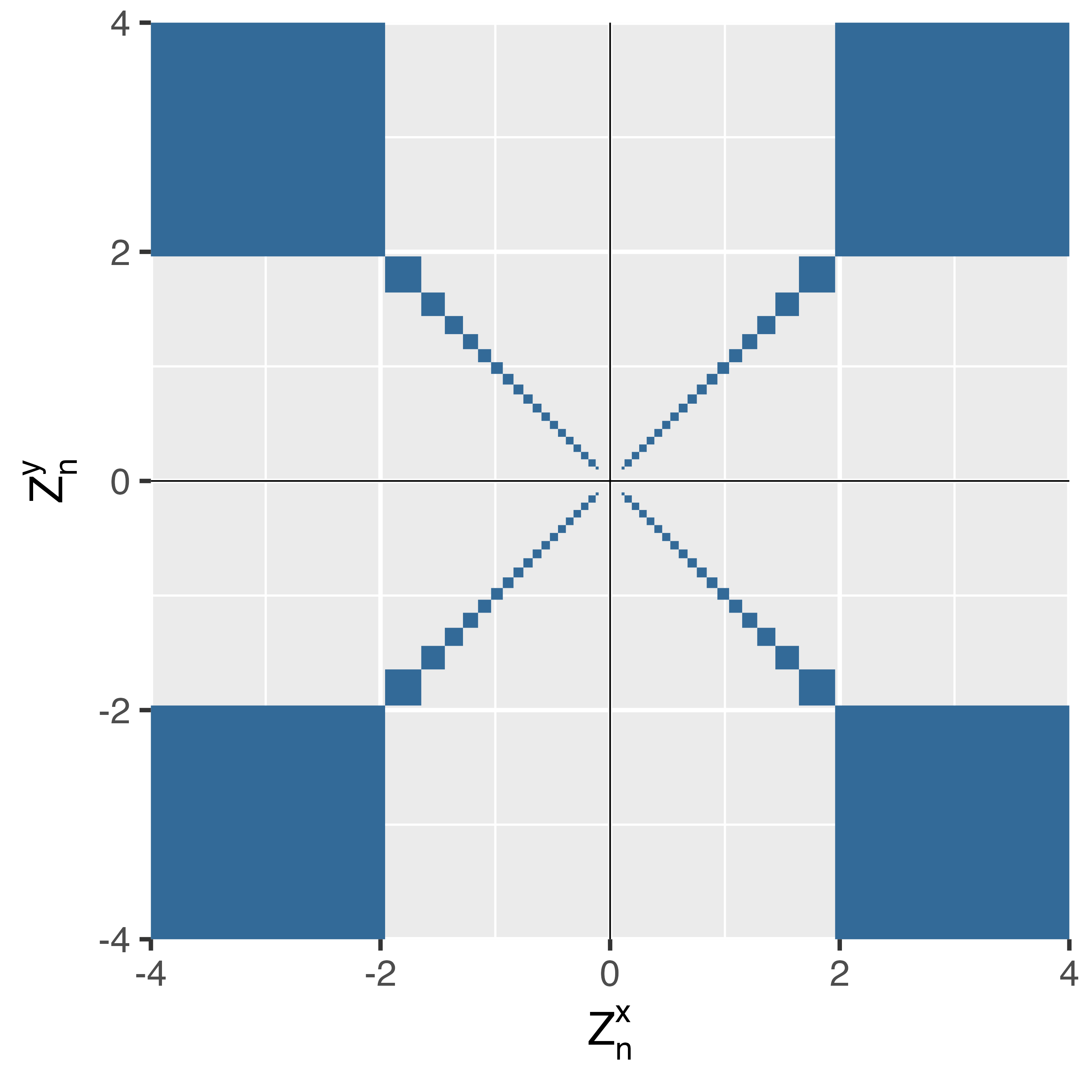}
& 
&
\includegraphics[width=.45\textwidth]{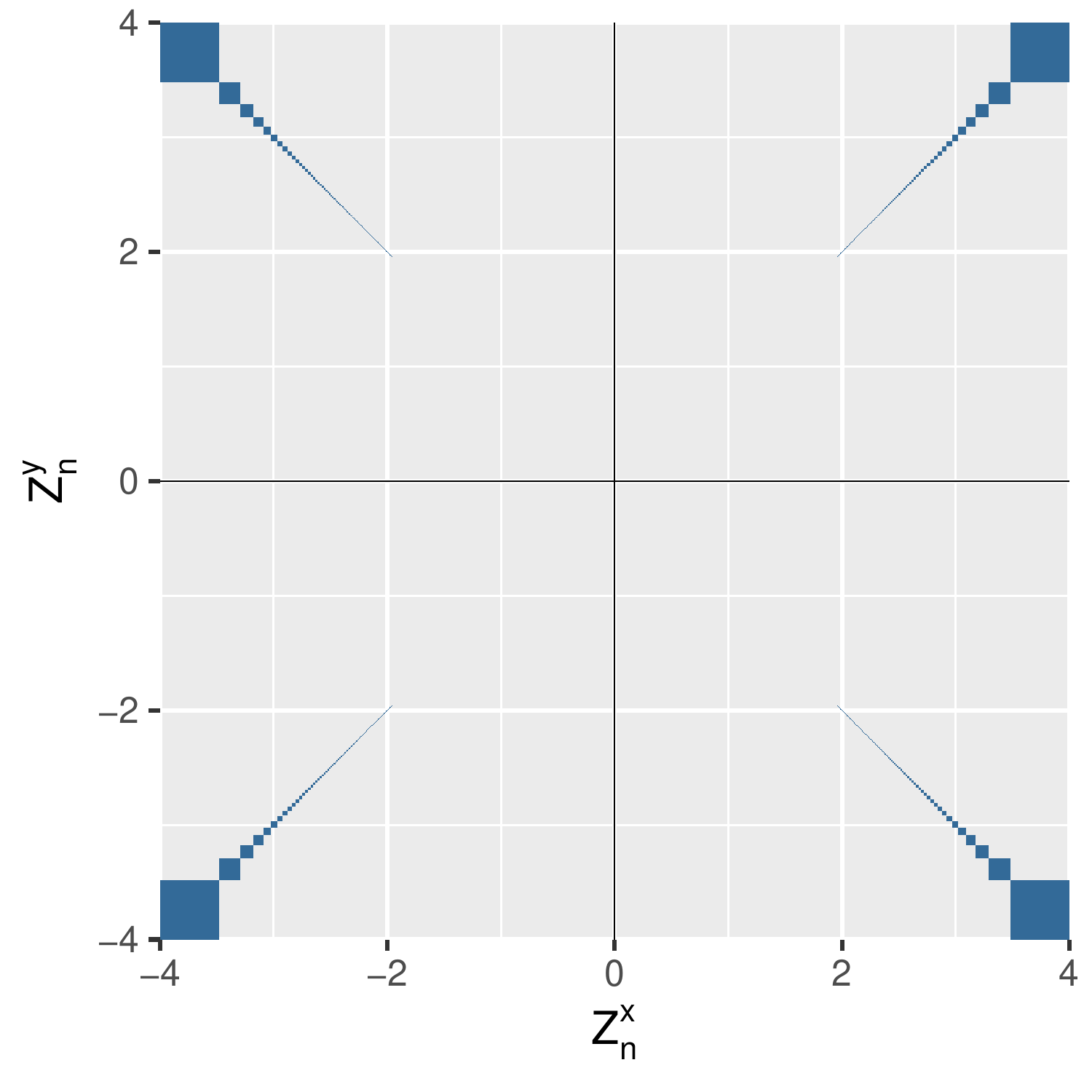}
\\
(a)	&	&	(b)
\end{tabular}
\caption{(a) The rejection region of the truncated minimax optimal test with $d=0.1$ and $\alpha=0.05$. (b) The rejection region of the truncated minimax optimal test with $d=\Phi^{-1}(0.975)$ and $\alpha=0.05/100$, as one might use for a Bonferroni correction when testing 100 mediation hypotheses.}
%Large-scale hypothesis test rejection region with Bonferroni correction for 100 tests with a joint significance test filter, i.e., truncating the rejection region by 1.96.}
\label{fig:trunc}
\end{figure}
Such truncated tests will clearly be dominated by their corresponding non-truncated versions in terms of power. A more powerful approach would be to instead optimize a constrained version of the minimax optimality criterion, where the constraint is that the test cannot reject for test statistics such that $\lvert Z_n^x\rvert \wedge \lvert Z_n^y\rvert \leq d$. This is a challenging problem that we do not address here. %, but that could be a fruitful direction for future research. 
One rebuttal to the claim of \cite{perlman1999emperor} that the rejection region ought to be bounded away from the null hypothesis space is that this distance is not specified, nor is there a clearly-defined criterion specifying what this distance should be \citep{berger1999comment}. If one believes that this distance should be determined according to the likelihood ratio test, then one would set $d$ to be $\Phi^{-1}(1-\alpha/2)$, in which case the joint significance test, i.e., the likelihood ratio test \citep{liu2022large,van2024nearly}, would be recovered in the single hypothesis test case. %, and our test offers nothing novel. 
However, if one is conducting large-scale hypothesis testing and wishes to control for familywise error rate (for instance), and insists on a $d$ of $\Phi^{-1}(1-\alpha/2)$, then this does indeed generate a novel test with greatly-improved power over the Bonferroni-corrected joint significance test. For example, for a Bonferroni correction for 100 tests at level $\alpha=0.05$, one can use the truncated minimax optimal test with $\alpha$ level $0.05/100$ and with $d=\Phi^{-1}(1-\alpha/2)$, which yields the rejection region in panel (b) of Figure \ref{fig:trunc}. That is, one can apply the multiple testing-adjusted minimax optimal test with level $0.0005$, and then use the joint significance test with $\alpha$ level $0.05$ to screen out effects deemed too small. %In this case, the Bonferroni-corrected minimax optimal 
The resulting test will have 100 times the worst-case power per test of that of the Bonferroni-corrected joint significance test, while still preserving familywise error rate and respecting the specified bound. %away from the null hypothesis space.

For the Bayes risk optimal tests, the same truncation could be easily applied. However, unlike the minimax optimal test, we also formulate a constrained version of the Bayes risk optimization problem in this article. For $d=0.1$, the Bayes risk optimal test with 0-1 loss function will not change, since there is no part of the rejection region within 0.1 of the null hypothesis space. On the other hand, the Bayes risk optimal test with quadratic loss will change, since its rejection region contains a pixelated circle centered on the origin. The rejection region of the latter with %constrained Bayes risk optimal test with quadratic loss and 
$d=0.1$ is shown in the supplementary materials. %Figure \ref{fig:quad_constr}. %Interestingly, there remains a rough circle centered on the origin; however, now there is a cross cut out of the center, respecting our constraint. 
%Once again, one may prefer to choose other values of $d$.%---0.1 is merely chosen for illustration.
\begin{comment}
\begin{figure}[h]
\centering
 \includegraphics[width=.45\textwidth]{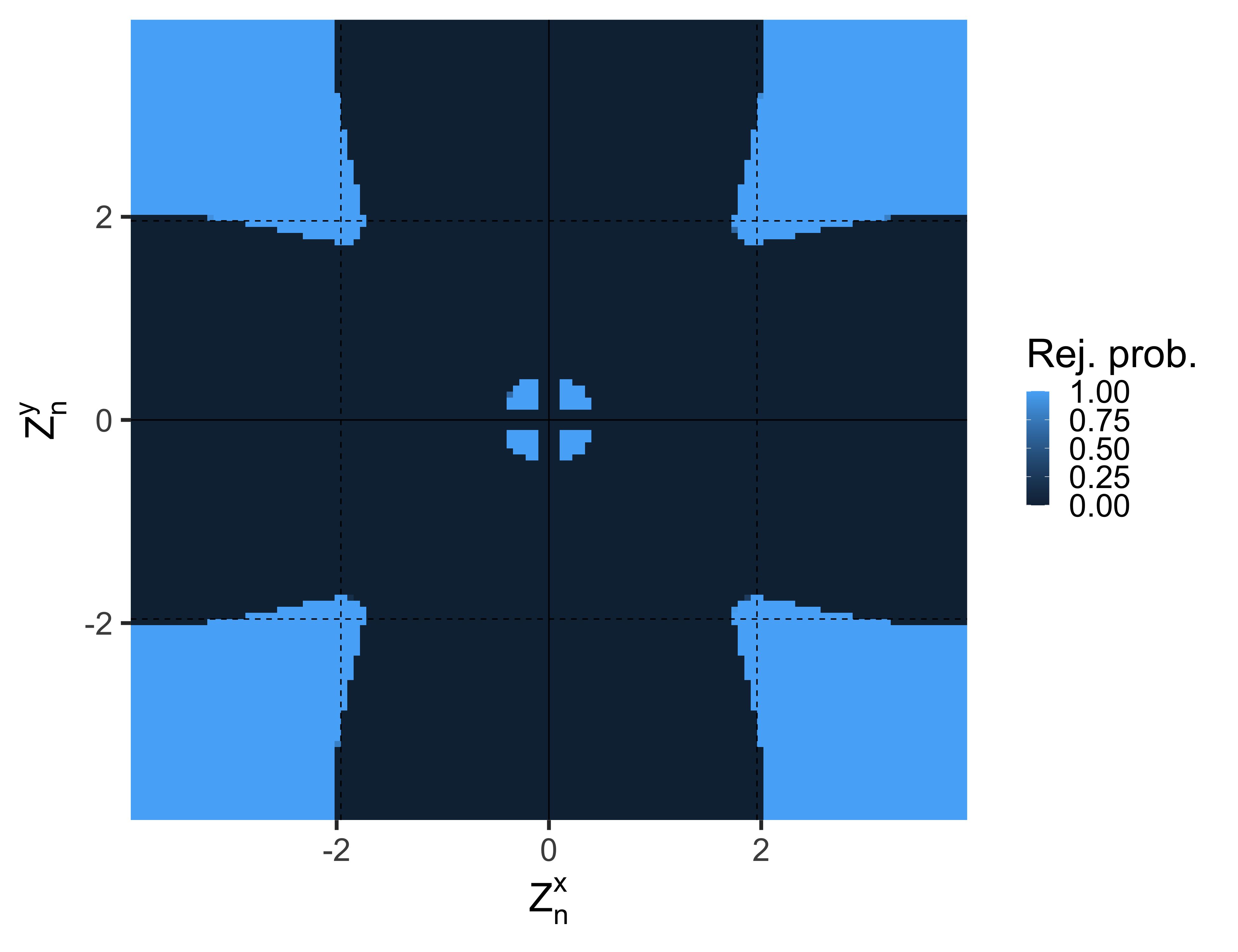} 
 \caption{The rejection region of the constrained Bayes risk optimal test with quadratic loss function and $d=0.1$.}
 \label{fig:quad_constr}
\end{figure}
\end{comment}

Another important challenge has to do with the traditional interpretation of a hypothesis test rejection decision: if the null hypothesis were true, then the probability of observing an event as or more extreme than the observed event is at most $\alpha$. In multi-parameter hypothesis testing settings, this interpretation can be ambiguous, as there is not a total ordering in the parameter space. Nevertheless, in the product of coefficients case, such an interpretation can be applied to a partial ordering, where one pair of test statistics is ``as or more extreme'' than another if both elements in the first pair are at least as large in magnitude as those in the second. \cite{van2022optimality} refer to a test that rejects for all pairs of test statistics that are more extreme %(in this sense) 
than another pair for which it also rejects as ``information coherent''. Clearly none of the tests we have proposed nor the test of \cite{van2024nearly} satisfy this property, hence these tests lack this traditional interpretation. %of a rejection decision. 
Instead, they admit a weaker interpretation: if the null hypothesis were true, then the observed event would have been unlikely to occur in the sense that its rejection probability would be at most $\alpha$. Alternatively, if the joint significance test and one of our proposed tests both reject, then the stronger interpretation can be applied. In a case where they disagree and one is applying one of our proposed tests, then one must be prepared to accept the possibility that one pair of test statistics that are smaller in magnitude may reject while another pair does not. %this is entirely consistent with this weaker interpretation. 

\cite{van2022optimality} showed that the joint significance test is the most powerful test in the class of information coherent tests. Thus, there is a fundamental trade-off between gaining power over the joint significance test and preserving the former stronger interpretation over the latter weaker interpretation. %If one wishes to improve on the power of the joint significance test, then one has no choice but to construct a test lacking the property of information coherence. %There is a fundamental trade-off between power and this particular notion of interpretability. 
If one deems the underpoweredness of the joint significance test to be an important problem, then one has no choice but to sacrifice a degree of interpretability. In proposing more powerful tests, %we do not intend to force a decision upon anyone about how to navigate this trade-off. Rather, 
we %merely 
wish to present options to gain power at the cost of some interpretability for those who would deem it a worthwhile trade-off. %Ultimately, it is up to the user to make this decision.
One may reasonably feel uncomfortable making such a trade-off when scientific conclusions or policy decisions are to be based on the result. However, one may feel more comfortable with this trade-off in large-scale hypothesis testing scenarios where the focus is on making multiple discoveries while controlling the number of errors. Thus, our proposed methodology is perhaps best suited for this latter setting.

\section{\centering SIMULATION STUDIES}
\label{sec:simulations}
\subsection{Single mediation hypothesis testing}
To compare the finite sample performances of our proposed tests %the proposed minimax optimal and Bayes risk optimal tests 
with those of the others we have discussed, %the delta method, joint significance, and \cite{van2024nearly} (henceforth vGvG) tests, 
we performed a simulation study in which we sampled from independent $t$ distributions with %identity covariance matrix and 
noncentrality parameters $\bm{\delta}$. We considered four scenarios: (a) We varied $\delta_x$ and $\delta_y$ jointly from 0 to 0.4. (b) We fixed $\delta_y$ to 0.2 and varied $\delta_x$ from 0 to 0.4. (c) Using a normal approximation, we fixed $\delta_y$ to 0 and varied $\delta_x$ from 0 to 0.4, thereby exploring empirical type 1 error across a range of parameter values within the composite null hypothesis space. (d) We used the same setting as in (c), but using a $t$-distribution with $(n-1)$ degrees of freedom approximation instead of the normal approximation. We drew 100,000 Monte Carlo samples with %of sample size 
$n=50$ for each value of $\delta_x$, and applied each test of $H_0:``\delta_x\delta_y=0"$ against $H_1:``\delta_x\delta_y\neq 0"$ to each sample. In particular, we used three versions of the Bayes risk optimal test: the tests with either the 0-1 loss or quadratic loss function, and the constrained test with quadratic loss function. Likewise, we used three version of the minimax optimal test: the standard version, the test based on the corresponding $p$-value defined in the Supplementary Materials, %Section \ref{sec:pval}, 
and a truncated minimax optimal test. In both the constrained and truncated tests, we used $d=0.1$ in order to make a fair comparison with the \cite{van2024nearly} (henceforth vGvG) test. %, whose rejection region is 0.1 away from the null hypothesis space. %in Euclidean distance. 
%In all cases, we used the normal approximation, and in the null case in which $\delta_y$ is fixed at zero, we also used tests based on the $t$-distribution approximation. 
%For the vGvG test, we used a quantile-quantile transformation of their boundary function in order to produce the $t$-distribution-based version. 
The Monte Carlo power estimates are displayed in the plots in Figure \ref{fig:sims}.
\begin{comment}
\label{sec:simulations}
\begin{figure}[h]
\centering
 \includegraphics[width=.8\textwidth]{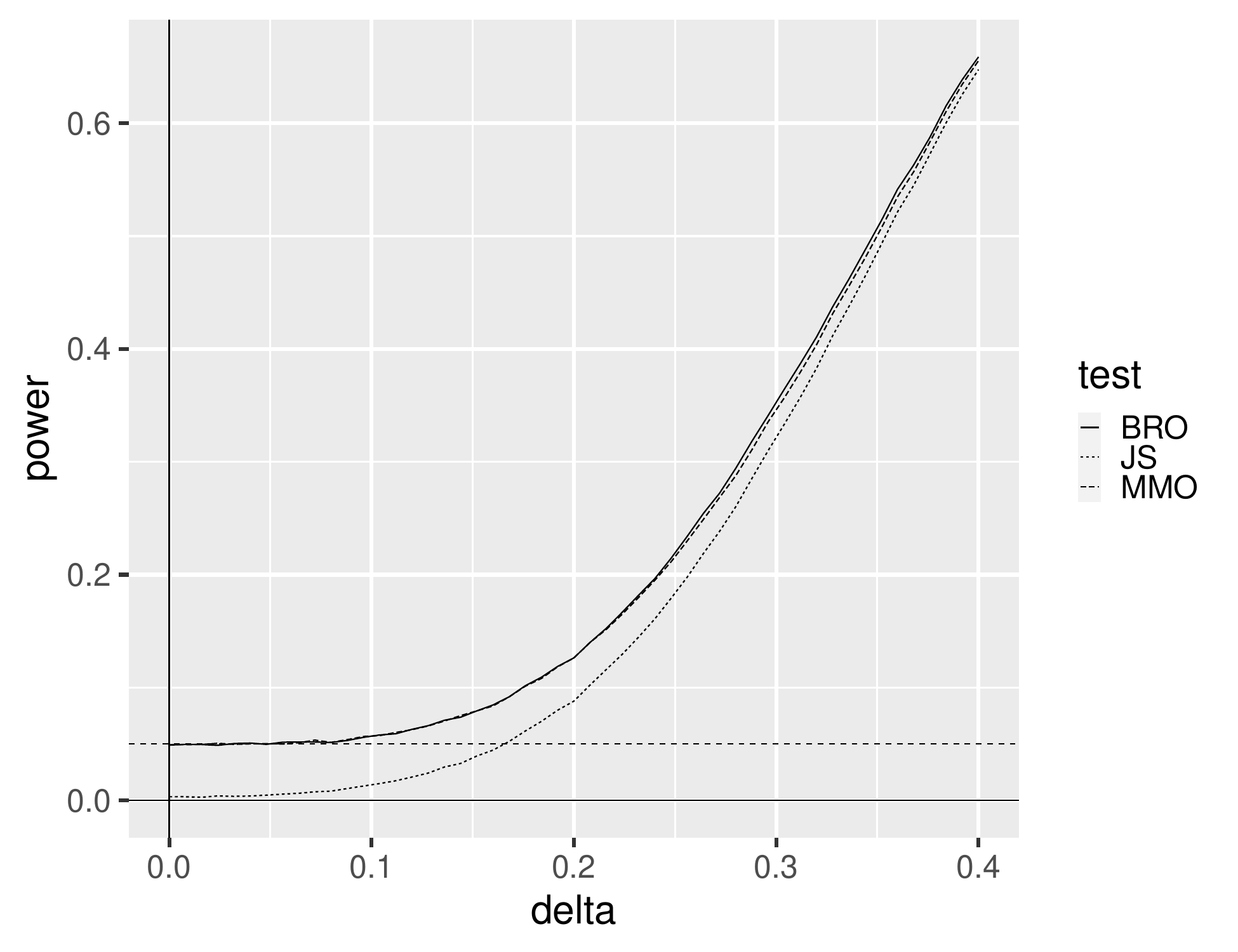} 
 \caption{Monte Carlo rejection probabilities of the minimax optimal (MMO), Bayes risk optimal (BRO), and joint significance (JS) tests with $\delta=\delta_x=\delta_y$ varying from 0 to 0.4. The nominal $\alpha$ level 0.05 is indicated by the dashed horizontal line.}
 \label{fig:sims}
\end{figure}
\end{comment}
\begin{figure}%[H]
\centering
\begin{tabular}{ccc}
\includegraphics[width=.47\textwidth]{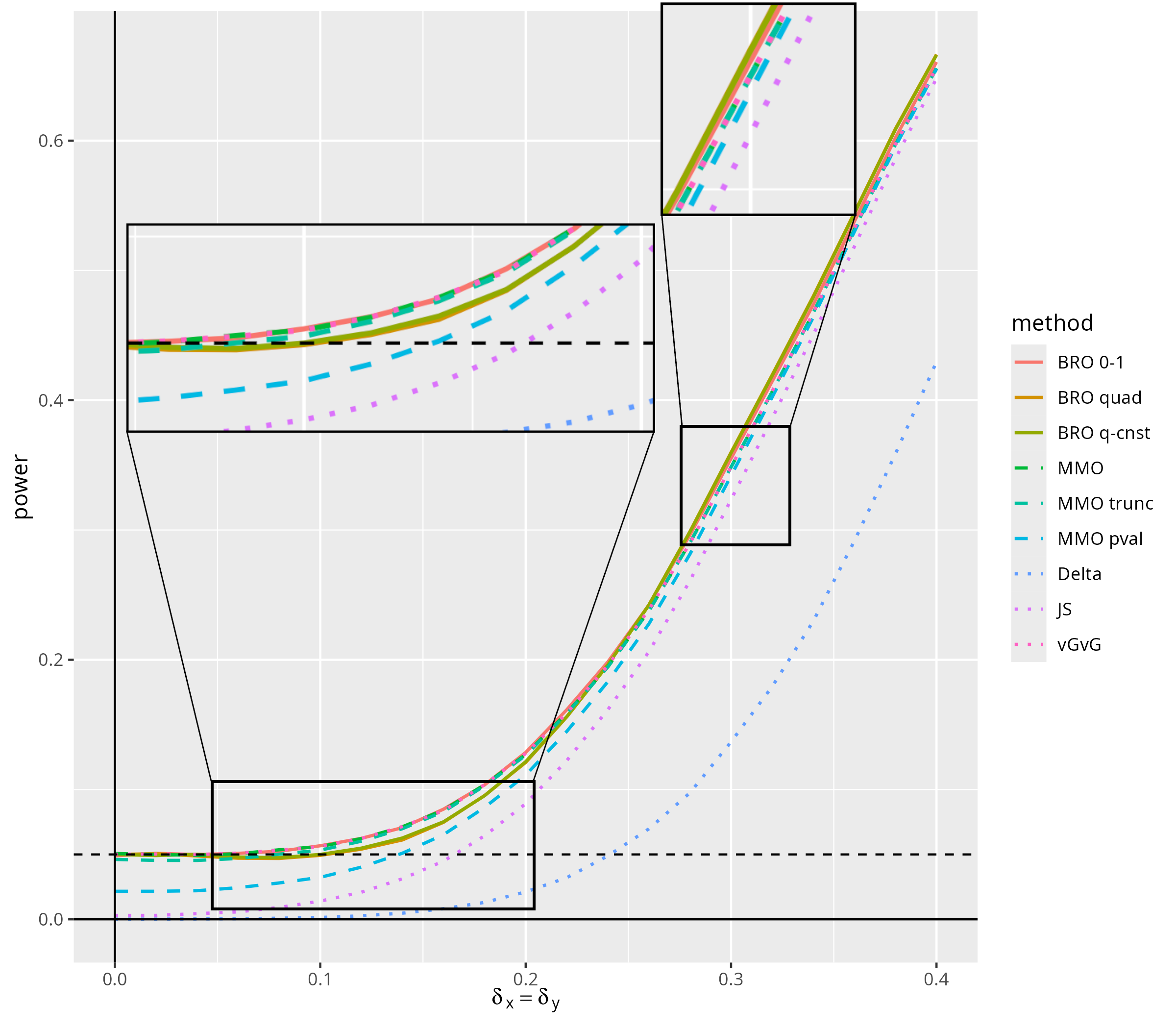}%{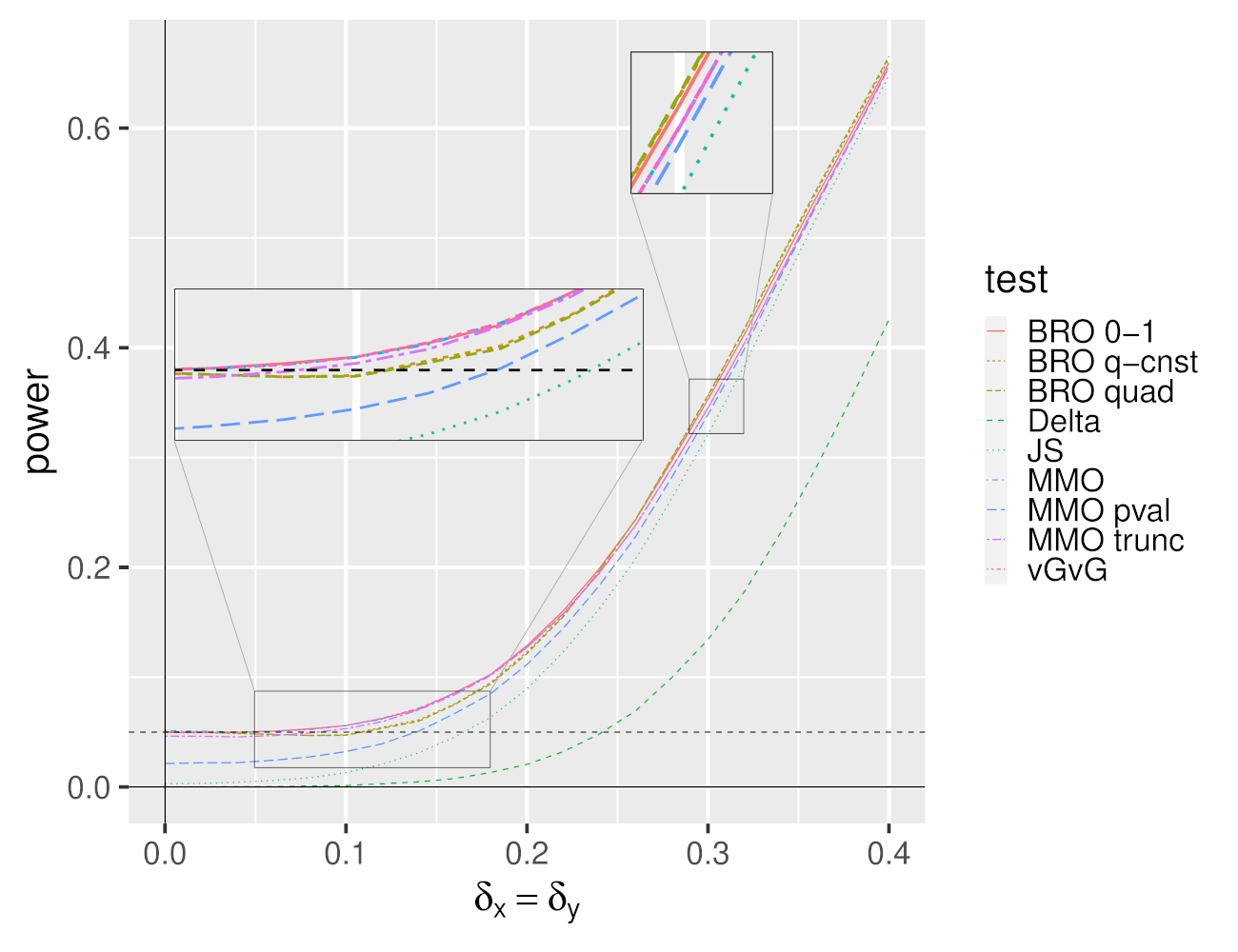}
& 
&
\includegraphics[width=.47\textwidth]{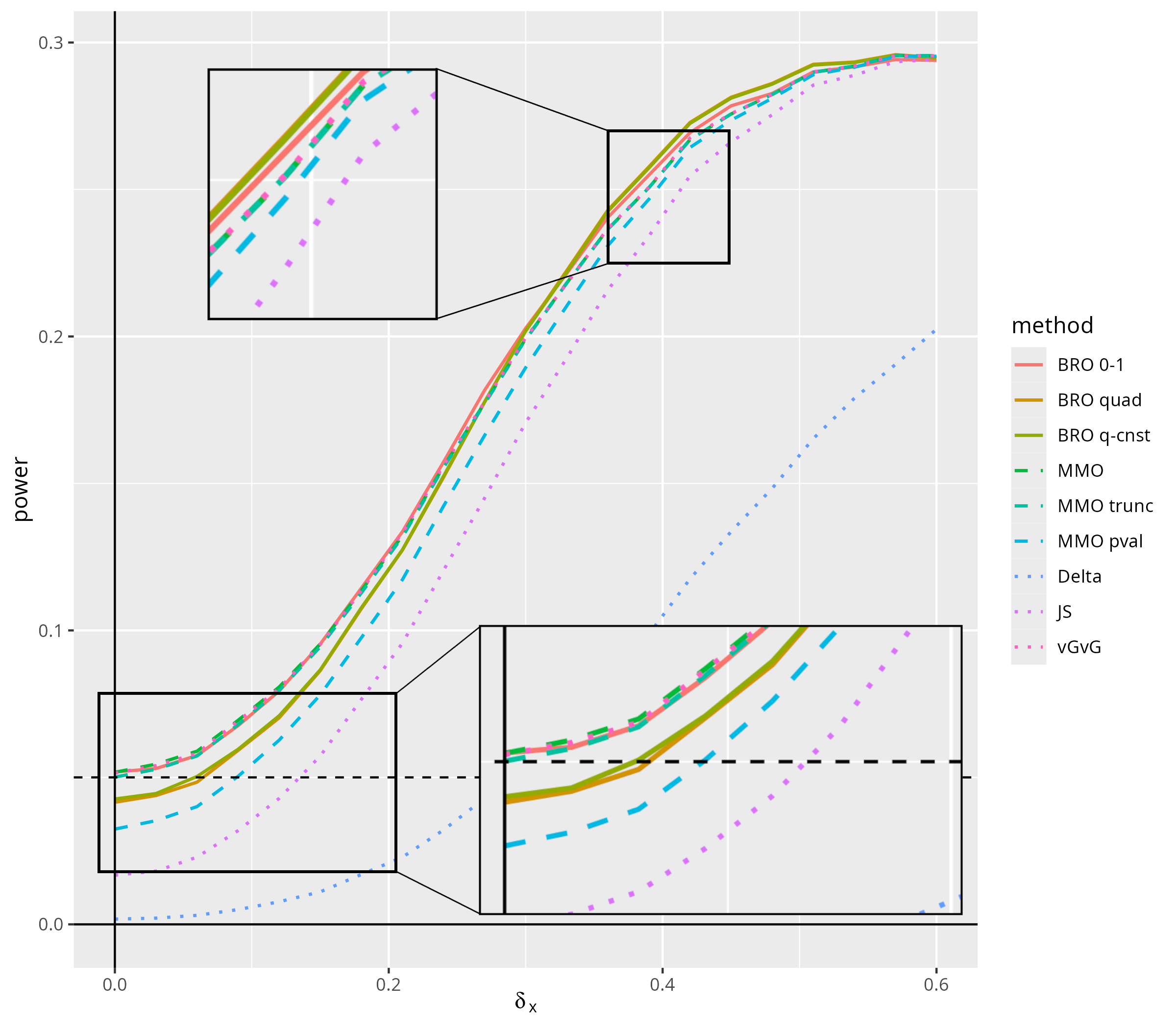}%{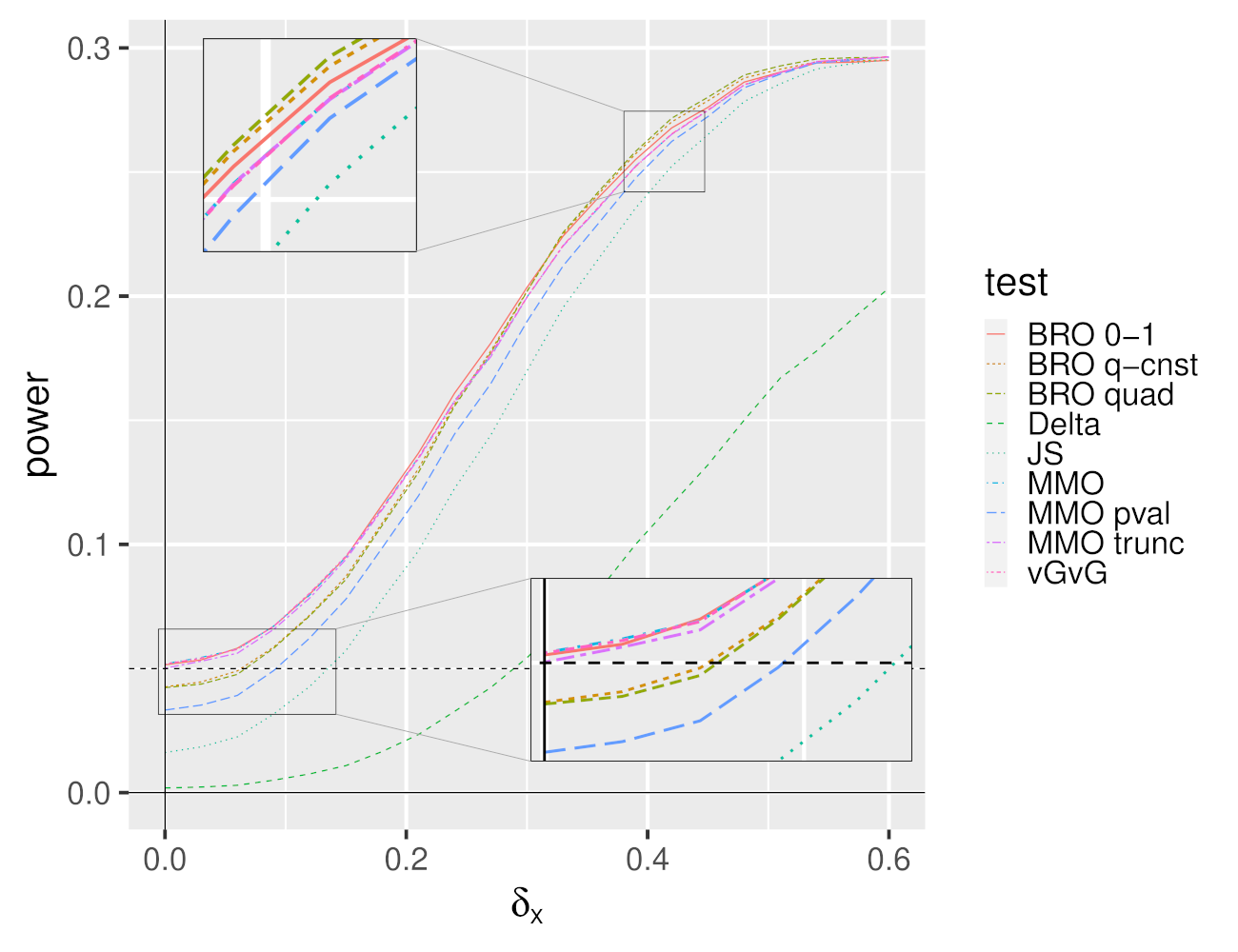}
\\
(a)	&	&	(b) \\
&& \\
\includegraphics[width=.47\textwidth]{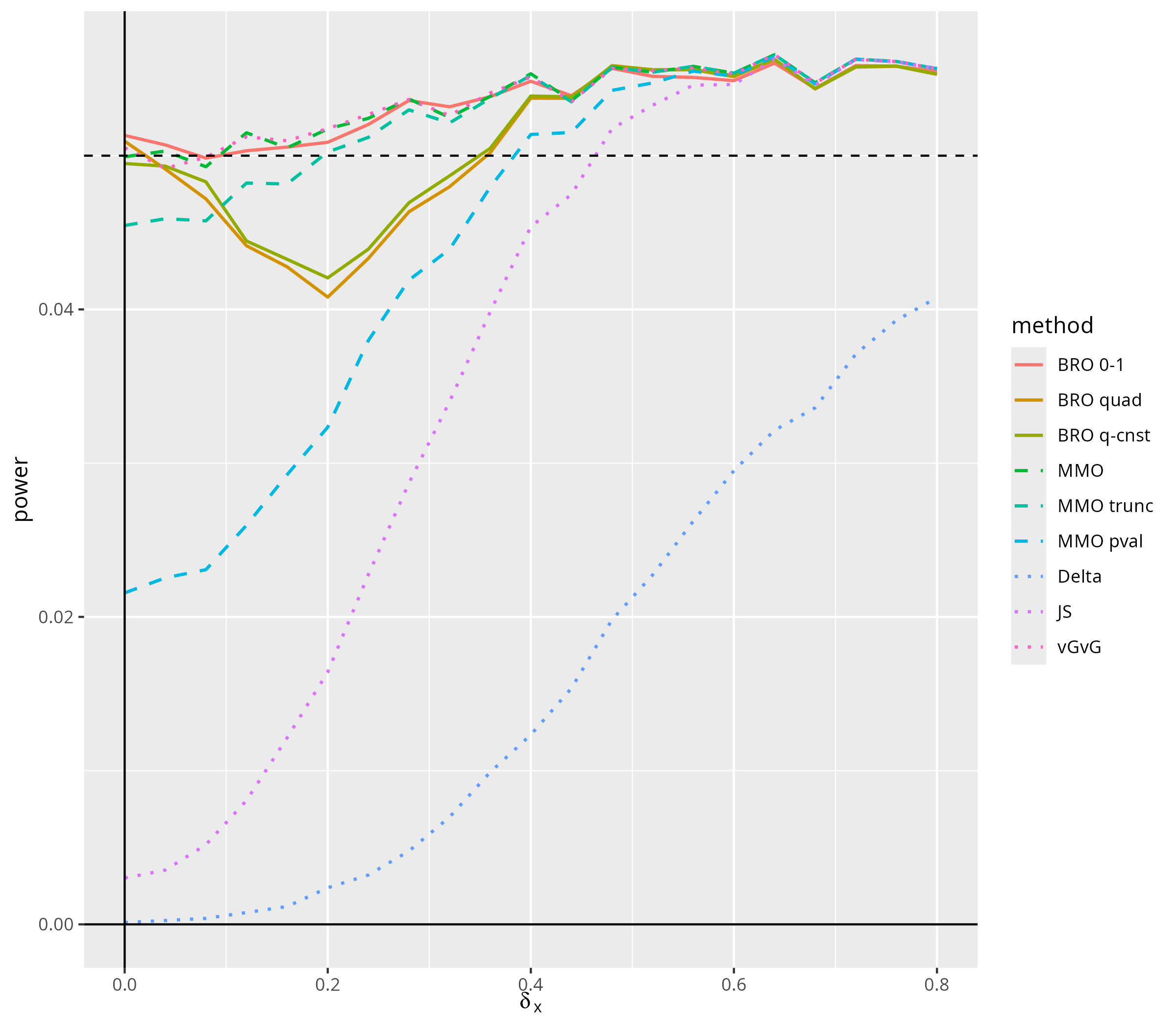}%{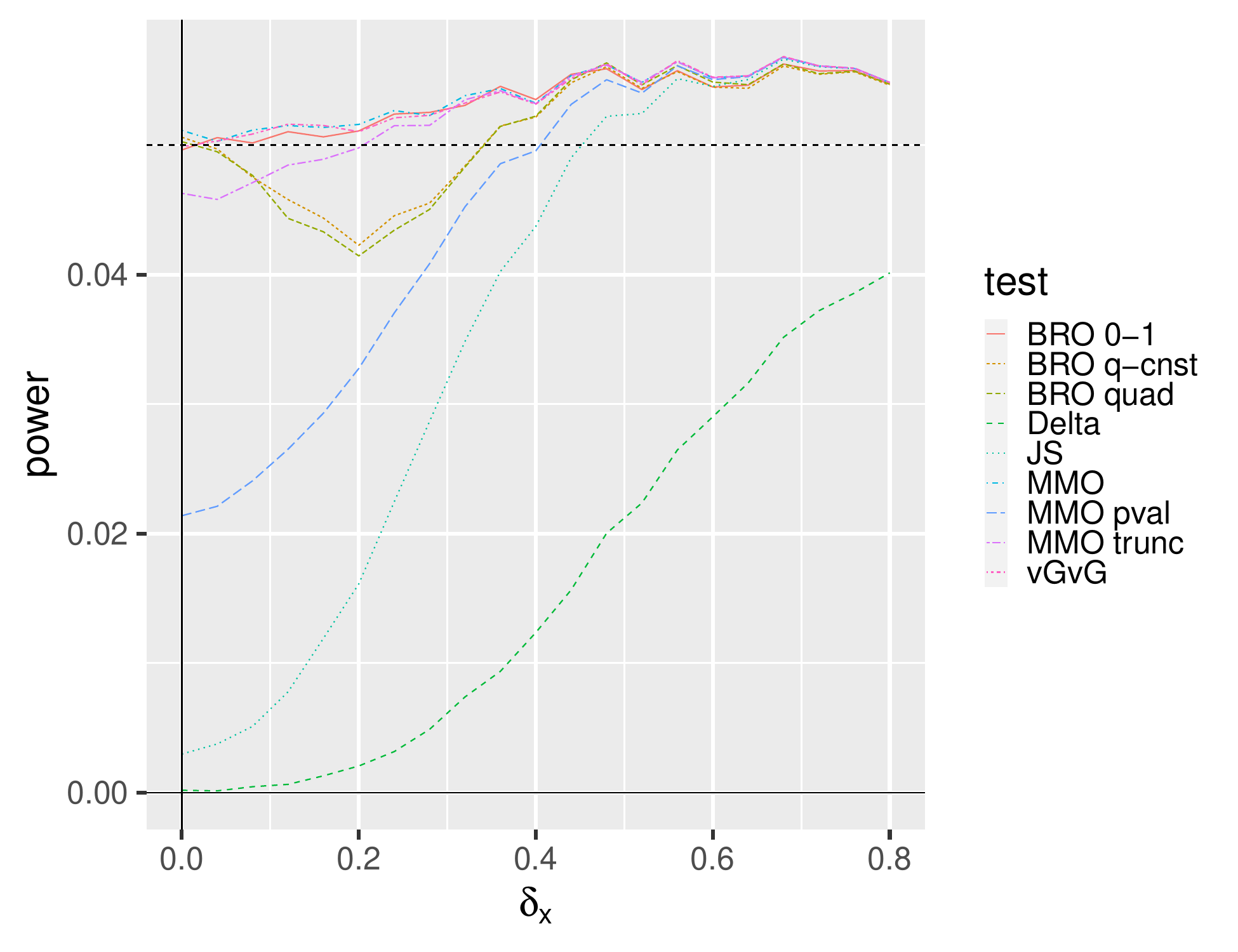}
& 
&
\includegraphics[width=.47\textwidth]{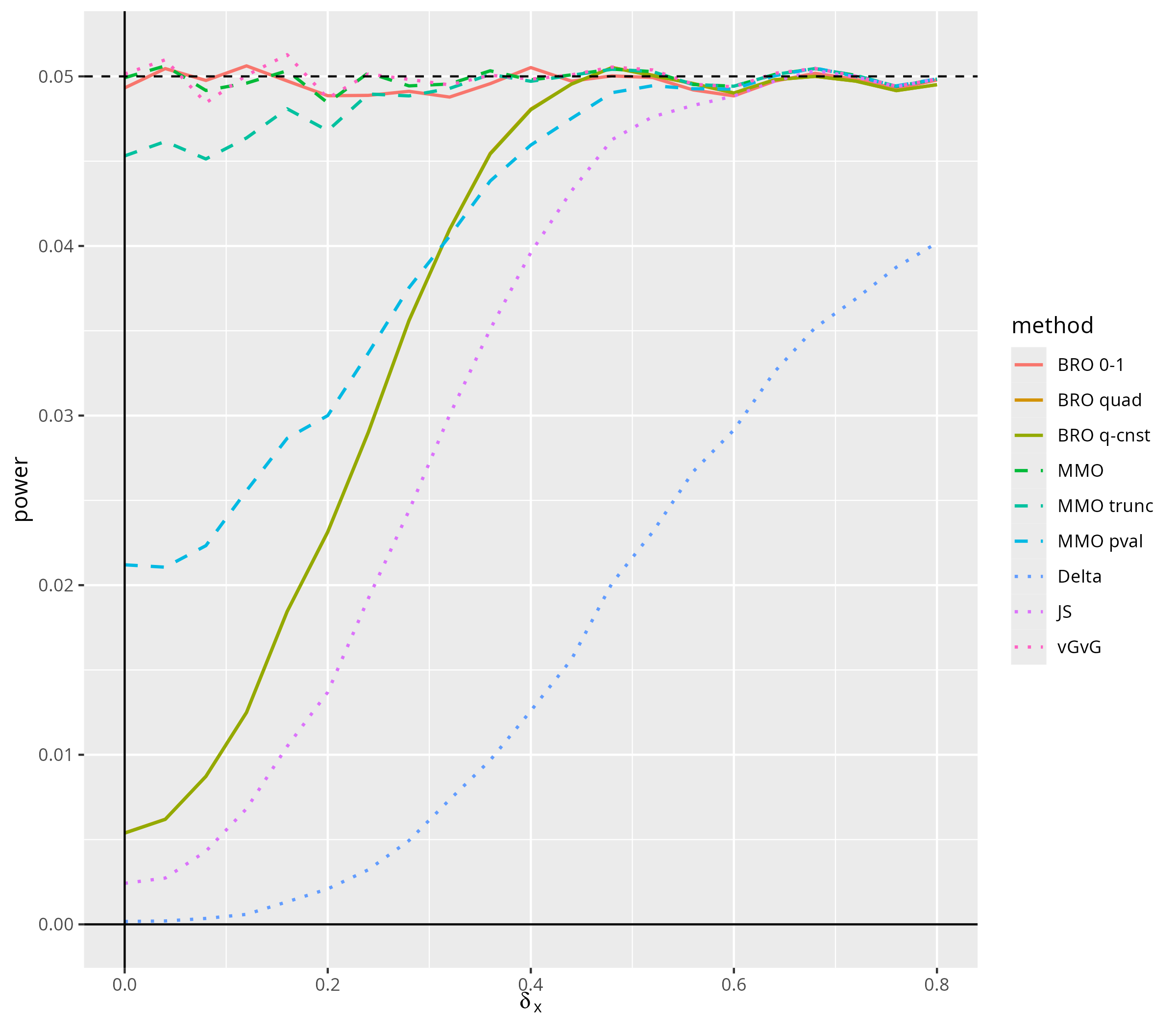}%{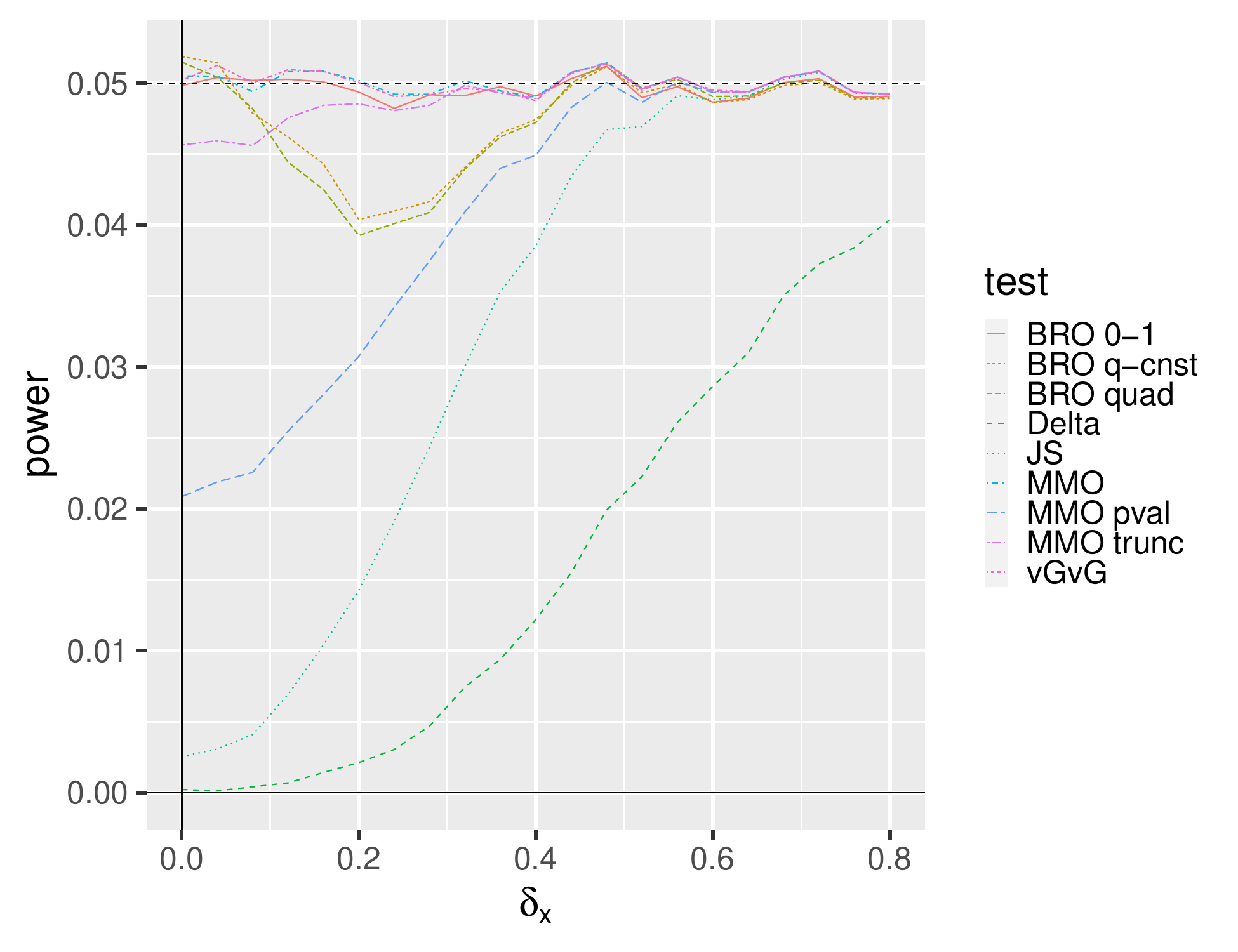}
\\
(c)	&	&	(d) \\
\end{tabular}
\caption{Monte Carlo rejection probabilities of the following: Bayes risk optimal with 0-1 loss (BRO 0-1), Bayes risk optimal with quadratic loss (BRO quad), constrained Bayes risk optimal with quadratic loss (BRO q-cnst), delta method (Delta), joint significance (JS), minimax optimal (MMO), $p$-value-based test of the minimax optimal test (MMO pval), truncated minimax optimal (MMO trunc), and vGvG. %The nominal $\alpha$ level 0.05 is indicated by the dashed horizontal line. 
(a) $\delta_x=\delta_y$ varying from 0 to 0.4. (b) $\delta_y$ fixed at 0.2, $\delta_x$ from 0 to 0.6. (c) $\delta_y$ fixed at 0, $\delta_x$ from 0 to 0.8 with normality-based tests. (d) $\delta_y$ fixed at 0, $\delta_x$ from 0 to 0.8 with $t$-distribution-based tests. In panel d, the BRO quad and BRO q-cnst lines are identical.} 
\label{fig:sims}
\end{figure}
In scenario (a), apart from the minimax optimal $p$-value-based test, all of the minimax optimal and Bayes risk optimal tests and the vGvG test have very close to nominal type~1 error at $\bm{\delta}=(0,0)^{\top}$. %in all cases. 
All of these tests greatly outperform the delta method and joint significance tests in terms of power for smaller values of $\delta_x=\delta_y$, with the minimax optimal $p$-value-based test having power somewhere in between, but converging more quickly to the other tests. %than the joint significance test. 
The delta method test remains very conservative over the entire range of parameter values. %All of the tests except for the delta method test begin to converge in power closer to 0.4. 
The minimax optimal (apart from the $p$-value based test), Bayes risk optimal, and vGvG tests perform very similarly over the range of parameter values, with the quadratic loss versions of the Bayes risk optimal test trading off some power loss in the 0.1--0.2 range for a slight improvement in power in the 0.3--0.4 range, demonstrating the greater emphasis the quadratic loss places on larger alternatives. The truncated minimax optimal test begins slightly conservative relative to 0.05, but catches up with the most powerful tests quickly. 
%, with the former having a very slight advantage (at most about 0.008) for values of $\delta_x=\delta_y$ greater than about 0.25, and the latter having an even slighter advantage (at most 0.002) for most smaller values of $\delta_x=\delta_y$. 
Despite not being similar tests, the Bayes risk optimal and vGvG tests suffer very little in terms of power near the least favorable distribution at $\bm{\delta}=(0,0)^{\top}$.

In scenario (b), the trends are largely the same as in scenario (a). The main difference is that the quadratic loss versions of the Bayes risk optimal test are slightly conservative %with respect to 0.05 
under the null, %i.e., when $(\delta_x,\delta_y)=(0,0.2)$, 
albeit less conservative than the minimax optimal $p$-value-based test. These tests trade off some power loss for smaller values of $\delta_x$ ($\lesssim$0.25) for slightly improved power for larger values of $\delta_x$ ($\approx$0.35--0.55). Since $\delta_y$ is fixed at 0.2, power for all tests appears to plateau a little under 0.3. In scenarios (c) and (d), we see that all tests approximately preserve type 1 error; however, there is some anti-conservative behavior in scenario (c) due to the normal approximation being a bit too concentrated for a sample size of 50. This is corrected with the use of the $t$-distribution approximation, which yields type 1 error much closer to 0.05 across the %entire range of the
null hypothesis parameter space. %we consider. 
In both cases, we see the performances of all tests except the delta method to converge by around $\delta_x=0.55$. %, due to the identical shape of the rejection region for large values of $Z_n^x$.

\subsection{Large-scale mediation hypothesis testing}
\cite{du2023methods} ran a large simulation study comparing a number of methods for performing large-scale hypothesis testing. We reproduce one of their simulation settings and apply our proposed methodology to compare performance with the methods they compared. In particular, we performed 2,000 replications under their setting (a) with $J=100,000$ mediated effect null hypotheses, $n=200$ (sample size), $\tau=0.1$, which controls how dispersed the true parameters $\delta_x$ and $\delta_y$ are under their respective alternatives, and $H_0^x$ and $H_0^y$ hold for 99.8\% of tests, $H_0^x$ and $H_a^y$ hold for 0.1\% of tests, and $H_a^x$ and $H_0^y$ hold for 0.1\% of tests. See \cite{du2023methods} for more details about the simulation set-up. We compare the results of our proposed minimax optimal test with those of the tests compared in \cite{du2023methods}. The ratios of the actual false positive rates (FPRs) to the true FPRs at different cut-off values are displayed in Table \ref{table:sims} (except for the Sobel/delta method test, which performs worse than the joint significance test). The minimax optimal test yields FPR ratios closer to one and with smaller standard deviations than any other test apart from the joint significance test, which was excessively conservative.

\begin{table}[h]
%\begin{center}
{\centering
\caption{Mean (standard deviation) of the ratio of the actual FPR to the nominal FPR using the joint significance, JT-comp \citep{huang2019genome}, HDMT \citep{dai2022multiple}, Sobel-comp \citep{du2023methods}, DACT \citep{liu2022large}, and minimax optimal tests.}
\label{table:sims}
        \begin{tabular}{l r@{.}l @{ } r@{.}l r@{.}l @{ } r@{.}l r@{.}l @{ } r@{.}l r@{.}l @{ } r@{.}l r@{.}l @{ } r@{.}l r@{.}l @{ } r@{.}l}% r@{.}l @{ } r@{.}l }
        \\
        \hline\hline
        Cutoff   & %\multicolumn{4}{c}{Sobel} & 
        \multicolumn{4}{c}{Joint sig.} & \multicolumn{4}{c}{JT-comp}& \multicolumn{4}{c}{HDMT} & \multicolumn{4}{c}{Sobel-comp}           & \multicolumn{4}{c}{DACT} & \multicolumn{4}{c}{MM-opt}   \\
         \hline
        $1$e$-3$     & %0&00 & (0&00) & 
        0&00 & (0&01) & 1&11 & (0&10) & 1&02 & (0&16) & 0&89 & (0&34) & 1&47 & (1&81) & 1&00 & (0&10) \\
        $1$e$-4$     & %0&00 & (0&00) & 
        0&00 & (0&01) & 1&48 & (0&37) & 1&02 & (0&46) & 0&81 & (0&66) & 2&90 & (4&19) & 1&01 & (0&31) \\
        $1$e$-5$     & %0&00 & (0&00) & 
        0&00 & (0&04) & 3&02 & (1&70) & 1&17 & (1&30) & 1&04 & (1&37) & 6&54 & (10&61) & 1&05 & (1&03) \\
        $1$e$-6$     & %0&00 & (0&00) & 
        0&01 & (0&32) & 9&99 & (10&09) & 1&81 & (4&46) & 1&81 & (4&63) & 16&50 & (31&56) & 1&35 & (3&97) \\
        %$5 \cdot 10^{-7}$
        $5$e$-7$ & %0&00 & (0&00) & 
        0&00 & (0&00) & 15&19 & (17&36) & 2&11 & (6&62) & 2&32 & (7&17) & 22&65 & (47&46) & 1&10 & (4&57) \\
        \hline
        \\
\end{tabular}}
%\end{center}
\end{table}

\section{\centering DATA EXAMPLES}
\label{sec:data-analysis}
\subsection{DCTRS Data Analysis}
We applied our methodology to data from the Database of Cognitive Training and Remediation Studies (DCTRS), which consists of data from several randomized trials testing the efficacy of cognitive remediation therapy in patients with schizophrenia. Cognitive remediation therapy targets patients' cognitive outcomes with the long-term goal of cognitive gains translating into improvements in more distal outcomes like functioning and quality of life. %In particular, 
We used data from three trials described in \citet{wykes1999effects}, 
\citet{wykes2007cognitive-a}, and \citet{wykes2007cognitive-b} consisting of 128 %136 
patients with complete treatment, mediator, and outcome data. In each study, patients were randomized to a cognitive remediation therapy arm or a control arm. %The purpose of this analysis is merely to illustrate the proposed methods and compare them with other methods. 
There are a number of issues (e.g., potential unobserved mediator-outcome confounding, variable follow-up time, and the fact that the data come from studies with different protocols) that we do not entirely account for in this analysis, and so our results should be viewed as purely illustrative of the proposed methodology.

Our exposure of interest, $A$, was an indicator of whether the patient was assigned to the treatment arm; our outcome of interest, $Y$, was the patient's Rosenberg self-esteem scale score %social behavior scale score---a functioning outcome measure---
at follow-up, and the potential mediator of interest, $M$, was the %Trailmaking Test Part A (TMTA) time to completion---a measure of processing speed---%
patient's Wechsler Adult Intelligence Scale (WAIS) working memory digit span test %Wisconsin Card Sorting Test (WCST) non-perseverative errors---a measure of executive dysfunction---
at their end of treatment, which measures both working memory and attention. 
%The mean social behavior scale score was 10.7, and the sample range was from 0 to 46. 
The mean self-esteem scale score was 33.6 with sample range from 17 to 50. Since the studies were all randomized trials, we only need to assume that variables included in $\bm C$ and $A$ are sufficient to control for confounding of the effect of $M$ on $Y$. We let $\bm C$ consist of the patients' sex, race, education, which study they participated in, and their self-esteem scale and working memory digit span test scores %social behavior scale and WCST non-perseverative errors 
measured at baseline. We provide more detailed discussion about the variable choices and modeling assumptions as well as unadjusted and more comprehensively adjusted analyses in the Supplementary Materials. All tests agree in these additional analyses. The relationship between covariate adjustment and agreement of the various tests is complex. Adjusting for more covariates can influence the underlying test statistics in either direction, and doing so does not consistently make the various tests more or less likely to agree. 
Previously, \cite{wykes2007cognitive-b} found cognitive remediation to improve working memory digit span test scores; 
\cite{wykes1999effects} found cognitive remediation to improve Rosenberg self-esteem scale scores. 
%Furthermore, \cite{wykes2007cognitive-a} found suggestive evidence of an association between WCST categories achieved and social behavior scale score. 
We are testing whether there is an effect of cognitive remediation on patients' self-esteem that is mediated by its effect on working memory and attention. 

%Statistically significant as well as suggestive evidence has been found of a protective effect of cognitive remediation therapy on cognition as measured by various components of the Wisconsin Card Sorting Test (WCST) \citep{delahunty1993specific,wykes1999effects,wykes2007cognitive-a,fiszdon2016cognitive,reeder2017new}. %, and by WCST perseverative errors \citep{reeder2017new,fiszdon2016cognitive}. 
%Furthermore, \cite{wykes2007cognitive-a} found suggestive evidence of an association between WCST categories achieved and social behavior scale score. We are interested in testing whether there is an effect of cognitive remediation therapy on patients' social behavior scale score that is mediated by their WCST non-perseverative errors. 

We fit models \eqref{eq:model1} and \eqref{eq:model2bis} using ordinary least squares, and estimated the natural indirect effect by the corresponding estimator given in Section \ref{sec:prelim}. %To account for between-study heterogeneity, we tested for and included certain interactions with study ID in these models. %Further details are provided in the supplementary materials. 
We then applied the three versions of the minimax optimal test and the three versions of the Bayes risk optimal test from Section \ref{sec:simulations}, and compared these with the joint significance, delta method, and vGvG tests, all at level $\alpha=0.05$, using the t-distribution approximation. The estimated total effect is 0.70, which is interpreted as cognitive remediation therapy modestly improving the self-esteem scale score by an average of 0.70 (0.10 standard deviations). The estimated natural indirect effect is 0.29, %0.51 %-0.80, 
which is interpreted as cognitive remediation therapy modestly improving self-esteem by an average of 0.29 (0.04 standard deviations) through its effect on working memory and attention as measured by the %WAIS working memory 
digit span test. 
%increasing (i.e., worsening) the social behavioral scale score by an average of 0.51 (or 0.06 standard deviations of the social behavior scale score) through its effect on cognition as measured by the WCST non-perseverative errors. Hypothesis test results and corresponding $p$-values are given in Table \ref{table:dctrs}. 
\begin{table}[h]
{\centering
%\begin{center}
\caption{Hypothesis test results for the NIE}
\label{table:dctrs}
\begin{tabular}{lc r@{.}l}
\\
\hline\hline
Hypothesis test & Reject &\multicolumn{2}{c}{$p$-value}\\
\hline
Bayes risk optimal, 0-1 loss & No & \multicolumn{2}{c}{Undefined} \\
Bayes risk optimal, quadratic loss & No & \multicolumn{2}{c}{Undefined} \\
Bayes risk optimal, quadratic loss, constrained & No & \multicolumn{2}{c}{Undefined} \\
Minimax optimal & Yes & 0&039 \\
Minimax optimal, truncated & Yes & 0&039 \\
Minimax optimal, $p$-value & Yes & 0&039 \\
Delta method & No & 0&256 \\
Joint significance & No & 0&115 \\
van Garderen and van Giersbergen    &   Yes  &   \multicolumn{2}{c}{Undefined} \\
\hline
\\
\end{tabular}\par
}
%\end{center}
\end{table}
\noindent The estimated proportion mediated is 41.0\%. The test statistic is plotted on the rejection regions of the minimax optimal and Bayes risk optimal tests in Figures S4--S8 %\ref{fig:rej_plots} 
of the Supplementary Materials. 

Hypothesis test results are presented in Table \ref{table:dctrs}, where we observe the improved power of the different versions of the minimax optimal test to reject the null hypothesis that the NIE is zero in favor of the alternative that it is nonzero. The interpretation %of the result 
for any of these tests is that if the true NIE were in fact zero, then the event we observed would have been unlikely to have occurred in the sense that the test would have rejected with at most 5\% probability. Since these tests disagree with the joint significance test, they lack the stronger interpretation that the probability of observing an event as or more extreme would be at most 5\% if the true NIE were zero. Having established that what we observed is ``unlikely'' in the former sense, one may proceed by examining whether the estimated effect size is %practically 
meaningful. %The estimated 
An effect size of 0.04 standard deviations of the self-esteem scale score may not be deemed relevant for understanding the mechanism by which cognitive remediation therapy affects self-esteem. However, had the estimate been larger, one might conclude that there was a meaningful indirect effect through working memory, and that such an estimate would have been unlikely to have been observed if there were no true underlying indirect effect. Alternatively, one might consider a proportion mediated exceeding 10\%, say, to be meaningful, and would then conclude that the indirect effect through working memory indeed explains a meaningful amount of the total effect of cognitive remediation. However, as the total effect is not statistically significant in this case, the proportion mediated may be a less compelling criterion for evaluating practical significance. %Of course, this may be a type~1 error; however, if this is the case, it has only occurred with 5\% probability. 
%This is also a case where the minimax optimal tests and Bayes risk optimal tests disagree, as the latter all fail to reject. %as well as the minimax optimal $p$-value based test. 
In this case, neither of the traditional tests nor any of the Bayes risk optimal tests reject, but the vGvG test does reject, agreeing with the minimax optimal tests. %and its truncated and $p$-value based versions. 
While we have conducted nine tests for purposes of illustration, in practice one should decide \emph{a priori} which test to use based on how they wish to navigate the power--interpretability trade-off to avoid data dredging.

%We also report the $p$-value corresponding to the minimax optimal test being slightly greater than the significance level of 0.05, reflecting
%both the fact that our $p$-value definition does not correspond one-to-one to the hypothesis test at a particular level since it consists of non-nested rejection regions, and the somewhat conservative nature of this $p$-value, though it is far less so than those of the joint significance and delta method tests. We stress here that were we to specify \emph{a priori} that our analysis would be based on the minimax optimal test, we would conclude that we reject $H_0$ in favor of $H_1$ at $\alpha$ level 0.05 even though the $p$-value exceeds 0.05, as discussed in the Supplementary Materials.% Section \ref{sec:pval}.

\subsection{Normative Aging Study Analysis}
\cite{liu2022large} performed large-scale hypothesis testing on the Normative Aging Study (NAS) to test the mediated effect of smoking status on forced expiratory flow at 25\%--75\% of the Forced Expiratory Vital capacity, %(FEF$_{25–75\%}$), 
a measure of lung function, through 484,613 DNA methylation CpG cites among men in Eastern Massachusetts. We apply our proposed methodology to this data and compare results. See \cite{liu2022large} for details regarding this data. Based on the minimax optimal test with a Bonferroni correction controlling FWER at 0.05, we detect significant mediated effects through sites cg03636183, cg05575921, cg06126421, and cg21566642. The conservative version of the Benjamini--Hochberg correction controlling FDR at 0.05 additionally detects a mediated effect through the site cg05951221. The Bonferroni procedure ran locally on a MacBook Pro with 2.3 GHz Intel Core i5 in 7.3 seconds; the Benjamini--Hochberg procedure in 18.9 seconds. \cite{liu2022large} note that each of these sites has previously been found to be related to smoking status and/or lung cancer risk. 
%Using the $p$-value corresponding to the minimax optimal test with Benjamini-Hochberg correction to control FDR at 0.05, we detect significant mediated effects through ...
In comparison, the FDR-adjusted joint significance test identified all of these except cg05951221, while the FDR-adjusted DACT method of \cite{liu2022large} found all of these to be significant as well as 14 others, the latter of which %they did not make any connections to previous findings in the literature. %These additional hits 
do not appear to have been found to be linked with either smoking or lung cancer. %These can potentially be addressed in subsequent confirmatory analysis; however, 
Furthermore, in our simulation study the DACT was found to have an inflated proportion of false positives---over 22 times that of the nominal proportion when using the appropriate cutoff---whereas our minimax optimal test never identified false positives at a rate any higher than 1.35 times that of the nominal proportion. These considerations cast doubt on whether the additional 14 CpG sites identified by the DACT method but not by ours are true positives. Thus, the FDR-adjusted minimax optimal test identified more CpG sites than the standard joint significance or delta method tests while also preserving approximate FDR control as shown theoretically and in the simulation study.

\section{\centering DISCUSSION}
\label{sec:discussion}
We have proposed two novel classes of tests for the composite null hypothesis $H_0:``\delta_x\delta_y=0"$ against $H_1:``\delta_x\delta_y\neq 0"$, which arises commonly in the context of mediation analysis. This is a challenging inferential task due to the non-uniform asymptotics of univariate test statistics of the product of coefficients, as well as the nonstandard structure of the null hypothesis space. We have constructed these tests to be both optimal in a decision theoretic sense, and to preserve type~1 error uniformly over the null hypothesis space---exactly so on both counts in the case of the minimax optimal test, and approximately so in the case of the Bayes risk optimal test. We have described procedures for carrying out large-scale hypothesis testing of many product-of-coefficient hypotheses, controlling for both familywise error rate and false discovery rate. %We defined a generalized $p$-value corresponding to the extended minimax optimal test. 
We have also considered some shortcomings of these tests in terms of interpretability, and have developed modifications to remedy some of these. Each of these comes at some cost of power, and we have illustrated a fundamental trade-off between the objectives of power and interpretability. Lastly, we have provided an R package to implement the proposed tests.

It is natural to ask which of the proposed tests should be used in practice. %There are a few trade-offs between these two tests. As seen in the simulation study, the Bayes risk optimal test offers very slight improvements in power for moderate effect sizes relative to standard error that are greater than the even slighter improvements in power offered by the minimax optimal test for smaller effect sizes. 
This is highly subjective, and depends on the criteria by which one judges hypothesis tests, as well as how much value one places on power in different parts of the alternative hypothesis space. We summarize the trade-offs with respect to power, scalability for large-scale hypothesis testing, and interpretability of the various tests discussed in this article in Table \ref{table:summary}. 
\begin{table}[h]
%\begin{center}
{\centering
\caption{Summary of properties of various tests}
\label{table:summary}
\begin{tabular}{lccccc}
\\
\hline\hline
Hypothesis test & Power & Scalability to & Monotonicity & Information & Bounded away\\
 &  &   mult.~testing  & in $\alpha$ & coherence & from null\\
\hline
Bayes risk optimal: &  &  &	    &	    &	 \\
-- 0-1 loss & highest & no &	can be    &	no    &	yes \\
-- quad.~loss  &	highest   &	no    &	can be    &	no    &	no \\
-- quad.~loss, constr. &	higher &	no    &	can be    &	no    &	yes \\
Minimax optimal: &  &  &	    &	    &	 \\
-- standard    &	highest   &	yes   &	no    &	no    &	no \\
-- truncated    &	higher    &	yes   &	no    &	no    &	yes \\
-- $p$-value  &	moderate  &	yes   &	yes   &	no    &	no \\
Delta method    &	lowest    &	yes   &	yes   &	yes   &	yes \\
Joint significance  &	lower &	yes   &	yes   &	yes   &	yes \\
vGvG    &	highest   &	no    &	yes   &	no    &	yes \\
\hline
\\
\end{tabular}\par
}
%\end{center}
\end{table}
We have introduced tests that improve on the power of traditional tests, but not all of which have rejection regions bounded away from the null hypothesis space or that are monotonic in $\alpha$. If one is only concerned with maximizing power subject to the constraint of uniformly preserving type 1 error, then these tests will be desirable. If one is concerned about the possibility of rejecting with small values of the test statistic, then one may employ a truncated or constrained version of these tests. If one finds the nonmonotocity property inadmissible, then one can use the test based on the $p$-value corresponding to the minimax optimal test. If one is concerned about both, then one can use a truncated version of the $p$-value based test. The minimax optimal test has the advantage of being a closed form exact solution to its corresponding optimization problem as well as preserving type~1 error exactly in the limit. The minimax optimal test also can be generated almost instantaneously for all unit fraction values of $\alpha$, and the generalized test can be generated for other values of $\alpha$, whereas for the Bayes risk optimal test, the optimization step to generate the rejection region must be performed each time a test for a new value of $\alpha$ is desired. %That said, once the latter test is generated once for a value of~$\alpha$, it can be stored and used again without having to ever rerun the optimization. %For non-unit fraction values of $\alpha$, the Bayes risk optimal test will likely be better powered than the extended minimax optimal test, especially for values of $\alpha$ furthest away from a unit fraction. 
It is easy to obtain $p$-values for the minimax optimal test. %, whereas it is not yet clear how computationally feasible it would be to compute a $p$-value corresponding to the Bayes risk optimal test as defined in the Supplementary Materials. %Section \ref{sec:pval}. 
Lastly, the minimax optimal test is a strictly deterministic test, whereas the Bayes risk optimal test contains a few cells in the rejection region where one rejects with non-degenerate probability. If a non-random test is desired, the latter can be converted to a slightly more conservative non-random test. %by changing all non-degenerate rejection probabilities to zero. %The Bayes risk optimal test has the advantage of its rejection region being bounded away from the null hypothesis space, at least for $\alpha=0.05$. %Overall, we believe the strengths of the minimax optimal test outweigh those of the Bayes risk optimal test, and therefore we recommend the former for practical use. However, the comparison is subjective, and we also believe the Bayes risk optimal test is a strong alternative to apply in practice as well.

%There are a number of important future research directions stemming from this work. There are many inferential problems that face similar issues with non-uniform convergence. Examples commonly arise in partial identification bounds, where there are often minima and maxima of estimates involved, which exhibit similar asymptotic behavior. The approaches proposed in this article can likely be adapted to such settings. Under different models, mediated effects can %Not all mediated effect hypotheses take the form of a single product of coefficients, but rather 
%take the form of sums of products of coefficients. This considerably more complex setting will be important to address to allow for less restrictive models when performing inference on mediated effects. %Yet another important direction for future research is to extend to null hypotheses of the products of coefficients being equal to a nonzero scalar, which will allow for the inversion of the hypothesis test to produce confidence intervals with improved finite-sample performance. %We plan to further consider the properties of our $p$-value definition in Section \ref{sec:pval}, and explore its application to other settings where the more traditional $p$-value definition does not apply. 
%Lastly, we plan to further develop the tests for products of more than two coefficients discussed in the Supplementary Materials. %Section \ref{sec:extensions}.

We close with some practical recommendations for implementation of our proposed methodology. We reiterate that which test to use is subjective and comes down to how the investigator thinks about interpretability and weighs the properties summarized in Table \ref{table:summary}. One will often wish to avoid reporting significant findings that are not ``practically relevant'', in which case we would recommend pre-specifying one's criterion of ``practical relevance''. This might be in terms of the proportion mediated, say if the objective is to quantify how much of a significant total effect is transmitted through the candidate mediator, or it could be in terms of the indirect effect estimate itself if this is considered to be the more relevant measure to threshold. Alternatively, if one wishes to take into account uncertainty, then one might prefer to threshold based on the test statistics, which would correspond to using the truncated minimax optimal or constrained Bayes risk optimal test.

\section*{\centering SOFTWARE}
\label{sec:software}
\if1\blind { 
An R package implementing the minimax optimal test is available at \url{https://github.com/achambaz/mediation.test}.
} \fi

\if0\blind { 
An R package implementing the minimax optimal test is available at [author's github url].
} \fi

\if1\blind { 
\section*{\centering ACKNOWLEDGMENTS}
This publication was supported by the National Center for Advancing Translational Sciences, National Institutes of Health, through Grant Number KL2TR001874. The content is solely the responsibility of the authors and does not necessarily represent the official views of the NIH.
} \fi

\section*{\centering DATA AVAILABILITY STATEMENT}
\label{sec:software}
We do not have permission from the NIMH to share the DCTRS data, but interested parties can apply to the NIMH Data Archive for access. Summary statistics used for the NAS data analysis are available in the supplementary materials of \cite{liu2022large}.

\bibliographystyle{apalike}

\bibliography{references}

\begin{thebibliography}{}

\bibitem[Avin et~al., 2005]{avin2005identifiability}
Avin, C., Shpitser, I., and Pearl, J. (2005).
\newblock Identifiability of path-specific effects.
\newblock In {\em IJCAI-05, Proceedings of the Nineteenth International Joint
  Conference on Artificial Intelligence}, pages 357--363.

\bibitem[Barfield et~al., 2017]{barfield2017testing}
Barfield, R., Shen, J., Just, A.~C., Vokonas, P.~S., Schwartz, J., Baccarelli,
  A.~A., VanderWeele, T.~J., and Lin, X. (2017).
\newblock Testing for the indirect effect under the null for genome-wide
  mediation analyses.
\newblock {\em Genetic Epidemiology}, 41(8):824--833.

\bibitem[Baron and Kenny, 1986]{baron1986moderator}
Baron, R.~M. and Kenny, D.~A. (1986).
\newblock The moderator--mediator variable distinction in social psychological
  research: Conceptual, strategic, and statistical considerations.
\newblock {\em Journal of Personality and Social Psychology}, 51(6):1173.

\bibitem[Belloni et~al., 2014]{belloni2014inference}
Belloni, A., Chernozhukov, V., and Hansen, C. (2014).
\newblock Inference on treatment effects after selection among high-dimensional
  controls.
\newblock {\em The Review of Economic Studies}, 81(2):608--650.

\bibitem[Berger, 1999]{berger1999comment}
Berger, R. (1999).
\newblock Comment on {P}erlman and {W}u, “{T}he {E}mperor’s {N}ew
  {T}ests” (with rejoinder by authors).
\newblock {\em Statistical Science}, 14(4):370--381.

\bibitem[Cohen et~al., 2013]{cohen2013applied}
Cohen, J., Cohen, P., West, S.~G., and Aiken, L.~S. (2013).
\newblock {\em Applied Multiple Regression/Correlation Analysis for the
  Behavioral Sciences}.
\newblock Routledge.

\bibitem[Dai et~al., 2022]{dai2022multiple}
Dai, J.~Y., Stanford, J.~L., and LeBlanc, M. (2022).
\newblock A multiple-testing procedure for high-dimensional mediation
  hypotheses.
\newblock {\em Journal of the American Statistical Association},
  117(537):198--213.

\bibitem[Ding and Vanderweele, 2016]{ding2016sharp}
Ding, P. and Vanderweele, T.~J. (2016).
\newblock Sharp sensitivity bounds for mediation under unmeasured
  mediator-outcome confounding.
\newblock {\em Biometrika}, 103(2):483--490.

\bibitem[Du et~al., 2023]{du2023methods}
Du, J., Zhou, X., Clark-Boucher, D., Hao, W., Liu, Y., Smith, J.~A., and
  Mukherjee, B. (2023).
\newblock Methods for large-scale single mediator hypothesis testing: Possible
  choices and comparisons.
\newblock {\em Genetic Epidemiology}, 47(2):167--184.

\bibitem[Huang, 2019]{huang2019genome}
Huang, Y.-T. (2019).
\newblock Genome-wide analyses of sparse mediation effects under composite null
  hypotheses.
\newblock {\em The Annals of Applied Statistics}, 13(1):60--84.

\bibitem[Imai et~al., 2010]{imai2010general}
Imai, K., Keele, L., and Tingley, D. (2010).
\newblock A general approach to causal mediation analysis.
\newblock {\em Psychological Methods}, 15(4):309.

\bibitem[Kang et~al., 2020]{kang2020two}
Kang, H., Lee, Y., Cai, T.~T., and Small, D.~S. (2020).
\newblock Two robust tools for inference about causal effects with invalid
  instruments.
\newblock {\em Biometrics}.

\bibitem[Liu et~al., 2022]{liu2022large}
Liu, Z., Shen, J., Barfield, R., Schwartz, J., Baccarelli, A.~A., and Lin, X.
  (2022).
\newblock Large-scale hypothesis testing for causal mediation effects with
  applications in genome-wide epigenetic studies.
\newblock {\em Journal of the American Statistical Association},
  117(537):67--81.

\bibitem[MacKinnon et~al., 2002]{mackinnon2002comparison}
MacKinnon, D.~P., Lockwood, C.~M., Hoffman, J.~M., West, S.~G., and Sheets, V.
  (2002).
\newblock A comparison of methods to test mediation and other intervening
  variable effects.
\newblock {\em Psychological Methods}, 7(1):83.

\bibitem[Miles et~al., 2017a]{miles2017partial}
Miles, C.~H., Kanki, P., Meloni, S., and Tchetgen~Tchetgen, E.~J. (2017a).
\newblock On partial identification of the natural indirect effect.
\newblock {\em Journal of Causal Inference}, 5(2).

\bibitem[Miles et~al., 2017b]{miles2017quantifying}
Miles, C.~H., Shpitser, I., Kanki, P., Meloni, S., and Tchetgen~Tchetgen, E.~J.
  (2017b).
\newblock Quantifying an adherence path-specific effect of antiretroviral
  therapy in the {N}igeria {PEPFAR} program.
\newblock {\em Journal of the American Statistical Association},
  112(520):1443--1452.

\bibitem[Miles et~al., 2020]{miles2020semiparametric}
Miles, C.~H., Shpitser, I., Kanki, P., Meloni, S., and Tchetgen~Tchetgen, E.~J.
  (2020).
\newblock On semiparametric estimation of a path-specific effect in the
  presence of mediator-outcome confounding.
\newblock {\em Biometrika}, 107(1):159--172.

\bibitem[Perlman and Wu, 1999]{perlman1999emperor}
Perlman, M.~D. and Wu, L. (1999).
\newblock The emperor’s new tests.
\newblock {\em Statistical Science}, 14(4):355--369.

\bibitem[Robins and Ritov, 1997]{robins1997toward}
Robins, J.~M. and Ritov, Y. (1997).
\newblock Toward a curse of dimensionality appropriate ({CODA}) asymptotic
  theory for semi-parametric models.
\newblock {\em Statistics in Medicine}, 16(3):285--319.

\bibitem[Rosenblum et~al., 2014]{rosenblum2014optimal}
Rosenblum, M., Liu, H., and Yen, E.-H. (2014).
\newblock Optimal tests of treatment effects for the overall population and two
  subpopulations in randomized trials, using sparse linear programming.
\newblock {\em Journal of the American Statistical Association},
  109(507):1216--1228.

\bibitem[Sobel, 1982]{sobel1982asymptotic}
Sobel, M.~E. (1982).
\newblock Asymptotic confidence intervals for indirect effects in structural
  equation models.
\newblock {\em Sociological Methodology}, 13:290--312.

\bibitem[van Garderen and van Giersbergen, 2020]{van2020almost}
van Garderen, K.~J. and van Giersbergen, N. (2020).
\newblock Almost similar tests for mediation effects and other hypotheses with
  singularities.
\newblock {\em arXiv preprint arXiv:2012.11342}.

\bibitem[van Garderen and van Giersbergen, 2022]{van2022optimality}
van Garderen, K.~J. and van Giersbergen, N. (2022).
\newblock On the optimality of the {LR} test for mediation.
\newblock {\em Symmetry}, 14(1):178.

\bibitem[van Garderen and van Giersbergen, 2024]{van2024nearly}
van Garderen, K.~J. and van Giersbergen, N.~P. (2024).
\newblock A nearly similar powerful test for mediation.
\newblock {\em Review of Economics and Statistics}, pages 1--26.

\bibitem[VanderWeele, 2015]{vanderweele2015explanation}
VanderWeele, T.~J. (2015).
\newblock {\em Explanation in Causal Inference: Methods for Mediation and
  Interaction}.
\newblock Oxford University Press.

\bibitem[VanderWeele and Vansteelandt, 2009]{vanderweele2009conceptual}
VanderWeele, T.~J. and Vansteelandt, S. (2009).
\newblock Conceptual issues concerning mediation, interventions and
  composition.
\newblock {\em Statistics and its Interface}, 2:457--468.

\bibitem[Wright, 1921]{wright1921correlation}
Wright, S. (1921).
\newblock Correlation and causation.
\newblock {\em Journal of Agricultural Research}, 20(7):557--585.

\bibitem[Wykes et~al., 2007a]{wykes2007cognitive-a}
Wykes, T., Newton, E., Landau, S., Rice, C., Thompson, N., and Frangou, S.
  (2007a).
\newblock Cognitive remediation therapy ({CRT}) for young early onset patients
  with schizophrenia: An exploratory randomized controlled trial.
\newblock {\em Schizophrenia Research}, 94(1-3):221--230.

\bibitem[Wykes et~al., 1999]{wykes1999effects}
Wykes, T., Reeder, C., Corner, J., Williams, C., and Everitt, B. (1999).
\newblock The effects of neurocognitive remediation on executive processing in
  patients with schizophrenia.
\newblock {\em Schizophrenia Bulletin}, 25(2):291--307.

\bibitem[Wykes et~al., 2007b]{wykes2007cognitive-b}
Wykes, T., Reeder, C., Landau, S., Everitt, B., Knapp, M., Patel, A., and
  Romeo, R. (2007b).
\newblock Cognitive remediation therapy in schizophrenia: Randomised controlled
  trial.
\newblock {\em The British Journal of Psychiatry}, 190(5):421--427.

\end{thebibliography}

\newpage

\appendix
\pagebreak
%\widetext
\begin{center}
\textbf{\Large Supplemental Materials: Optimal Tests of the Composite Null Hypothesis
Arising in Mediation Analysis}
\end{center}
\renewcommand{\theequation}{\thesection.\arabic{equation}}
\renewcommand{\thefigure}{S\arabic{figure}}

\section{Mediation analysis details}
The composite null hypothesis $H_0$ arises naturally in mediation analysis
under certain modeling assumptions. In order to define the indirect effect of interest, we first introduce notation. First, suppose we observe $n$ i.i.d.~copies of
$(\bm{C}^\top,A,M,Y)^\top$, where $A$ is the exposure of interest, $Y$ is the
outcome of interest, $M$ is a potential mediator that is temporally
intermediate to $A$ and $Y$, and $\bm{C}$ is a vector of baseline covariates that
we will assume throughout to be sufficient to control for various sorts of
confounding needed for the indirect effect to be identified. %The \emph{natural indirect effect} (NIE) is the mediated effect of $A$ on $Y$ through $M$, i.e., the effect along the causal pathway $A\rightarrow M\rightarrow Y$. 

We now introduce counterfactuals in order to define the natural indirect effect. Let $Y(a,m)$ be
the counterfactual outcome that we would have observed (possibly contrary to
fact) had $A$ been set to the level $a$ and $M$ been set to the level
$m$. Similarly, let $M(a)$ be the counterfactual mediator value we would have
observed (possibly contrary to fact) had $A$ been set to $a$. Lastly, define
the nested counterfactual $Y\{a',M(a'')\}$ to be the counterfactual outcome we
would have observed had $A$ been set to $a'$ and $M$ been set to the
counterfactual value it would have taken had $A$ instead been set to $a''$.
The \emph{natural direct effect} (NDE) and \emph{natural indirect effect}
(NIE) on the difference scale are then defined to be
\begin{align*}
 \mathrm{NDE}(a',a'')&:= E\left[Y\{a',M(a'')\}\right]-E\left[Y\{a'',M(a'')\}\right]\\
 \mathrm{NIE}(a',a'')&:= E\left[Y\{a',M(a')\}\right]-E\left[Y\{a',M(a'')\}\right].
\end{align*}
These additively decompose the total effect, \[\mathrm{TE}(a',a''):= E[Y\{a',M(a')\}]-E[Y\{a'',M(a'')\}]=E\{Y(a')\}-E\{Y(a'')\},\] where $Y(a)$ is the counterfactual outcome we would have observed (possibly contrary to fact) had $A$ been set to $a$. Our focus will be on the natural indirect effect.

The natural indirect effect is nonparametrically identified under certain
causal assumptions, viz., consistency, positivity, and a number of no
unobserved confounding assumptions, which we will not review here for the sake
of brevity and because our focus is on statistical inference rather than
identification. These assumptions are well documented in the causal inference
literature (see \citet{vanderweele2015explanation} for an overview). The
identification formula for the natural indirect effect (also known as the
mediation formula) is
\begin{align*}
    \int_{\bf{c}}\int_{m}&E(Y\mid M, A=a', \bm{C})\\
    &\times\left\{f_{M\mid A, \bm{C}}(m\mid A=a', \bm{C})-f_{M\mid A, \bm{C}}(m\mid A=a'', \bm{C})\right\}d\mu(m)f_{\bm{C}}(c)d\mu(\bm{c}),
\end{align*}
 where $\mu(m)$ and $\mu(\bm{c})$ are dominating measures with respect to the distributions of $M$ and $\bm{C}$. 
 
 If the linear models with main
 effect terms given by
\begin{align}
 \label{eq:model1sm}
 E(M\mid A=a, \bm{C}=\bm{c}) &= \beta_0 + \beta_1a + \bm{\beta_2}^\top \bm{c}\\
 \label{eq:model2sm}
 E(Y\mid A=a,M=m,\bm{C}=\bm{c}) &= \theta_0 + \theta_1a + \theta_2m + \bm{\theta_3}^\top \bm{c},
\end{align}
are correctly-specified, then the identification formula for the
natural indirect effect reduces to $\beta_1\theta_2$. The \emph{product
 method} estimator estimates $\beta_1$ and $\theta_2$ by fitting the above
regression models and takes the product of these coefficient estimates. In
fact, following the path analysis literature of \citet{wright1921correlation},
\citet{baron1986moderator} originally defined the indirect effect to be
$\beta_1\theta_2$, where $\bm{C}$ is empty in the above model, rather than in terms
of counterfactuals, and proposed the product method estimator. This is an
extremely popular estimator of indirect
effects. \citet{vanderweele2009conceptual} demonstrated the consistency of the
product method for the natural indirect effect under the above linear models.
Under models \eqref{eq:model1sm} and \eqref{eq:model2sm} and standard regularity
conditions, the two factors of the product method estimator will satisfy the
uniform joint convergence statement in~(1). In fact,
(1) will hold for a more general class of models, though there are
important limitations to this class. For instance, consider the outcome model
with exposure-mediator interaction replacing model \eqref{eq:model2sm}:
\begin{align}
 \label{eq:model2bissm}
 E(Y\mid A=a,M=m,\bm{C}=\bm{c}) &= \theta_0 + \theta_1a + \theta_2m + \theta_3am +
 \bm{\theta_4}^\top \bm{c}.
\end{align}
Under models \eqref{eq:model1sm} and \eqref{eq:model2bissm}, the natural indirect
effect is identified by
$\mathrm{NIE}(a',a'')=(\theta_2+\theta_3a')\beta_1(a'-a'')$, hence the
estimator $(\hat{\theta}_2+\hat{\theta}_3a')\hat{\beta}_1(a'-a'')$, where
$\hat{\theta}_2$, $\hat{\theta}_3$, and $\hat{\beta}_1$ are estimated by
fitting the linear regression models \eqref{eq:model1sm} and
\eqref{eq:model2bissm}, will also satisfy (1) under standard
regularity conditions, with $\delta_x=\theta_2+\theta_3a'$ and
$\delta_y=\beta_1(a'-a'')$ and with the corresponding plug-in estimators for
$\hat{\delta}_x$ and $\hat{\delta}_y$. This is a key extension, as modern
causal mediation analysis' ability to account for the presence of
exposure-mediator interactions is one of its important advantages over
traditional mediation methods such as path analysis and the
\citet{baron1986moderator} approach. Unfortunately, the functional form of
$E(Y\mid A=a,M=m,\bm{C}=\bm{c})$ and $E(M\mid A=a, \bm{C}=\bm{c})$ does not yield NIE estimators
that factorize to satisfy (1) in general, as the identification
formula can result in sums or integrals of products of coefficients. Handling
models that yield such identification formulas will be an important direction
in which to extend the work done in the present article. Extending the theory
in the present article to more general settings than (1) is also
especially important because mediated effects through multiple mediators can
also be identified by sums of products of coefficients.

\section{Figure demonstrating the conservativeness of the asymptotic approximation to the true distributions of the delta method and joint significance tests}
\begin{figure}[h]
\centering
\begin{tabular}{ccc}
\includegraphics[width=.45\textwidth]{density_plots_1e5}
& 
&
\includegraphics[width=.45\textwidth]{density_plots_js_1e5}
\\
(a)	&	&	(b)
\end{tabular}
\caption{(a) Density plots estimated by Monte Carlo sampling of $Z_n^{\text{prod}}$ under $\delta_y=0$ and varying $\delta_x$ with $n=100$. The standard normal density curve is displayed in black. (b) Density plots estimated by Monte Carlo sampling of $Z^{\text{min}}_n$ under $\delta_y=0$ and varying $\delta_x$ with $n=100$. The folded standard normal density curve is displayed in black.}
\label{fig:density}
\end{figure}

\section{Plot of the power function surface of the minimax optimal test}

\begin{figure}[H]
\centering
 \includegraphics[width=1\textwidth]{power-surface.pdf} 
 \caption{The power function surface for the minimax optimal test over $(\delta^*_x,\delta^*_y)$. The power function takes the value 0.05 everywhere on the $\delta^*_x$ and $\delta^*_y$ axes, and is strictly greater than 0.05 everywhere else, while going to one as $\delta^*_x$ and $\delta^*_y$ both go to infinity.}
 \label{fig:power}
\end{figure}

\section{Details of the Bayes risk optimization problem formulation}
We denote the full null hypothesis space by $G:=\{(\delta_x^*,\delta_y^*):\delta_x^*\delta_y^*=0\}$ and a corresponding fine grid within the null hypothesis space by $G':= \{(-2b,0)^{\top},(-2b+b/K,0)^{\top},\ldots,(2b-b/K,0)^{\top},(2b,0)^{\top},
    (0,-2b)^{\top},(0,-2b+b/K)^{\top},\ldots,(0,2b-b/K)^{\top},(0,2b)^{\top}\}$
\begin{comment}
\begin{align*}
    G':= \{&(-2b,0)^{\top},(-2b(K-1)/K,0)^{\top},\ldots,(2b(K-1)/K,0)^{\top},(2b,0)^{\top},\\
    &(0,-2b)^{\top},(0,-2b(K-1)/K)^{\top},\ldots,(0,2b(K-1)/K)^{\top},(0,2b)^{\top}\}
\end{align*}

\begin{align*}
    G':= \{&(-2b,0)^{\top},(-2b+b/K,0)^{\top},\ldots,(2b-b/K,0)^{\top},(2b,0)^{\top},\\
    &(0,-2b)^{\top},(0,-2b+b/K)^{\top},\ldots,(0,2b-b/K)^{\top},(0,2b)^{\top}\}
\end{align*}
\end{comment}
for a suitably large integer $K$. This grid extends beyond $B$, because for distributions centered outside of, but near $B$, there will still be a high probability of being in $B$, and we need to control the type~1 error for such distributions. We then consider the $(2K+1)\times (2K+1)$ grid on $B$, %by $\{(x,y):x\in\{-b, -b(K-1)/K, \ldots, b(K-1)/K, b\}, y\in\{-b, -b(K-1)/K, \ldots, b(K-1)/K, b\}\}$, 
$\{-b, -b(K-1)/K, \ldots, b(K-1)/K, b\}^2$, 
and denote the squares induced by this grid %\[R_{k,k'}:=[2b(k-1)/K-b,2bk/K-b)\times[2b(k'-1)/K-b,2bk'/K-b)\] 
$R_{k,k'}:=[b(k-1)/K,bk/K)\times[b(k'-1)/K,bk'/K)$ 
for each $k$ and $k'$ in $-K+1, -K+2, \ldots, K-1, K$. Let $\mathcal{R}$ denote the collection of squares $R_{k,k'}$ for all $k$ and $k'$ in $-K+1, -K+2, \ldots, K-1, K$. Within $B$, we then define a class $\mathcal{M}_\mathcal{R}$ of random rejection regions whose realizations are constant within each square. That is, if $(z^x,z^y)\in R_{k,k'}$, $M(z^x,z^y,u)=M(z^{x'},z^{y'},u)$ for all $(z^{x'},z^{y'})\in R_{k,k'}$ and all $u\in[0,1]$. Denote the rejection probability corresponding to each square $r\in\mathcal{R}$ by $m_r$. These are the unknown arguments defining the rejection region that are to be optimized over.

Under a random test defined by the rejection probabilities $m_r$ on the squares in $\mathcal{R}$, the Bayes risk in (8) %\eqref{eq:bayesrisk} 
decomposes as the sum
  \begin{align*}
    &\int             E_{\delta_x^*,\delta_y^*}\left(L\left[I\left\{(Z^x_*,Z^y_*)\in
        R_{\bar{B}}\right\},\delta_x^*,\delta_y^*\right]
      I\left\{(Z^x_*,Z^y_*)\in \bar{B}\right\}\right)d\Lambda(\delta_x^*,\delta_y^*) \\
    +&                                                                     \int
    E_{\delta_x^*,\delta_y^*}\left[L\left\{M(Z^x_*,Z^y_*,U),\delta_x^*,\delta_y^*\right\}
      I\left\{(Z^x_*,Z^y_*)\in B\right\}\right]d\Lambda(\delta_x^*,\delta_y^*),
  \end{align*}
  of which  only the  second term depends  on $(m_{r})_{r\in\mathcal{R}}$,
  being equal to
  \begin{align*}
    &\int\sum_{r\in\mathcal{R}}
          E_{\delta_x^*,\delta_y^*}\left[L\left\{M(Z_*^x,Z_*^y,U),\delta_x^*,\delta_y^*\right\}I\left\{(Z^x_*,Z^y_*)\in r\right\}\right] d\Lambda\left(\delta_x^*,\delta_y^*\right) \\
        =&\int\sum_{r\in\mathcal{R}}
          E_{\delta_x^*,\delta_y^*}\left[L\left\{M(Z_*^x,Z_*^y,U),\delta_x^*,\delta_y^*\right\}\mid(Z^x_*,Z^y_*)\in
          r\right]
          \mathrm{Pr}_{\delta_x^*,\delta_y^*}\left\{(Z^x_*,Z^y_*)\in r\right\} d\Lambda\left(\delta_x^*,\delta_y^*\right) \\
        %=&\int\sum_{r\in\mathcal{R}}
          %\left\{L\left(1,\delta_x^*,\delta_y^*\right)m_{r}                  +
          %L\left(0,\delta_x^*,\delta_y^*\right)(1-m_{r}) \right\}\mathrm{Pr}_{\delta_x^*,\delta_y^*}\left\{(Z^x_*,Z^y_*)\in   r\right\}
          %d\Lambda\left(\delta_x^*,\delta_y^*\right) \\
        %=&\int                           L\left(0;\delta_x^*,\delta_y^*\right)
          %\mathrm{Pr}_{\delta_x^*,\delta_y^*}\left\{(Z^x_*,Z^y_*)\in   B\right\}d\Lambda\left(\delta_x^*,\delta_y^*\right)\\
        %&+ 
        =&\sum_{r\in\mathcal{R}} (1-m_{r}) \int
          L\left(0,\delta_x^*,\delta_y^*\right)
          \mathrm{Pr}_{\delta_x^*,\delta_y^*}\left\{(Z^x_*,Z^y_*)\in   r\right\}
          d\Lambda\left(\delta_x^*,\delta_y^*\right).
  \end{align*}
From the above, the objective function is affine in the unknown variables $(m_r)_{r\in\mathcal{R}}$.

We also define a discretized approximation of the type~1 error constraint (9). %\eqref{eq:type:1:constraint}. 
For each $(\delta_x^*,\delta_y^*)\in G'$, 
\begin{align}
    \alpha
    \geq&\;
      \mathrm{Pr}_{\delta_x^*,\delta_y^*}\left\{M(Z_*^x,Z_*^y,U)=1,
      (Z^x_*,Z^y_*)\in \bar{B}\right\}    +    \mathrm{Pr}_{\delta_x^*,\delta_y^*}\left\{M(Z^x_*,Z^y_*,U)=1,
      (Z^x_*,Z^y_*)\in B\right\} \nonumber \\ 
    %=&\;             \mathrm{Pr}_{\delta_x^*,\delta_y^*}\left\{M(Z_*^x,Z_*^y,U)=1,
      %(Z^x_*,Z^y_*)\in \bar{B}\right\}\nonumber\\                                                      &+
      %\sum_{r\in\mathcal{R}}\mathrm{Pr}_{\delta_x^*,\delta_y^*}\left\{M(Z^x_*,Z^y_*,U)=1,
      %(Z^x_*,Z^y_*)\in             r\right\}            \nonumber             \\
    =&\;             \mathrm{Pr}_{\delta_x^*,\delta_y^*}\left\{M(Z_*^x,Z_*^y,U)=1,
      (Z^x_*,Z^y_*)\in \bar{B}\right\} +
      \sum_{r\in\mathcal{R}}m_r\mathrm{Pr}_{\delta_x^*,\delta_y^*}\left\{(Z^x_*,Z^y_*)\in
      r\right\},\label{eq:t1econstraint} 
  \end{align}
which is also affine in $(m_{r})_{r\in\mathcal{R}}$. Lastly, we need the probability constraints $0\leq m_r\leq 1$ for all $r\in\mathcal{R}$. These %probability constraint 
inequalities along with \eqref{eq:t1econstraint} define a linear optimization problem that approximates the Bayes risk optimization problem in $(m_{r})_{r\in\mathcal{R}}$, where all other terms are known. 

The linear optimization problem is high-dimensional, with the dimension of the unknowns %, $\lvert\mathcal{R}\rvert$, 
being $4K^2$, and the number of constraints being $8K^2+8K+1$. %(including both probability constraints). %However, these can be reduced by about a factor of 64 by enforcing symmetry in the rejection region, as the number of unknowns can be cut to about an eighth. In any case, 
However, this is also a highly sparse optimization problem, which allows for efficient computation despite the dimensionality. The type~1 error constraints %in \eqref{eq:t1econstraint} 
consist of only $8K+1$ inequalities, and the matrix characterizing the remaining probability constraints contains only one nonzero element per row.

\section{Minimax optimal test $p$-values}
\label{sec:pval}
A common definition of a $p$-value for a simple null hypothesis is the probability of observing the same or more extreme values of the test statistic under the null hypothesis. %However, this definition will not apply to our setting for a number of reasons. The first is that we have a composite null hypothesis rather than a simple null hypothesis. 
For a composite null hypothesis, a $p$-value may be defined similarly, but for the least-favorable distribution in the null hypothesis space. %However, this definition is ambiguous for a vector-valued test statistic. In fact, it 
This definition is only useful when rejection regions for each $\alpha$ can be defined as level sets of some function of $\alpha$, such that ``more extreme'' than a certain value can be taken to mean the set of values that the test always rejects whenever it rejects that particular value. The tests we have defined in this article admit no such representation. %, as there are some values of the test statistic that will be rejected for one $\alpha$ level, but will not be rejected for another smaller $\alpha$ level. For instance, the minimax optimal test for $\alpha=1/3$ will reject 
For instance, if $(Z^x_n,Z^y_n)=(\Phi^{-1}(4/5),\Phi^{-1}(5/7))$, the minimax optimal test will reject $H_0$ for $H_1$ for $\alpha=1/3$ but not for $\alpha=1/2$. %(because $4/6 \leq t \leq 5/6$ is met for both $t = 4/5$ and $t = 5/7$ on the one hand, whereas  $3/4 \leq t \leq 4/4$ is met only for $t = 4/5$ on the other hand). %Defining $p$-values for the minimax optimal test is further complicated by the fact that it is only defined for levels of $\alpha$ that are unit fractions.

Alternatively, we adopt the more general definition of a $p$-value as a statistic whose law stochastically dominates that of the uniform distribution with support $[0,1]$ for all distributions in the null hypothesis space. We can define a tentative $p$-value %corresponding to the minimax optimal test 
to be $\hat{p}:= \int_0^1 I\{(Z^x_*,Z^y_*)\notin R_{mm}(\alpha)\}d\alpha$, where $R_{mm}(\alpha)$ is the rejection region for a level $\alpha$ extended minimax optimal test. For the minimax optimal test, %this requires the test to be defined for all levels $\alpha$ in the unit interval, and not just unit fractions. To this end, 
we use the extension of the minimax optimal test to non-unit fraction values of $\alpha$ described in Section 3.2. The form of this $p$-value is derived in Section \ref{sec:proofs}. 
%conservatively define the rejection region for any level $\alpha$ to be $R_{\lceil\alpha^{-1}\rceil^{-1}}$, i.e., the rejection region is constant for all $\alpha$ between unit fractions. 
We conjecture that this will result in a valid $p$-value in the sense that it will dominate the uniform distribution over [0,1] uniformly over the null hypothesis space. The plot in Figure \ref{fig:pval} shows the empirical cumulative distribution function of a Monte Carlo sample of this $p$-value for the minimax optimal test under the least-favorable distribution where $(\delta_x^*,\delta_y^*)=(0,0)$. 
\begin{figure}[h]
\centering
 \includegraphics[width=.4\textwidth]{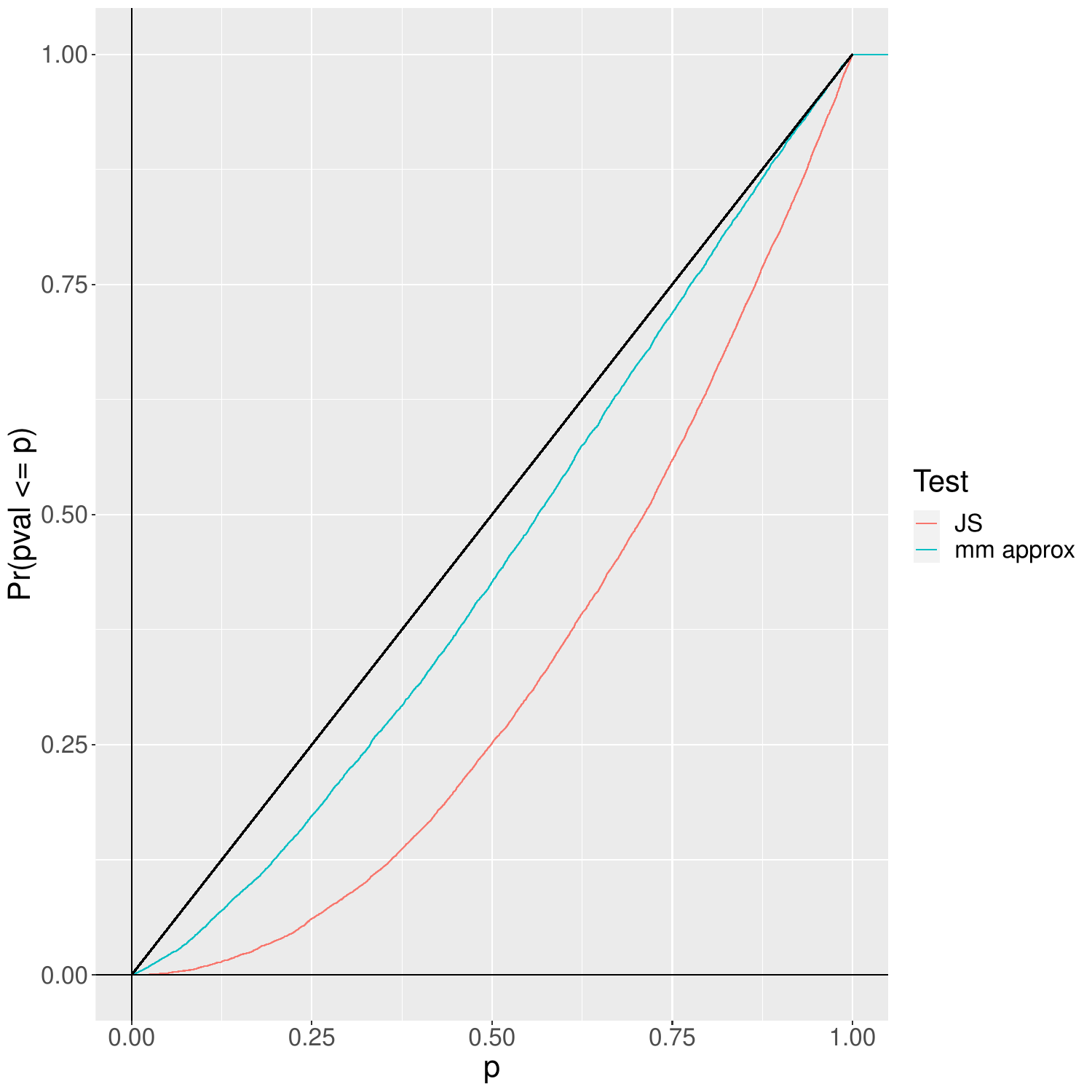} 
 \caption{Empirical cumulative distribution function plots of the $p$-values corresponding to the extended minimax optimal test and joint significance test based on 10,000 Monte Carlo samples from the bivariate normal distribution with $(\delta_x^*,\delta_y^*)=(0,0)$. The cumulative distribution function of the uniform[0,1] distribution is shown in black.}
 \label{fig:pval}
\end{figure}
We can see that this cumulative distribution function does indeed dominate that of the uniform[0,1] distribution, and yet it is dominated by that of the $p$-value corresponding to the joint-significance test. Having defined a $p$-value corresponding to the extended minimax optimal test, one may use these $p$-values to perform the Benjamini--Hochberg correction to many such independent tests to control the false discovery rate. When generating QQ-plots or volcano plots, no calibrations need to be made to the $p$-values. %; they will be valid uniformly over the composite null hypothesis parameter space. %They will also be conservative under and near the intersection null as illustrated in Figure~\ref{fig:pval}; however they will be less conservative than those corresponding to the joint significance or delta method tests. Therefore, the empirical $p$-value distribution will be a mixture of many uniform and super-uniform distributions. The degree of conservativeness of this distribution will depend on the context, but can likely be expected to be conservative in most cases. Such conservativeness would be the only source of bias in the sense of deviation from the uniform distribution. 
A major distinction of our approach to large-scale mediation hypothesis testing from existing approaches is that we make no assumptions about the distribution of the parameters under the null hypothesis and do not attempt to estimate such a distribution. As such, our $p$-values need to be valid under all parameter values under the composite null simultaneously, and we cannot recalibrate to get an exactly uniform distribution.

Since the extended minimax optimal test is conservative for non-unit fraction values of $\alpha$, the $p$-value will also be conservative. Thus, if one wishes to employ the standard minimax optimal test at a particular level of $\alpha$, one would reject based on the test defined by the rejection region $R_{mm}(\alpha)$ rather than comparing the $p$-value to $\alpha$, as these may not agree and the hypothesis test will be more powerful. However, as we discuss in the following section, one might prefer to instead define a test based on this $p$-value. %Similarly, the actual false discovery rate will be more conservative when applying the Benjamini--Hochberg procedure. %Sharpening the test for non-unit fraction $\alpha$ and hence for the corresponding $p$-value is an important topic for future research.

\begin{comment}
In principle, a $p$-value corresponding to the Bayes risk optimal test can be defined similarly. However, this definition requires the test to be defined on a continuum of $\alpha$, or at least a fine discretization of the unit interval. Given the computationally-intensive nature of solving for the rejection region of the Bayes risk optimal test, this may be overly burdensome to implement in practice, especially if close approximations are needed for small $p$-values if correcting for multiple testing.
\end{comment}

\section{DCTRS data analysis details}
\subsection{Table of baseline covariates}

\begin{table}[H]
\begin{center}
{\centering
\caption{Distribution of baseline covariates across studies}
\label{table:s1}
\begin{tabular}{llll}\\
\hline\hline
& \multicolumn{1}{c}{\textbf{\begin{tabular}[c]{@{}c@{}}Study 4155\\ (N=29)\end{tabular}}} & \multicolumn{1}{c}{\textbf{\begin{tabular}[c]{@{}c@{}}Study 8134\\ (N=32)\end{tabular}}} & \multicolumn{1}{c}{\textbf{\begin{tabular}[c]{@{}c@{}}Study 9212\\ (N=67)\end{tabular}}} \\
\hline
\textbf{Age (months)}                         &                                                                                          &                                                                                          &                                                                                          \\
\quad Mean (SD)                                     & \multicolumn{1}{c}{466 (117)}                                                            & \multicolumn{1}{c}{215 (28.8)}                                                           & \multicolumn{1}{c}{426 (106)}                                                            \\
\quad Median {[}Min, Max{]}                         & \multicolumn{1}{c}{444 {[}228, 768{]}}                                                   & \multicolumn{1}{c}{216 {[}168, 276{]}}                                                   & \multicolumn{1}{c}{408 {[}204, 732{]}}                                                   \\
\textbf{Sex}                                  &                                                                                          &                                                                                          &                                                                                          \\
\quad Female                                        & \multicolumn{1}{c}{5 (17.2\%)}                                                           & \multicolumn{1}{c}{11 (34.4\%)}                                                          & \multicolumn{1}{c}{16 (23.9\%)}                                                          \\
\quad Male                                          & \multicolumn{1}{c}{24 (82.8\%)}                                                          & \multicolumn{1}{c}{21 (65.6\%)}                                                          & \multicolumn{1}{c}{51 (76.1\%)}                                                          \\
\textbf{Race}                                 &                                                                                          &                                                                                          &                                                                                          \\
\quad Asian                                         & \multicolumn{1}{c}{3 (10.3\%)}                                                            & \multicolumn{1}{c}{2 (6.3\%)}                                                            & \multicolumn{1}{c}{4 (6.0\%)}                                                            \\
\quad Black or African American                     & \multicolumn{1}{c}{2 (6.9\%)}                                                            & \multicolumn{1}{c}{8 (25.0\%)}                                                           & \multicolumn{1}{c}{12 (17.9\%)}                                                          \\
\quad Unknown or not reported                       & \multicolumn{1}{c}{0 (0\%)}                                                              & \multicolumn{1}{c}{4 (12.5\%)}                                                           & \multicolumn{1}{c}{2 (3.0\%)}                                                            \\
\quad White                                         & \multicolumn{1}{c}{24 (82.8\%)}                                                          & \multicolumn{1}{c}{18 (56.3\%)}                                                          & \multicolumn{1}{c}{49 (73.1\%)}                                                          \\
\textbf{Education (years)}                    &                                                                                          &                                                                                          &                                                                                          \\
\quad Mean (SD)                                     & \multicolumn{1}{c}{12.3 (1.97)}                                                          & \multicolumn{1}{c}{11.2 (1.47)}                                                          & \multicolumn{1}{c}{11.4 (2.24)}                                                          \\
\quad Median {[}Min, Max{]}                         & \multicolumn{1}{c}{12.0 {[}9.00, 17.0{]}}                                                & \multicolumn{1}{c}{11.0 {[}8.00, 16.0{]}}                                                & \multicolumn{1}{c}{11.0 {[}2.00, 17.0{]}}                                                \\
\textbf{WAIS working memory} &                                                                                          &                                                                                          &                                                                                          \\
\textbf{digit span test} &                                                                                          &                                                                                          &                                                                                          \\
\quad Mean (SD)                                     & \multicolumn{1}{c}{12.1 (4.55)}                                                          & \multicolumn{1}{c}{11.7 (3.29)}                                                          & \multicolumn{1}{c}{14.8 (3.95)}                                                          \\
\quad Median {[}Min, Max{]}                         & \multicolumn{1}{c}{12.0 {[}3.00, 23.0{]}}                                                   & \multicolumn{1}{c}{11.0 {[}7.00, 18.0{]}}                                                & \multicolumn{1}{c}{15.0 {[}6.0, 23.0{]}}                                                   \\
\quad Missing                         &
\multicolumn{1}{c}{2 (6.9\%)}      &
\multicolumn{1}{c}{2 (6.3\%)}      &
\multicolumn{1}{c}{0 (0\%)}      \\
\textbf{Rosenberg self-esteem scale}                   &                                                                                          &                                                                                          &                                                                                          \\
\quad Mean (SD)                                     & \multicolumn{1}{c}{31.5 (7.61)}                                                          & \multicolumn{1}{c}{33.4 (6.63)}                                                          & \multicolumn{1}{c}{32.0 (7.65)}                                                          \\
\quad Median {[}Min, Max{]}                         & \multicolumn{1}{c}{30.5 {[}12.0, 50.0{]}}                                                & \multicolumn{1}{c}{34.0 {[}19.0, 48.0{]}}                                                   & \multicolumn{1}{c}{34.0 {[}10.0, 48.0{]}}                                                   \\
\quad Missing                         &
\multicolumn{1}{c}{1 (3.4\%)}      &
\multicolumn{1}{c}{0 (0\%)}      &
\multicolumn{1}{c}{0 (0\%)}    \\
\hline
\\
\end{tabular}}
\end{center}
\end{table}

\subsection{Variable selection, causal assumptions, and statistical models}
Although many additional measures of cognition and functioning at baseline are available in the data, we only included baseline measures of the WAIS working memory digit span test and the Rosenberg self-esteem scale %social behavioral scale and WCST non-perseverative errors 
for three reasons. The first is that since we are only concerned with controlling for confounding of the effect of working memory at the end of treatment on self-esteem at the end of follow-up, their corresponding baseline measures would seem to be the most relevant in terms of capturing baseline common causes from the cognition and functioning/quality-of-life domains. The second reason is that the sample size is not very large---certainly not large enough to include all baseline cognition and functioning measures---and so we favored a more parsimonious model that adjusted for the most relevant baseline measures of cognition and functioning/quality-of-life. The third is that many of these measures are not observed in all three studies. As is always the case with mediation analysis (even in the context of randomized experiments), our analyses are subject to bias due to residual confounding. Given that the purpose of this analysis is to illustrate our methodology, a full sensitivity analysis is beyond the scope of this article, though methods for such sensitivity analyses are available \citep{imai2010general,ding2016sharp}, and will be important to incorporate into our testing procedures in future work. However, we do explore the impact of including additional covariates in $\bm{C}$ in Section \ref{subsec:supp:DCTRS}.

Another identification assumption for the NIE is that no common cause of $M$ and $Y$ is affected by $A$, which, as is always the case in mediation analysis, may be violated in our study. However, this issue is also beyond the scope of this article. To address exposure-induced confounding, one could alternatively estimate partial identification bounds \citep{miles2017partial}, or focus on a different target estimand, such as the path-specific effect not through the exposure-induced confounder \citep{avin2005identifiability,miles2017quantifying,miles2020semiparametric}. Both approaches require observation of all exposure-induced confounders, say $\bm L$. The latter additionally requires the effect of $A$ on $\bm L$ to itself have no residual confounding (which is guaranteed when $A$ is randomized), and no unobserved confounding of the effect of $\bm L$ on $M$ and $Y$. For a more detailed discussion on the assumptions needed to identify this path-specific effect, see \citet{avin2005identifiability} and \citet{ miles2017quantifying, miles2020semiparametric}.

The DCTRS analysis involved pooling data from three different trials. Formally, this is justified if when the study ID indicators are included in $\bm C$, the causal assumptions (consistency, positivity, and no unobserved confounding assumptions) needed for identification of the NIE are satisfied, and if the statistical models for $E(Y\mid A, M, \bm{C})$ and $E(M\mid A, \bm{C})$ are correctly specified. Model \ref{eq:model2bissm}, which only contains main effect terms in study ID indicators, amounts to an assumption that the effect of the exposure on the mediator is homogeneous across studies. Likewise, model \ref{eq:model1sm} amounts to an assumption that the effects of the exposure and the mediator on the outcome are homogeneous across studies. 
\begin{comment}
Therefore, for our analysis, we began with augmented version of the models \ref{eq:model1sm} and \ref{eq:model2bissm}, with study ID indicators included in $\bm C$. In particular, we tested for interactions between study ID dummy variables and $A$ and between study ID dummy variables and $M$ in the outcome model, and interactions between study ID dummy variables and $A$ in the mediator model. The interaction terms in the mediator model were significant, while those in the outcome model were not. Thus, in the analysis, we used model \ref{eq:model2bissm} and an augmented version of model \ref{eq:model1sm} including the study ID--exposure interaction terms:
\begin{align*}
    E(M\mid A=a, \bm{C}=\bm{c}) &= \beta_0 + \beta_1a + \beta_2 I(\text{study}=8134)a + \beta_3I(\text{study}=9212)a + \bm{\beta_4}^\top \bm{c}.
\end{align*}
Substituting these models into the mediation formula yields \[\mathrm{NIE} = (\theta_2 + \theta_3)\{\beta_1+\beta_2\text{Pr}(\text{study}=8134)+\beta_3\text{Pr}(\text{study}=9212)\}.\]
Thus, we have $\delta_x = \theta_2 + \theta_3$ and $\delta_y = \beta_1+\beta_2\text{Pr}(\text{study}=8134)+\beta_3\text{Pr}(\text{study}=9212)$, and by treating the study sample sizes (and thereby their respective probabilities) as fixed, we simply substitute regression coefficient estimates to attain the corresponding estimates $\hat{\delta}_x$ and $\hat{\delta}_y$. This demonstrates that the framework of the problem we consider can accommodate exposure-covariate interaction in the mediator model.
\end{comment}

\subsection{Plots of the bivariate test statistic from the DCTRS data analysis on the rejection regions corresponding to various tests}

\begin{figure}[H]
\centering
 \includegraphics[width=.95\textwidth]{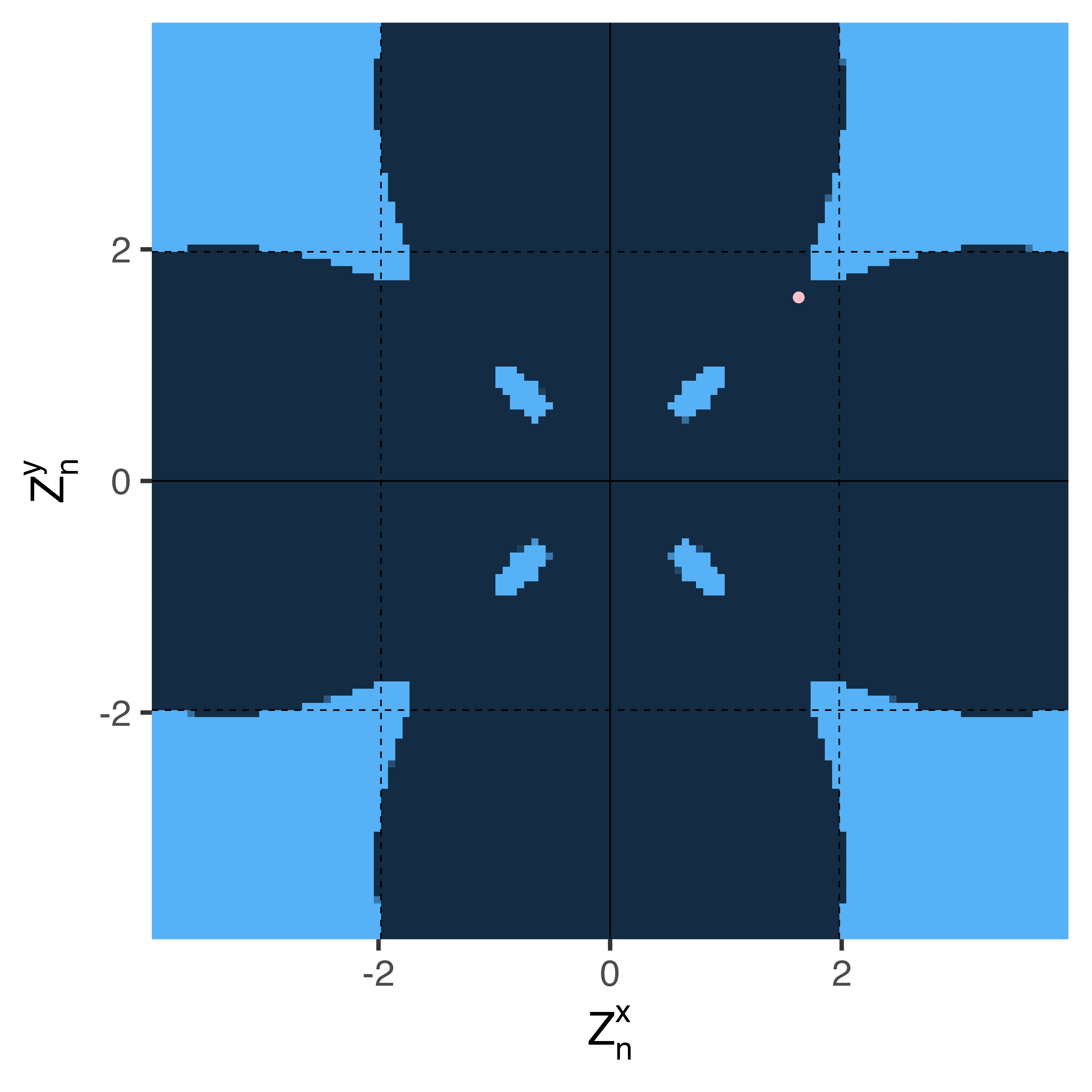} 
 \caption{Plot of the test statistic from the DCTRS data analysis on the rejection region of the Bayes risk optimal test with 0-1 loss.}
 \label{fig:scatter_dctrs_bro01}
\end{figure}

\begin{figure}[H]
\centering
 \includegraphics[width=.95\textwidth]{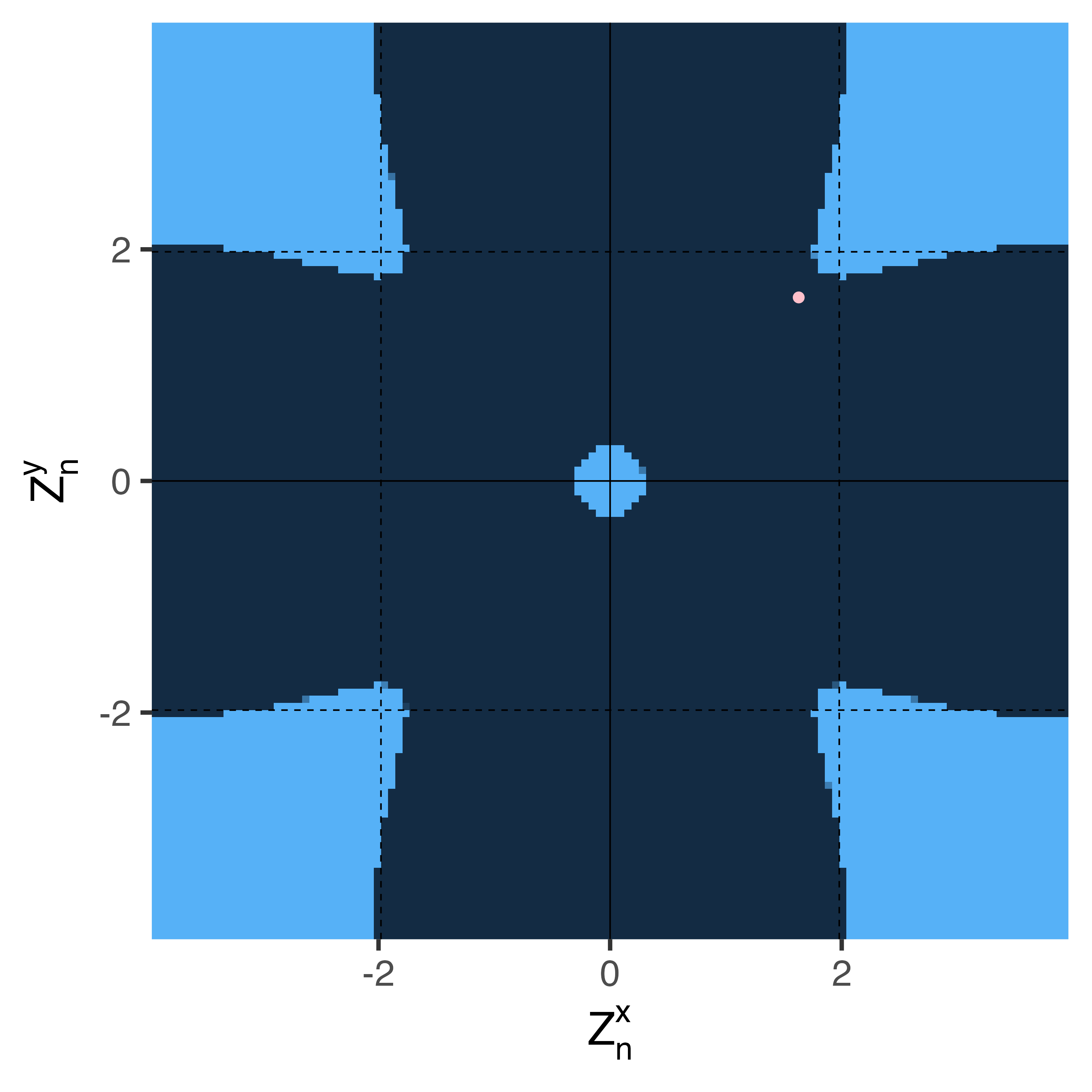} 
 \caption{Plot of the test statistic from the DCTRS data analysis on the rejection region of the Bayes risk optimal test with quadratic loss.}
 \label{fig:scatter_dctrs_bro_quad_}
\end{figure}

\begin{figure}[H]
\centering
 \includegraphics[width=.95\textwidth]{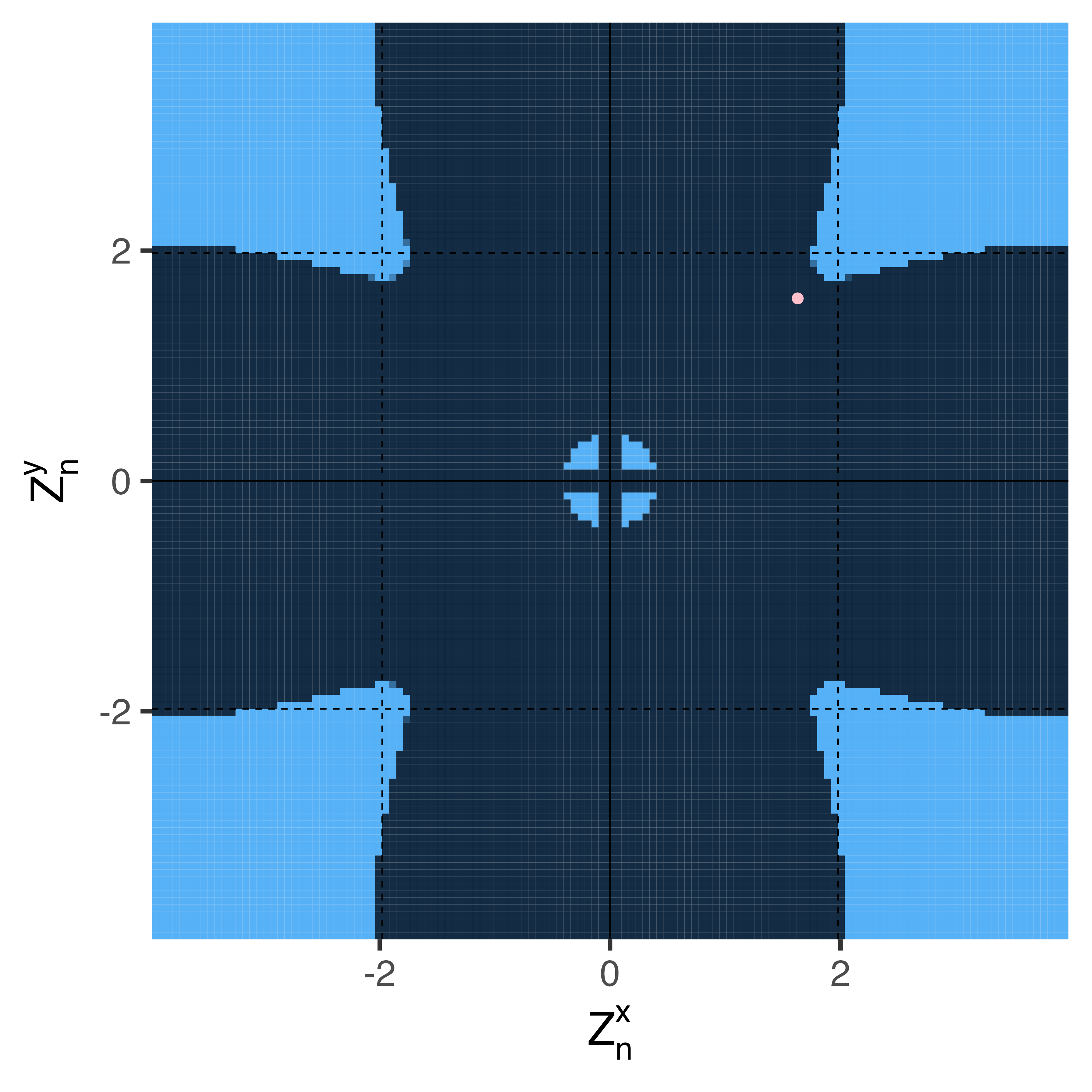} 
 \caption{Plot of the test statistic from the DCTRS data analysis on the rejection region of the constrained Bayes risk optimal test with quadratic loss and $d=0.1$.}
 \label{fig:scatter_dctrs_bro_quad_constr}
\end{figure}

\begin{comment}
\begin{figure}[H]
\centering
 \includegraphics[width=.95\textwidth]{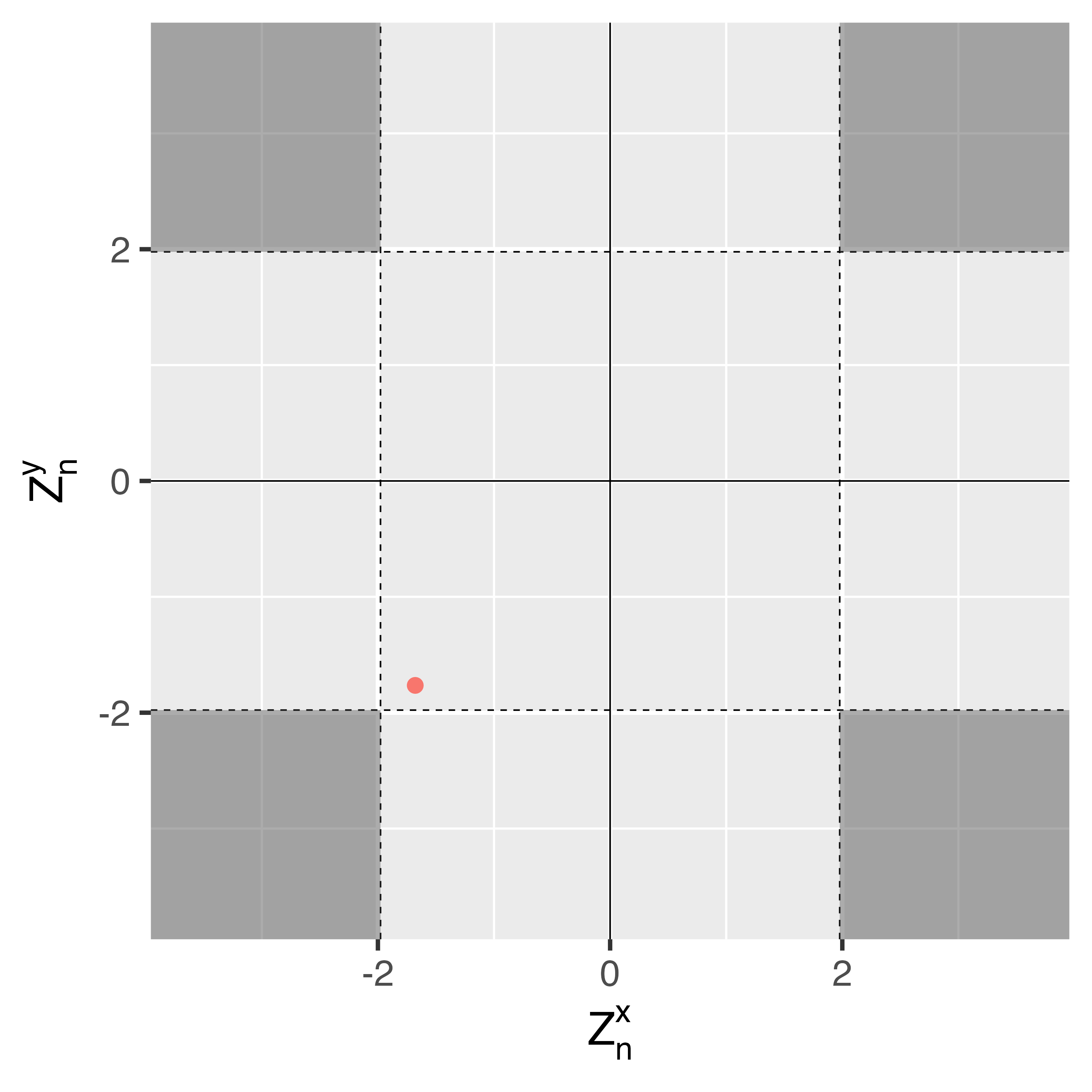} 
 \caption{Plot of the test statistic from the DCTRS data analysis on the rejection region of the joint significance test.}
 \label{fig:scatter_dctrs_js}
\end{figure}
\end{comment}

\begin{figure}[H]
\centering
 \includegraphics[width=.95\textwidth]{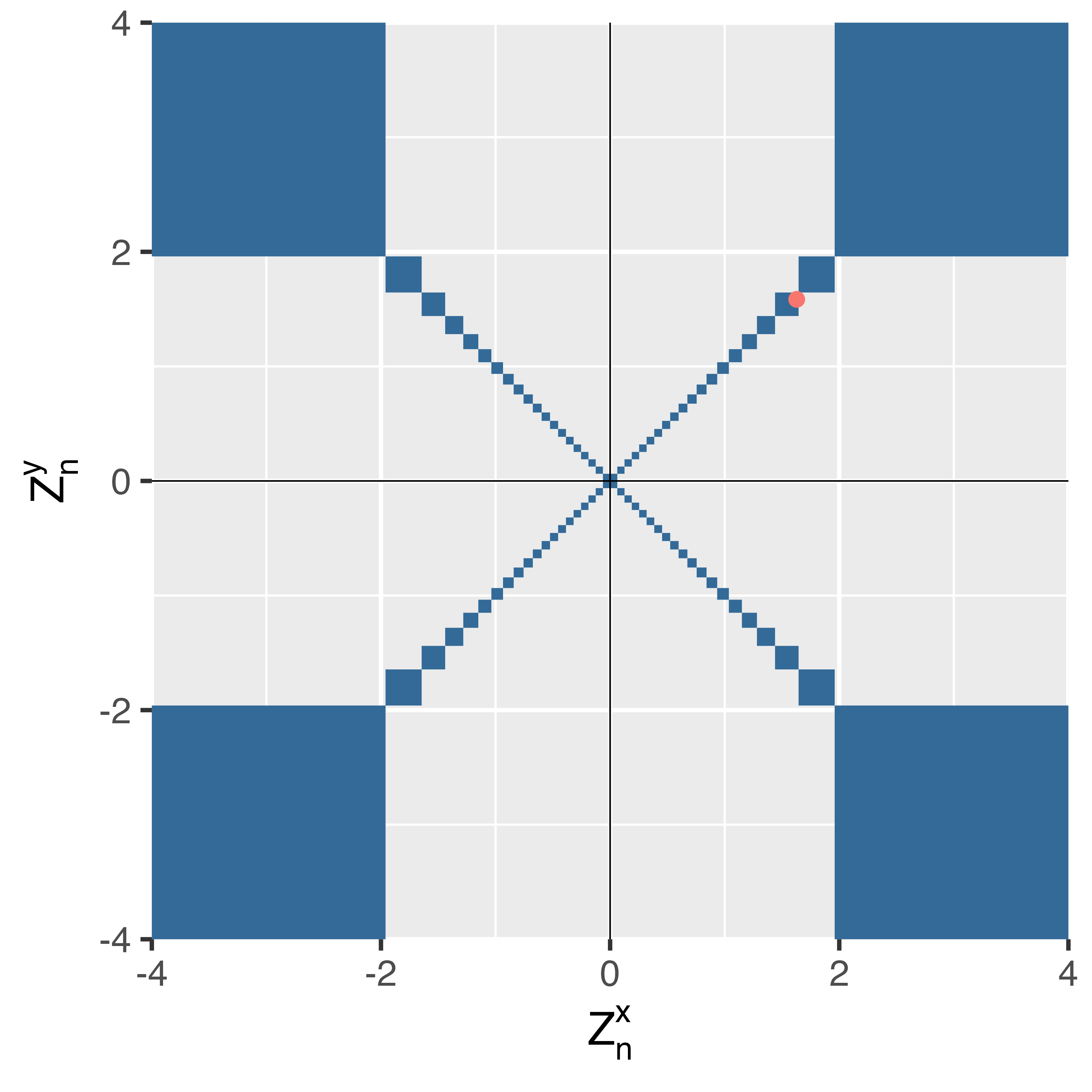} 
 \caption{Plot of the test statistic from the DCTRS data analysis on the rejection region of the minimax optimal test.}
 \label{fig:scatter_dctrs_mmo}
\end{figure}

\begin{figure}[H]
\centering
 \includegraphics[width=.95\textwidth]{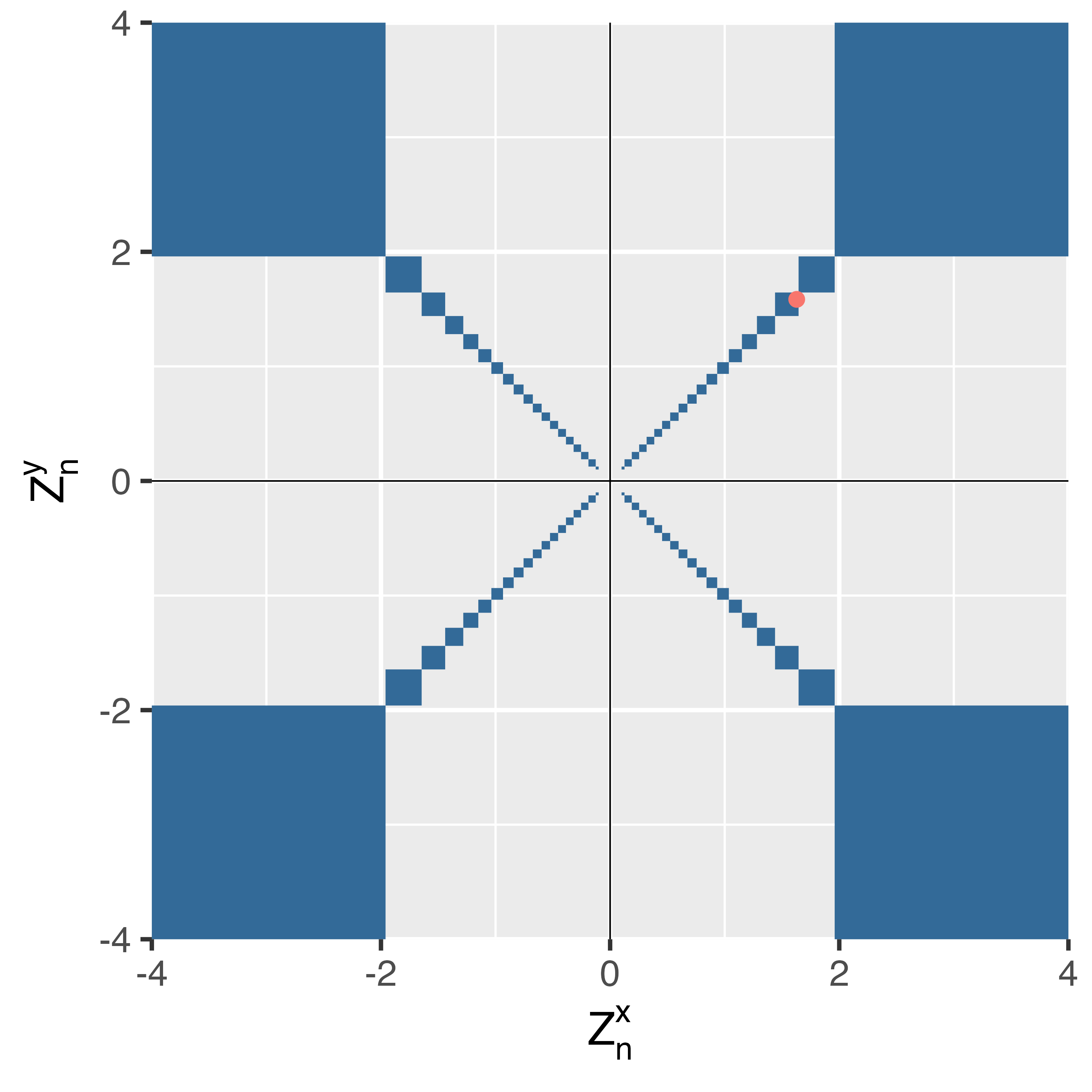} 
 \caption{Plot of the test statistic from the DCTRS data analysis on the rejection region of the truncated minimax optimal test with $d=0.1$.}
 \label{fig:scatter_dctrs_mmo_trunc}
\end{figure}

\subsection{Analysis results by study}
\begin{table}[H]
\begin{center}
{\centering
\caption{Hypothesis test results for the NIE in Study 4155}
\label{table:dctrs4155}
\begin{tabular}{lc r@{.}l}
\\
\hline\hline
Hypothesis test & Reject &\multicolumn{2}{c}{$p$-value}\\
\hline
Bayes risk optimal, 0-1 loss & No & \multicolumn{2}{c}{Undefined} \\
Bayes risk optimal, quadratic loss & No & \multicolumn{2}{c}{Undefined} \\
Bayes risk optimal, quadratic loss, constrained & No & \multicolumn{2}{c}{Undefined} \\
Minimax optimal & No & 0&164 \\
Minimax optimal, truncated & No & 0&164 \\
Minimax optimal, $p$-value & No & 0&164 \\
Delta method & No & 0&223 \\
Joint significance & No & 0&164 \\
van Garderen and van Giersbergen    &   No  &   \multicolumn{2}{c}{Undefined} \\
\hline
\\
\end{tabular}}
\end{center}
\end{table}
\begin{comment}
\begin{table}[H]
\begin{center}
{\centering
\caption{Hypothesis test results for the NIE in Study 4155}
\label{table:dctrs4155}
\begin{tabular}{lc r@{.}l}
\\
\hline\hline
Hypothesis test & Reject &\multicolumn{2}{c}{$p$-value}\\
\hline
Bayes risk optimal, 0-1 loss & No & \multicolumn{2}{c}{Undefined} \\
Bayes risk optimal, quadratic loss & No & \multicolumn{2}{c}{Undefined} \\
Bayes risk optimal, quadratic loss, constrained & No & \multicolumn{2}{c}{Undefined} \\
Delta method & No & 0&429 \\
Joint significance & No & 0&386 \\
Minimax optimal & No & 0&386 \\
Minimax optimal, truncated & No & 0&386 \\
Minimax optimal, $p$-value & No & 0&386 \\
van Garderen and van Giersbergen    &   No  &   \multicolumn{2}{c}{Undefined} \\
\hline
\\
\end{tabular}}
\end{center}
\end{table}
\end{comment}

\begin{table}[H]
\begin{center}
{\centering
\caption{Hypothesis test results for the NIE in Study 8134}
\label{table:dctrs8134}
\begin{tabular}{lc r@{.}l}
\\
\hline\hline
Hypothesis test & Reject &\multicolumn{2}{c}{$p$-value}\\
\hline
Bayes risk optimal, 0-1 loss & No & \multicolumn{2}{c}{Undefined} \\
Bayes risk optimal, quadratic loss & No & \multicolumn{2}{c}{Undefined} \\
Bayes risk optimal, quadratic loss, constrained & Yes & \multicolumn{2}{c}{Undefined} \\
Minimax optimal & No & 0&303 \\
Minimax optimal, truncated & No & 0&303 \\
Minimax optimal, $p$-value & No & 0&303 \\
Delta method & No & 0&880 \\
Joint significance & No & 0&859 \\
van Garderen and van Giersbergen    &   No  &   \multicolumn{2}{c}{Undefined} \\
\hline
\\
\end{tabular}}
\end{center}
\end{table}

\begin{comment}
    \begin{table}[H]
\begin{center}
{\centering
\caption{Hypothesis test results for the NIE in Study 8134}
\label{table:dctrs8134}
\begin{tabular}{lc r@{.}l}
\\
\hline\hline
Hypothesis test & Reject &\multicolumn{2}{c}{$p$-value}\\
\hline
Bayes risk optimal, 0-1 loss & No & \multicolumn{2}{c}{Undefined} \\
Bayes risk optimal, quadratic loss & No & \multicolumn{2}{c}{Undefined} \\
Bayes risk optimal, quadratic loss, constrained & No & \multicolumn{2}{c}{Undefined} \\
Delta method & No & 0&822 \\
Joint significance & No & 0&815 \\
Minimax optimal & No & 0&746 \\
Minimax optimal, truncated & No & 0&746 \\
Minimax optimal, $p$-value & No & 0&746 \\
van Garderen and van Giersbergen    &   No  &   \multicolumn{2}{c}{Undefined} \\
\hline
\\
\end{tabular}}
\end{center}
\end{table}
\end{comment}

\begin{table}[H]
\begin{center}
{\centering
\caption{Hypothesis test results for the NIE in Study 9212}
\label{table:dctrs9212}
\begin{tabular}{lc r@{.}l}
\\
\hline\hline
Hypothesis test & Reject &\multicolumn{2}{c}{$p$-value}\\
\hline
Bayes risk optimal, 0-1 loss & No & \multicolumn{2}{c}{Undefined} \\
Bayes risk optimal, quadratic loss & No & \multicolumn{2}{c}{Undefined} \\
Bayes risk optimal, quadratic loss, constrained & No & \multicolumn{2}{c}{Undefined} \\
Minimax optimal & No & 0&343 \\
Minimax optimal, truncated & No & 0&343 \\
Minimax optimal, $p$-value & No & 0&343 \\
Delta method & No & 0&794 \\
Joint significance & No & 0&749 \\
van Garderen and van Giersbergen    &   No  &   \multicolumn{2}{c}{Undefined} \\
\hline
\\
\end{tabular}}
\end{center}
\end{table}

\begin{comment}
    \begin{table}[H]
\begin{center}
{\centering
\caption{Hypothesis test results for the NIE in Study 9212}
\label{table:dctrs9212}
\begin{tabular}{lc r@{.}l}
\\
\hline\hline
Hypothesis test & Reject &\multicolumn{2}{c}{$p$-value}\\
\hline
Bayes risk optimal, 0-1 loss & No & \multicolumn{2}{c}{Undefined} \\
Bayes risk optimal, quadratic loss & No & \multicolumn{2}{c}{Undefined} \\
Bayes risk optimal, quadratic loss, constrained & No & \multicolumn{2}{c}{Undefined} \\
Delta method & No & 0&291 \\
Joint significance & No & 0&192 \\
Minimax optimal & No & 0&193 \\
Minimax optimal, truncated & No & 0&192 \\
Minimax optimal, $p$-value & No & 0&192 \\
van Garderen and van Giersbergen    &   No  &   \multicolumn{2}{c}{Undefined} \\
\hline
\\
\end{tabular}}
\end{center}
\end{table}
\end{comment}

\subsection{DCTRS analysis with additional baseline covariates}
\label{subsec:supp:DCTRS}
To assess the sensitivity to the choice of adjustment variables, we also conducted the analysis with additional covariates. In particular, we fit the linear models for the outcome and mediator including all variables available at baseline (after a round of imputation using random forest to fill in missing baseline covariates) using LASSO. We then augmented the adjustment set used in the initial analysis with the union of covariates selected in both LASSO fits, as motivated by the post-double-selection method of \cite{belloni2014inference} for estimating average treatment effects. In our case, we only need to control for mediator-outcome confounding since the treatment is randomized, hence the determination of which variables to add to the adjustment set by models for the mediator and the outcome. %When using ``lambda.1se'' to tune the LASSO in glmnet (i.e., the most regularized model such that error is within one standard error of the minimum cross-validated MSE), no variables were selected that were not already included in our original adjustment set. 
When using ``lambda.min'' to tune the LASSO in glmnet (i.e., minimizing cross-validated error MSE), the selected variables consisted of the following baseline measurements: study ID, marital status, Rosenberg self-esteem scale (self-confirmation factor and total scores), Social Behaviour Scale (SBS) items: (attention-seeking behavior, coherence of conversation, concentration, depression, hostility/friendliness, personal appearance and hygiene, laughing and talking to self, socially unacceptable manners or habits, other behaviors that impede progress, panic attacks and phobias, slowness, verbal fluency test using the letters F, A, and S (FAS): total number of correct responses, trailmaking test part A (TMTA) (paper \& pencil): number of errors, trailmaking test part B (TMTB) (paper \& pencil): number of errors, Wechsler Adult Intelligence Scale (WAIS) (working memory digit span test raw score, picture completion raw and scaled scores, vocabulary raw score), Wisconsin Card Sorting Test (WCST): categories achieved, Positive and Negative Syndrome Scale (PANSS) items: P1 Delusions, P4 Excitement, P6 Suspiciousness, N5 Difficulty in Abstract Thinking, G2 Anxiety, G3 Guilt Feelings, G9 Unusual Thought Content, G12 Lack of Judgment and Insight, G16 Active Social Avoidance.

Augmenting our adjustment set with these variables yielded the results in Table \ref{table:augmented}. Evidently, the results are indeed sensitive to which variables we adjust for. Rejection by more of the tests in this case could be a result of instability due to the number of covariates being adjusted for being large relative to the sample size. On the other hand, the fact that some of the tests failed to reject in the previous analysis could be a result of residual confounding bias. We cannot be sure of which is the case. However, it is certainly true that the validity of our proposed tests hinge on the validity of the identification assumptions.
\begin{table}
\begin{center}
{\centering
\caption{Hypothesis test results for the NIE when using the augmented set of adjustment variables}
\label{table:augmented}
\begin{tabular}{lc r@{.}l}
\\
\hline\hline
Hypothesis test & Reject &\multicolumn{2}{c}{$p$-value}\\
\hline
Bayes risk optimal, 0-1 loss & Yes & \multicolumn{2}{c}{Undefined} \\
Bayes risk optimal, quadratic loss & Yes & \multicolumn{2}{c}{Undefined} \\
Bayes risk optimal, quadratic loss, constrained & Yes & \multicolumn{2}{c}{Undefined} \\
Minimax optimal & Yes & 0&034 \\
Minimax optimal, truncated & Yes & 0&034 \\
Minimax optimal, $p$-value & Yes & 0&034 \\
Delta method & No & 0&127 \\
Joint significance & Yes & 0&040 \\
van Garderen and van Giersbergen    &   Yes  &   \multicolumn{2}{c}{Undefined} \\
\hline
\\
\end{tabular}}
\end{center}
\end{table}

To demonstrate that adjusting for more covariates does not necessarily make tests more likely to agree, we also present results from an unadjusted analysis, i.e., where we do not adjust for any baseline covariates. Results from the unadjusted analysis in Table S6 show that all tests agree, whereas adjusting for the covariate set used for the analysis in the main article yields test results that disagree. 
\begin{table}
\begin{center}
{\centering
\caption{Hypothesis test results for the NIE when not adjusting for any baseline covariates}
\label{table:augmented}
\begin{tabular}{lc r@{.}l}
\\
\hline\hline
Hypothesis test & Reject &\multicolumn{2}{c}{$p$-value}\\
\hline
Bayes risk optimal, 0-1 loss & No & \multicolumn{2}{c}{Undefined} \\
Bayes risk optimal, quadratic loss & No & \multicolumn{2}{c}{Undefined} \\
Bayes risk optimal, quadratic loss, constrained & No & \multicolumn{2}{c}{Undefined} \\
Minimax optimal & No & 0&498 \\
Minimax optimal, truncated & No & 0&498 \\
Minimax optimal, $p$-value & No & 0&498 \\
Delta method & No & 0&523 \\
Joint significance & No & 0&498 \\
van Garderen and van Giersbergen    &   No  &   \multicolumn{2}{c}{Undefined} \\
\hline
\\
\end{tabular}}
\end{center}
\end{table}
Thus, we see empirically across Tables 2, S5, and S6 that adjusting for more covariates does not generally mean that tests are more or less likely to agree.

\begin{comment}
    \begin{table}
\begin{center}
{\centering
\caption{Hypothesis test results for the NIE when using the augmented set of adjustment variables}
\label{table:augmented}
\begin{tabular}{lc r@{.}l}
\\
\hline\hline
Hypothesis test & Reject &\multicolumn{2}{c}{$p$-value}\\
\hline
Bayes risk optimal, 0-1 loss & No & \multicolumn{2}{c}{Undefined} \\
Bayes risk optimal, quadratic loss & No & \multicolumn{2}{c}{Undefined} \\
Bayes risk optimal, quadratic loss, constrained & No & \multicolumn{2}{c}{Undefined} \\
Delta method & No & 0&739 \\
Joint significance & No & 0&732 \\
Minimax optimal & No & 0&732 \\
Minimax optimal, truncated & No & 0&732 \\
Minimax optimal, $p$-value & No & 0&732 \\
van Garderen and van Giersbergen    &   No  &   \multicolumn{2}{c}{Undefined} \\
\hline
\\
\end{tabular}}
\end{center}
\end{table}
\end{comment}

%sbs\_hostilityfriendliness, sbs\_appearancehygiene, sbs\_mannershabits, sbs\_panicphobias, sbs\_total, lns\_raw, pos\_p2, gps\_g7, gps\_g10, tmtb\_errors, wais\_digsym\_scaled, study\_id, sbs\_inapp\_conversation, sbs\_posturing, sbs\_slowness, wcst\_percent\_conceptual, wcst\_categories, wcst\_nonpersev\_errors, gps\_g12, gps\_g13, tmtb\_comp\_raw.

\section{Tests of products of more than two coefficients}
\label{sec:extensions}
We now consider the more general hypothesis testing setting of $H_0^{\ell}:``\prod_{j=1}^{\ell}\delta_j=0"$ against $H_1^\ell: ``\prod_{j=1}^\ell \delta_j \neq 0"$
 when there exists an asymptotically normal estimator $\bm{\hat{\delta}}:=(\hat{\delta}_1,\ldots,\hat{\delta}_{\ell})^{\top}$ of $\bm{\delta}:=(\delta_1, \ldots, \delta_{\ell})^{\top}$ such that the convergence $\sqrt{n}\bm{\Sigma_n}^{-1/2}\left(\bm{\hat{\delta}} - \bm{\delta}\right)\rightsquigarrow\mathcal{N}\left(\bm{0_{\ell}}, \bm{I_{\ell}}\right)$ is uniform in $\bm{\delta}\in\mathbb{R}^{\ell}$, where $\bm{\Sigma_n}$ is again a consistent estimator of the asymptotic covariance matrix, $\bm{0_{\ell}}$ is the $\ell$-vector of zeros, and $\bm{I_{\ell}}$ is the $\ell\times\ell$ identity matrix. This setting arises in linear structural equation modeling when one wishes to test the effect of an exposure along a chain of intermediate variables. Unfortunately, such effects are seldom identifiable in the causal mediation framework; however, this setting can arise in other applications as well. For instance, given multiple candidate instrumental variables, \citet{kang2020two} characterize the null hypothesis of no treatment effect and one valid instrument that is uncorrelated with the other candidates to be the product of multiple coefficients equaling zero.

The over-conservativeness and underpoweredness problems faced by the traditional tests of products of coefficients are in fact exacerbated in higher dimensions. For instance, the generalization of the joint significance test to $\ell$ rejects when the $\ell$ Wald tests corresponding to %$H_{1,0}, H_{2,0}, \ldots, H_{\ell,0}$ 
``$\delta_j=0$'' against ``$\delta_j\neq 0$'' reject %, where $H_{i,0}: \delta_i=0$ for all $i=1,\ldots,\ell$. 
the nulls for their corresponding alternatives for all $j=1,\ldots,\ell$. 
Thus, under $\bm{\delta}=\bm{0_{\ell}}$, the rejection probability of the joint significance test will be $\alpha^{\ell}$. For $\alpha=0.05$ and $\ell=3$, for example, this will be 0.000125.

The minimax optimal test lacks an obvious unique natural extension to the setting with more than two coefficients. Nevertheless, for unit fraction $\alpha$, we can prove the existence of deterministic similar tests that are symmetric with respect to negations. For $\ell=3$, these tests can be constructed using Latin squares of order $\alpha^{-1}$. If the Latin square is totally symmetric, then the test will also be symmetric with respect to permutations. We only provide the result for $\ell=3$; however, we conjecture that these can be readily generalized to higher dimensions using Latin hypercubes, provided they exist for the choice of $\ell$ and $\alpha$.

Given a Latin square of order $\alpha^{-1}$, $\bm{A}$, let $1,\ldots,\alpha^{-1}$ be the set of symbols populating $\bm{A}$, and let $A_{i,j}$ denote the symbol in the $i$-th row and $j$-th column of $\bm{A}$. Define $c_k:= \Phi^{-1}\{(1 + k\alpha)/2\}$ for all $k=0,\ldots,\alpha^{-1}$. We define the rejection region $R^{\dag}\in\mathbb{R}^3$ corresponding to $\bm{A}$ to be the following union of open hyperrectangles in the nonnegative orthant of $\mathbb{R}^3$:
\begin{equation}
  \label{eq:Latin:square}
R^{\dag}:=\bigcup_{i=1}^{\alpha^{-1}}\bigcup_{j=1}^{\alpha^{-1}}(c_{i-1},c_i)\times(c_{j-1},c_j)\times(c_{A_{i,j}-1},c_{A_{i,j}}).
\end{equation}
The test corresponding to $R^{\dag}$ rejects $H_0^{\ell}$ for $H_1^{\ell}$ if $(\lvert Z_*^1\rvert, \lvert Z_*^2\rvert, \lvert Z_*^3\rvert)^{\top}\in R^{\dag}$. 

\begin{theorem}
\label{thm:latin}
For any unit fraction $\alpha\in(0,1)$ and Latin square of order $\alpha^{-1}$, $\bm{A}$, the corresponding test defined by the rejection region $R^{\dag}$ in \eqref{eq:Latin:square} is a similar test of $H_0^3$ against $H_1^3$ that is symmetric with respect to negations. If $\bm{A}$ is totally symmetric, then the test is also symmetric with respect to permutations.
%If $\alpha$ is a unit fraction, there exists a level-$\alpha$ deterministic similar test of $H_0^3$ that is symmetric with respect to negations. If a totally symmetric Latin square of order $\alpha^{-1}$ exists, then there exists such a test that is also symmetric with respect to permutations.
\end{theorem}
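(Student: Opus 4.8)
The plan is to reduce everything to two elementary facts. The first is that the intervals $(c_{k-1},c_k)$, $k=1,\dots,\alpha^{-1}$, partition $(0,\infty)$ up to a Lebesgue-null set and each carries mass exactly $\alpha$ under the law of $\lvert W\rvert$ when $W\sim\mathcal{N}(0,1)$: since $\Phi(c_k)=(1+k\alpha)/2$ we get $\mathrm{Pr}(\lvert W\rvert<c_k)=2\Phi(c_k)-1=k\alpha$, hence $\mathrm{Pr}(\lvert W\rvert\in(c_{k-1},c_k))=\alpha$ (note $c_0=0$, $c_{\alpha^{-1}}=\infty$, so the intervals really do exhaust $(0,\infty)$). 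The second is the defining incidence property of a Latin square $A$, viewed as the set of triples $T:=\{(i,j,A_{i,j}):1\le i,j\le\alpha^{-1}\}$: fixing any two of the three coordinates of a triple determines the third uniquely — each cell $(i,j)$ carries exactly one symbol (trivially, since $A$ is a function), each symbol $k$ occurs exactly once in each row $i$, and each symbol $k$ occurs exactly once in each column $j$. Symmetry with respect to negations is then immediate, since the decision depends on $(Z_*^1,Z_*^2,Z_*^3)$ only through $(\lvert Z_*^1\rvert,\lvert Z_*^2\rvert,\lvert Z_*^3\rvert)^\top$.

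For similarity I would argue as follows. Fix $m\in\{1,2,3\}$ and a point $(\delta_1^*,\delta_2^*,\delta_3^*)$ with $\delta_m^*=0$; it suffices to show the rejection probability there is $\alpha$, since every point of $H_0^3$ has at least one vanishing coordinate. Because the limiting covariance is $I_3$, the coordinate $Z_*^m\sim\mathcal{N}(0,1)$ is independent of the other two, so I condition on the latter. Almost surely the absolute values of the other two coordinates fall in a unique pair of the open intervals above, say with indices $(a,b)$; by the incidence property of $T$ — each cell determines a unique symbol (relevant when $m=3$), each (column, symbol) pair a unique row (relevant when $m=1$), each (row, symbol) pair a unique column (relevant when $m=2$) — the set of values of $\lvert Z_*^m\rvert$ for which $(\lvert Z_*^1\rvert,\lvert Z_*^2\rvert,\lvert Z_*^3\rvert)^\top$ lies in the nonnegative-orthant region $R^{\dag}$ is exactly one interval $(c_{c-1},c_c)$. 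Hence the conditional rejection probability equals $\mathrm{Pr}(\lvert Z_*^m\rvert\in(c_{c-1},c_c))=\alpha$ whatever $c$ is, and integrating over the (absolutely continuous) joint law of the remaining two coordinates gives unconditional probability $\alpha$. Ranging over $m=1,2,3$ exhausts $H_0^3$, so the test is similar. Unlike the two-coordinate proof, no completeness/Basu-type step is needed here: the conditional probability is computed directly.

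Finally, if $A$ is totally symmetric then $T$ is invariant under all $3!$ permutations of its coordinates, so $R^{\dag}$ is invariant under the corresponding permutations of the coordinates of $\mathbb{R}^3$, and therefore so is the decision rule — i.e., the test is symmetric with respect to permutations. The only care needed is the bookkeeping around measure-zero boundary sets $\bigcup_k\{\lvert z\rvert=c_k\}$ (so that the "almost surely" in the conditioning step is legitimate) and, more substantively, phrasing the Latin-square axiom in the "fix two coordinates of a triple, recover the third" form that makes the conditioning argument run uniformly in $m$; once that translation is in hand the probability computation is routine, and I do not anticipate a serious obstacle.
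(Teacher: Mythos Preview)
Your proposal is correct and follows essentially the same approach as the paper: condition on the two coordinates whose means may be nonzero, use the Latin-square structure to identify the unique interval that the remaining (mean-zero) coordinate's absolute value must hit, and conclude the conditional rejection probability is exactly $\alpha$. The paper packages the ``fix two coordinates, recover the third'' step via the orthogonal-array/conjugate formalism (introducing the conjugate $A^{132}$ to handle the $m=2$ case), whereas you phrase it directly as the incidence property of the triple set $T$; these are equivalent, and your unified treatment of all three cases is arguably cleaner.
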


For the test to also be consistent, $\bm{A}$ must satisfy $A_{\alpha^{-1},\alpha^{-1}}=\alpha^{-1}$. This is because under the alternative hypothesis, $\hat{\delta}\rightarrow (\infty,\infty,\infty)^{\top}$ as $n\rightarrow\infty$, hence the power will only converge to one if the hyperrectangle $(\Phi^{-1}(1-\alpha/2),\infty)\times (\Phi^{-1}(1-\alpha/2),\infty)\times (\Phi^{-1}(1-\alpha/2),\infty)$ is included in the rejection region. For any given $\bm{A}$, there exists at least one isotopy satisfying this property, i.e., one can permute the rows, columns, and symbols of $\bm{A}$ to produce another Latin square with this property. The arrangement of the other items in the Latin square are relatively less important, and likely result in varying amounts of power in different parts of the parameter space. It is not clear whether one Latin square yields a test that uniformly dominates all others in terms of power.

The Bayes risk optimal test can in theory also be extended to tests of products of $\ell$ coefficients. The grid in $\mathbb{R}^2$ needs to be expanded to $\mathbb{R}^{\ell}$, and the grid on the null hypothesis space must be expanded to each of the $\mathbb{R}^{\ell-1}$ hyperplanes composing the new composite null hypothesis space. Further, the prior on the coefficients must be expanded to an $\ell$-dimensional distribution. However, the computational burden will clearly escalate rapidly with $\ell$, and may not be feasible even for dimensions of four or possibly even three without access to tremendous computing power and/or memory.

\section{Proofs}
\label{sec:proofs}
\begin{proof}[Proof of Theorem 1]
When $\delta^*_y=0$, for any $\delta^*_x\in\mathbb{R}$, 
\begin{align*}
 &\mathrm{Pr}_{\delta^*_x,\delta^*_y}\{(Z^x_*,Z^y_*)\in R_{mm}\}\\
 =&E_{\delta^*_x}\left[\mathrm{Pr}_{\delta^*_y}\left\{(Z^x_*,Z^y_*)\in R_{mm}\mid Z^x_*\right\}\right]\\
 =& E_{\delta^*_x}\left[\sum_{k=1}^{2/\alpha}I\left\{Z_*^x\in(a_{k-1},a_k)\right\}\mathrm{Pr}_{\delta^*_y}\left\{Z_*^y\in(a_{k-1},a_k)\cup (-a_k,-a_{k-1})\mid Z^x_*\right\}\right]\\
 =& \alpha E_{\delta^*_x}\left[\sum_{k=1}^{2/\alpha}I\left\{Z_*^x\in(a_{k-1},a_k)\right\}\right]\\
 =& \alpha.
\end{align*}
 The same holds for any $\delta^*_y\in\mathbb{R}$ when $\delta^*_x=0$ by symmetry. Thus, (7) holds with $R_{mm}$ substituted for $R$, and the test generated by the rejection region $R_{mm}$ is a similar test (because the null hypothesis space is its own boundary).
 %$\mathrm{Pr}_{\delta^*_x,\delta^*_y}\{(Z_*^x,Z_*^y)\in R_{mm}\}=\alpha$ for all $(\delta^*_x,\delta^*_y)\in\{(\delta^*_x,\delta^*_y):\delta^*_x\delta^*_y=0\}$, which is equal to the boundary of the null hypothesis space.
\end{proof}

\begin{proof}[Proof of Theorem 2]
For $\delta^*_y=0$, we wish to find an $R$ such that for all $\delta^*_x$, we will have
\begin{align*}
 \alpha &= \mathrm{Pr}_{(\delta^*_x,0)}\{(Z^x_*,Z^y_*)\in R^*\}
 = E_{\delta^*_x}\left[\mathrm{Pr}_{\delta^*_y=0}\left\{(Z^x_*,Z^y_*)\in R^*\mid Z^x_*\right\}\right],
\end{align*}
which implies that
$E_{\delta^*_x}\left[\mathrm{Pr}_{\delta^*_y=0}\left\{(Z^x_*,Z^y_*)\in R^*\mid
 Z^x_*\right\}-\alpha\right]=0$ for all $\delta^*_x$. Since
$Z^x_*\sim \mathcal{N}(\delta^*_x,1)$, which is a full-rank exponential
family, $Z^x_*$ is a complete statistic, and
\begin{align}
\mathrm{Pr}_{\delta^*_y=0}\allowbreak \left\{(Z^x_*,\allowbreak Z^y_*)\allowbreak \in \allowbreak R^*\mid Z^x_*\right\}\allowbreak =\allowbreak \alpha \;\; \text{almost everywhere.} \label{eq:condt1e}
\end{align}

\begin{lemma}
\label{lemma:step}
Suppose for any $k\in\{0,1,\ldots,1/\alpha-1\}$, $f(x)\geq \Phi^{-1}(\frac{1+k\alpha}{2})$ for all $x>\Phi^{-1}(\frac{1+k\alpha}{2})$. Then (i) $f(x)=\Phi^{-1}(\frac{1+k\alpha}{2})$ for all $x\in(\Phi^{-1}(\frac{1+k\alpha}{2}),\Phi^{-1}\{\frac{1+(k+1)\alpha}{2}\})$, %(ii) $f^{-1}(x)=\Phi^{-1}\{\frac{1+(k+1)\alpha}{2}\})$ for all $x\in(\Phi^{-1}(\frac{1+k\alpha}{2}),\Phi^{-1}\{\frac{1+(k+1)\alpha}{2}\})$, 
and (ii) $f(x)\geq \Phi^{-1}\{\frac{1+(k+1)\alpha}{2}\}$ for all $x>\Phi^{-1}\{\frac{1+(k+1)\alpha}{2}\}$.
\end{lemma}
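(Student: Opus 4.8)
The plan is to translate the conditional type~1 error identity \eqref{eq:condt1e} into a one-dimensional functional equation tying $f$ to its generalized inverse, and then read off both conclusions from monotonicity. Throughout, $f\in\mathcal F$ is the function with $R^*=R_f$, so $f$ is nondecreasing with $0\le f(x)\le x$, and \eqref{eq:condt1e} is in force.

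First I would make the vertical slice of $R_f$ explicit. Since $R_f$ is obtained from the base set $\{(x,y):f(x)<y\le x\}$ by permuting the coordinates and negating each of them, it is invariant under $y\mapsto -y$; hence for $z>0$ the conditional probability in \eqref{eq:condt1e} equals twice the standard normal probability of $\{y>0:(z,y)\in R_f\}$. Inside the open first quadrant only the base set and its reflection across the line $y=x$, namely $\{(x,y):f(y)<x\le y\}$, meet the vertical line $\{z\}\times\mathbb R$; the remaining six images of the base set lie in the other quadrants. Their slices at $z$ are $(f(z),z]$ and $\{y\ge z:f(y)<z\}$, whose union differs by at most one point from the interval $(f(z),f^{-1}(z))$, where $f^{-1}(z):=\sup\{y\ge 0:f(y)<z\}$ is the generalized inverse of $f$ (note $f^{-1}(z)\ge z$, since $f(y)\le y<z$ for $y<z$). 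Therefore \eqref{eq:condt1e} is equivalent to
\[
\Phi\!\left(f^{-1}(z)\right)-\Phi\!\left(f(z)\right)=\alpha/2\qquad\text{for a.e. }z>0 .
\]
Writing $t_j:=\Phi^{-1}\{(1+j\alpha)/2\}$, so that $t_0=0$ and $\Phi(t_{j+1})-\Phi(t_j)=\alpha/2$, the hypothesis of Lemma~\ref{lemma:step} reads $f\ge t_k$ on $(t_k,\infty)$, and the two conclusions read $f\equiv t_k$ on $(t_k,t_{k+1})$ and $f\ge t_{k+1}$ on $(t_{k+1},\infty)$.

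For (i), fix a generic $z\in(t_k,t_{k+1})$ at which the functional equation holds. From $f(z)\ge t_k$ we get $\Phi(f^{-1}(z))\ge\Phi(t_k)+\alpha/2=\Phi(t_{k+1})$, hence $f^{-1}(z)\ge t_{k+1}$. Since $f$ is nondecreasing, $f(y)<z$ for every $y<f^{-1}(z)$, so $f(y)<z$ for all $y<t_{k+1}$. Letting $z$ decrease to $t_k$ through the full-measure set of valid values gives $f(y)\le t_k$ for all $y<t_{k+1}$; combined with the hypothesis this yields $f(y)=t_k$ on $(t_k,t_{k+1})$. For (ii), substitute $f\equiv t_k$ on $(t_k,t_{k+1})$ into the functional equation to get $\Phi(f^{-1}(z))=\Phi(t_{k+1})$, i.e. $f^{-1}(z)=t_{k+1}$, for a.e. $z\in(t_k,t_{k+1})$; by definition of $f^{-1}$ this means $f(y)\ge z$ for every $y>t_{k+1}$, and letting $z$ increase to $t_{k+1}$ gives $f(y)\ge t_{k+1}$ for all $y>t_{k+1}$. (When $k+1=1/\alpha$, so $t_{k+1}=+\infty$, the same argument applies with $\Phi(t_{k+1})=1$, giving $f^{-1}(z)=+\infty$ and making (ii) vacuous.)

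The step I expect to be the real work is the geometric reduction: checking that exactly the base set and its transpose contribute inside the open first quadrant, that the leftover pieces (the diagonal $y=x$, the atypical $z$ with $f(z)=z$, interval endpoints, and the axis $y=0$) are Lebesgue-null, and that $f^{-1}$ as defined is indeed the upper endpoint of the slice, so that \eqref{eq:condt1e} really reduces to the displayed functional equation. One must also be a little careful about the order of the ``for a.e.\ $z$'' and ``for all $y$'' quantifiers when taking limits in $z$. Once the functional equation is in hand, (i) and (ii) are immediate from monotonicity of $f$ and strict monotonicity of $\Phi$.
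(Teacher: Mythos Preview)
Your proposal is correct and follows essentially the same route as the paper: both reduce \eqref{eq:condt1e} to the identity $2[\Phi(f^{-1}(z))-\Phi(f(z))]=\alpha$ for a.e.\ $z>0$ and then read off (i) and (ii) from monotonicity of $f$ and $f^{-1}$. The only cosmetic differences are that the paper argues (i) by contradiction (supposing $f(x_0)>t_k$ and exhibiting a positive-measure set where the conditional probability is $<\alpha$) whereas you argue directly, and the paper defines $f^{-1}(y)=\inf\{x:f(x)\ge y\}$ while you use $\sup\{y:f(y)<z\}$, which coincide for nondecreasing $f$; your treatment of the geometric reduction is in fact more explicit than the paper's, which simply asserts the formula.
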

\begin{proof}[Proof of Lemma \ref{lemma:step}]
(i) Suppose $f(x_0)=y_0>\Phi^{-1}(\frac{1+k\alpha}{2})$ for some \[x_0 \in \left(\Phi^{-1}\left(\frac{1+k\alpha}{2}\right), \Phi^{-1}\left\{\frac{1+(k+1)\alpha}{2}\right\}\right),\] and let $f^{-1}$ be the generalized function inverse $f^{-1}(y):=\inf\{x:f(x)\geq y\}$. Then $\Phi^{-1}\{\frac{1+(k+1)\alpha}{2}\}>x_0\geq\inf\{x:f(x)\geq y_0\}=f^{-1}(y_0)$ by the nondecreasing monotonicity of $f^{-1}$. For all $x'\in[\Phi^{-1}(\frac{1+k\alpha}{2}),y_0]$,
\begin{align*}
 \mathrm{Pr}_{\delta_y^*=0}\left\{(Z^x_*, Z^y_*) \in R^*\mid Z^x_*=x'\right\} &= 2\left[\Phi\left\{f^{-1}(x')\right\}-\Phi\left\{f(x')\right\}\right]\\
 &\leq 2\left[\Phi\left\{f^{-1}(y_0)\right\}-\Phi\left\{\Phi^{-1}\left(\frac{1+k\alpha}{2}\right)\right\}\right]\\
 &< 2\left(\Phi\left[\Phi^{-1}\left\{\frac{1+(k+1)\alpha}{2}\right\}\right]-\frac{1+k\alpha}{2}\right)\\
 &= 2\left\{\frac{1+(k+1)\alpha}{2}-\frac{1+k\alpha}{2}\right\}\\
 &= \alpha.
\end{align*}
Since $[\Phi^{-1}(\frac{1+k\alpha}{2}),y_0]$ is a set of positive measure if $y_0>\Phi^{-1}(\frac{1+k\alpha}{2})$, this contradicts \eqref{eq:condt1e}. Thus, $f(x)=\Phi^{-1}(\frac{1+k\alpha}{2})$ for all $x\in(\Phi^{-1}(\frac{1+k\alpha}{2}),\Phi^{-1}\{\frac{1+(k+1)\alpha}{2}\})$.

(ii) By \eqref{eq:condt1e}, for almost every $x\in(\Phi^{-1}(\frac{1+k\alpha}{2}),\Phi^{-1}\{\frac{1+(k+1)\alpha}{2}\})$,
\begin{align*}
 \alpha &= \mathrm{Pr}_{\delta_y^*=0}\left\{(Z^x_*, Z^y_*) \in R^*\mid Z^x_*=x\right\}\\
 &= 2\left[\Phi\left\{f^{-1}(x)\right\}-\Phi\left\{f(x)\right\}\right]\\
 &= 2\left[\Phi\left\{f^{-1}(x)\right\}-\frac{1+k\alpha}{2}\right],
\end{align*}
hence $f^{-1}(x)=\Phi^{-1}\{\frac{1+(k+1)\alpha}{2}\}$ almost everywhere in $(\Phi^{-1}(\frac{1+k\alpha}{2}),\Phi^{-1}\{\frac{1+(k+1)\alpha}{2}\})$. By nondecreasing monotonicity of $f^{-1}$, $f^{-1}(x)=\Phi^{-1}\{\frac{1+(k+1)\alpha}{2}\}$ for all $x\allowbreak \in\allowbreak (\Phi^{-1}(\frac{1+k\alpha}{2}),\allowbreak \Phi^{-1}\{\frac{1+(k+1)\alpha}{2}\})$.

Now suppose $x_1>\Phi^{-1}\{\frac{1+(k+1)\alpha}{2}\}$ and $f(x_1)<\Phi^{-1}\{\frac{1+(k+1)\alpha}{2}\}$. Then for $x''\allowbreak \in\allowbreak (f(x_1),\allowbreak \Phi^{-1}\{\frac{1+(k+1)\alpha}{2}\})$, $f^{-1}(x'')\geq x_1>\Phi^{-1}\{\frac{1+(k+1)\alpha}{2}\}$, which contradicts $f^{-1}(x'')=\Phi^{-1}\{\frac{1+(k+1)\alpha}{2}\}$ since $x''\in(\Phi^{-1}(\frac{1+k\alpha}{2}),\Phi^{-1}\{\frac{1+(k+1)\alpha}{2}\})$. Thus, $f(x)\geq \Phi^{-1}\{\frac{1+(k+1)\alpha}{2}\}$ for all $x\allowbreak >\allowbreak \Phi^{-1}\{\frac{1+(k+1)\alpha}{2}\}$.
\end{proof}
By the constraints on $\mathcal{F}$, $f(x)\geq 0=\Phi^{-1}(1/2)$ for all $x> 0=\Phi^{-1}(1/2)$. Applying Lemma \ref{lemma:step}, for each $k\in\{0,1,\ldots,2/\alpha-1\}$, we have $f(x)=\Phi^{-1}(\frac{1+k\alpha}{2})$ for all $x\in(\Phi^{-1}(\frac{1+k\alpha}{2}),\Phi^{-1}\{\frac{1+(k+1)\alpha}{2}\})$ by induction. That is, $f(x)=a_{k-1}$ for all $x\in (a_{k-1},a_k)$. Additionally, $f(0)=0$ must hold. The only set of points on which we have not defined $f$ is \[\left\{a_{1/\alpha+1},a_{1/\alpha+2},\ldots,a_{2/\alpha}\right\}=\left\{\Phi^{-1}\left(\frac{1+\alpha}{2}\right),\Phi^{-1}\left(\frac{1+2\alpha}{2}\right),\ldots,\Phi^{-1}(1)\right\},\] which is a set of measure zero. Thus, any function $f$ satisfying \eqref{eq:condt1e} and hence generating a similar test must be equal to $f_{mm}$ everywhere except for the above set of measure zero.
\end{proof}

\begin{proof}[Proof of Theorem 3]
Suppose $R^{\dag}\subset\mathbb{R}^2$ is a rejection region bounded away from $\{(Z^x_*, Z^y_*): Z^x_*Z^y_* = 0\}$. Then there is some $\varepsilon>0$ for which $R^{\dag} \cap \{(Z^x_*, Z^y_*)^{\top}: -\varepsilon<Z^x_*<\varepsilon\} = \emptyset$. For all $Z^x_*\in(-\varepsilon,\varepsilon)$, a set of positive measure, $\mathrm{Pr}\left\{(Z^x_*, Z^y_*)^{\top}\in R^{\dag}\mid Z^x_*\right\}=0$, hence \eqref{eq:condt1e} does not hold for $R^{\dag}$. Thus, $\mathrm{Pr}\left\{(Z^x_*, Z^y_*)^{\top}\in R^{\dag}\right\}=0.05$ cannot hold for all $(\delta_x^*,\delta_y^*)$ in $H_0$, and $R^{\dag}$ does not generate a similar test.
\end{proof}

\begin{proof}[Proof of Theorem 4]
    It won't  come as a  surprise that  the proof is  very similar to  that of
    Theorem 1.          For         brevity,         introduce
    $B_{k}:=      (b_{k-1},b_k)\cup       (-b_k,-b_{k-1})$      for      every
    $k = 1, \ldots, \lfloor \alpha^{-1}\rfloor$.  By symmetry, we can focus on
    the case  where $\delta^*_y=0$  and $\delta^*_x$  is fixed  arbitrarily in
    $\mathbb{R}$.                           Note                          that
    $\mathrm{Pr}_{\delta^*_y=0}(Z_*^y\in  B_{k}\mid  Z^x_*)  =  \alpha$  for
    every $k=1, \ldots, \lfloor\alpha^{-1}\rfloor$. Therefore, it holds that
    \begin{align*}
      &\mathrm{Pr}_{\delta^*_x,\delta^*_y=0}
        \{(Z^x_*,Z^y_*)\in R_{mm}'\}\\
      &=E_{\delta^*_x}\Bigg[
        \mathrm{Pr}_{\delta^*_y=0}\left\{(Z^x_*,Z^y_*)\in R_{mm}'\mid Z^x_*\right\}\Bigg]\\
      &= E_{\delta^*_x}\Bigg[
        \sum_{k=1}^{\lfloor\alpha^{-1}\rfloor}I\left\{Z_*^x\in
        (b_{k-1},b_{k})\right\}\mathrm{Pr}_{\delta^*_y=0}\left(Z_*^y\in
        B_{k}\mid Z^x_*\right)\\
      &\qquad\qquad + \sum_{k=1}^{\lfloor\alpha^{-1}\rfloor}I\left\{Z_*^x\in
        (-b_{k},-b_{k-1})\right\}\mathrm{Pr}_{\delta^*_y=0}\left(Z_*^y\in
        B_{k}\mid Z^x_*\right)\\
      &\qquad\qquad + I\left\{Z_*^x\in
        (-b_{0},b_{0})\right\}\mathrm{Pr}_{\delta^*_y=0}\left\{Z_*^y\in
        (-b_{0},b_{0})\mid Z^x_*\right\} \Bigg]\\
      &=                                                                 \alpha
        \mathrm{Pr}_{\delta^*_x}
        \left\{Z_*^x\not\in(-b_{0},b_{0})\right\}     +
        \mathrm{Pr}_{\delta^*_x}\left\{Z_*^x\in(-b_{0},b_{0})\right\}
        \mathrm{Pr}_{\delta^*_y=0}\left\{Z_*^y\in(-b_{0},b_{0})\right\}\\
      &= \alpha
        \mathrm{Pr}_{\delta^*_x}
        \left\{Z_*^x\not\in(-b_{0},b_{0})\right\} +
        \mathrm{Pr}_{\delta^*_x}\left\{Z_*^x\in(-b_{0},b_{0})\right\}
        \left(1 - \lfloor \alpha^{-1}\rfloor\alpha\right)\\
      &= \alpha
        +        \mathrm{Pr}_{\delta^*_x}\left\{Z_*^x\in(-b_{0},b_{0})\right\}
        \left(1 - \lfloor \alpha^{-1}\rfloor \alpha - \alpha\right).  
    \end{align*}
    Because $(1  - \lfloor  \alpha^{-1}\rfloor \alpha -  \alpha) \leq  0$, the
    right-hand side  expression is  smaller than $\alpha$,  which is  also the
    limit  as $\delta_{x}^{*}  \to  +\infty$.  Moreover,  the right-hand  side
    expression  is  minimized  at  $\delta_{x}^{*}   =  0$,  where  it  equals
    $\lfloor \alpha^{-1}\rfloor  \alpha^{2} + (1 -  \lfloor \alpha^{-1}\rfloor
    \alpha)^{2}$. This completes the proof.
\end{proof}

\begin{proof}[Derivation of the $p$-value formula]
Set
$z  =  (z_{1},  z_{2})\in  \mathbb{R}^{2}\setminus\{t   \in  \mathbb{R}^{2}  :  |t_{1}|  =
|t_{2}|\}$     and     define      $\bz     =     \max(|z_{1}|,     |z_{2}|)$,
$\zb =  \min(|z_{1}|, |z_{2}|)$.  For  reasons that will  become apparent
  later,                 suppose                 moreover                 that
  $\Phi(\bz) \not\in  \{(2+\ell)/[2(\ell + 1)] :  \ell \in \mathbb{N}\}$. If  $z$ is
  the realization  of a  random variable  $Z$ drawn in  $\mathbb{R}^{2}$ from  a law
  dominated by the Lebesgue measure, then  $z$ meets all the conditions almost
  surely. 

Set  $\ell   \geq  1$  and   $\alpha  \in  ](\ell+1)^{-1},  \ell]$,   so  that
$\lfloor \alpha^{-1}\rfloor  = \ell$. It holds  that $z \in R(\alpha)$  if and
only if (iff)
\begin{enumerate}
\item $\alpha \ell \leq 2(1 - \Phi(\bz))$, or
\item $\alpha \geq 2(1 - \Phi(\zb))$, or
\item $\exists k \in \{1, \ldots, \ell-1\}$ such that
  \begin{equation}
    \label{eq:encadrement}
    \frac{2(1-\Phi(\zb))}{k+1} \leq \alpha \leq \frac{2(1-\Phi(\bz))}{k}.
  \end{equation}
\end{enumerate}
Note that the  three conditions are exclusive. Moreover, there  exists at most
one $k$ such that \eqref{eq:encadrement} holds true. Therefore,
\begin{align}
  \notag
  & \int_{0}^{1} \1\{z \in R(\alpha)\} d\alpha\\
  \label{eq:term1}
  & = \sum_{\ell \geq 1} \int_{(\ell+1)^{-1}}^{\ell^{-1}} \1\left\{\alpha\ell \leq
    2(1 - \Phi(\bz))\right\} d\alpha\\
  \label{eq:term2}
  &    \quad   +    \sum_{\ell   \geq    1}   \int_{(\ell+1)^{-1}}^{\ell^{-1}}
    \1\left\{\alpha \geq 2(1 - \Phi(\zb))\right\} d\alpha\\
  \label{eq:term3}
  &    \quad   +    \sum_{\ell   \geq    1}   \int_{(\ell+1)^{-1}}^{\ell^{-1}}
    \sum_{k=1}^{\ell-1}\1\left\{\frac{2(1-\Phi(\zb))}{k+1}  \leq  \alpha  \leq
    \frac{2(1-\Phi(\bz))}{k}\right\} d\alpha. 
\end{align}
Next,    we   study    the   summands    \eqref{eq:term1},   \eqref{eq:term2},
\eqref{eq:term3} in turn.

Introduce for convenience notation for the $j$th harmonic number $S_{1}(j) = \sum_{i=1}^{j} i^{-1}$
% and $S_{2}(j) = \sum_{i=1}^{j} [i(i+1)]^{-1} = 1 - (j+1)^{-1}$
for any $j \geq 1$. Set also $S_{1}(0)=0$.
\paragraph*{Summand \eqref{eq:term1}.} 
  Define
  \begin{equation}
    \label{eq:L}
    L   =   \left\lfloor  \frac{2(1-\Phi(\bz))}{2\Phi(\bz)-1}\right\rfloor   <
    \frac{2(1-\Phi(\bz))}{2\Phi(\bz)-1}.
  \end{equation}
  The  inequality is  entailed by  the constraints  imposed on  $z$. Moreover,
  $L\geq 1$  iff $\Phi(\bz) \leq  \tfrac{3}{4}$.  Now, observe that,  for any
integer $\ell \geq 1$,
\begin{equation*}
  \frac{1}{\ell+1}  \leq  \frac{2(1  - \Phi(\bz))}{\ell}  \leq  \frac{1}{\ell}
  \text{      iff       }      \ell      \leq      L.
\end{equation*}
It follows that
\begin{align}
  \notag
  \eqref{eq:term1}
  &=  \1\{L  \geq   1\}  \sum_{\ell=1}^{L}  \int  _{(\ell+1)^{-1}}^{\ell^{-1}}
    \1\left\{\alpha\ell \leq 2(1 - \Phi(\bz))\right\} d\alpha\\ 
  \notag
  &=   \1\{L  \geq   1\}  \sum_{\ell=1}^{L}    \left(\frac{2(1     -    \Phi(\bz))}{\ell}     -
    \frac{1}{\ell+1}\right) \\
  \notag
  &= \1\{L \geq 1\} \left[2(1-\Phi(\bz)) S_{1}(L) - S_{1}(L+1) + 1\right]\\
  \notag
  &= \1\{L \geq 1\} \left[1 - (2\Phi(\bz)-1) S_{1}(L) - (L+1)^{-1}\right]\\
  \label{eq:term1:bis}
  &= 1 - (2\Phi(\bz)-1) S_{1}(L) - (L+1)^{-1}. %\qquad \text{(the  indicator can  be removed indeed)}. 
\end{align}

\paragraph*{Summand \eqref{eq:term2}.} It is obvious that
\begin{equation}
  \label{eq:term2:bis}
  \eqref{eq:term2} = 1 - 2(1-\Phi(\zb)) = 2 \Phi(\zb) - 1.
\end{equation}

\paragraph*{Summand \eqref{eq:term3}.} We first note that, if $k \geq 1$ is an
integer, then
\begin{equation*}
  \frac{2(1-\Phi(\zb))}{k+1} \leq \frac{2(1-\Phi(\bz))}{k}  \text{ iff } k
  \leq    K    =    \left\lfloor     \frac{1    -    \Phi(\bz)}{\Phi(\bz)    -
      \Phi(\zb)}\right\rfloor.  
\end{equation*}
Moreover,   $K  \geq   L$  and   $K  \geq   1$  iff   $2\Phi(\bz)  \leq   1  +
\Phi(\zb)$. Consequently,
\begin{align}
  \notag
  \eqref{eq:term3}
  & = \1\{K \geq 1\} \sum_{\ell \geq 1} \int_{(\ell+1)^{-1}}^{\ell^{-1}}
    \sum_{k=1}^{K}  \1\{k  \leq  \ell-1\}  \1\left\{\frac{2(1-\Phi(\zb))}{k+1}
    \leq \alpha \leq \frac{2(1-\Phi(\bz))}{k}\right\} d\alpha\\
  \notag
  &  = \1\{K \geq 1\} \sum_{k=1}^{K}  \sum_{\ell  \geq k+1}  \int_{(\ell+1)^{-1}}^{\ell^{-1}}
    \1\left\{\frac{2(1-\Phi(\zb))}{k+1}        \leq         \alpha        \leq
    \frac{2(1-\Phi(\bz))}{k}\right\} d\alpha\\
  \label{eq:term3:bis}
  & = \1\{K \geq 1\} \sum_{k=1}^{K} \int_{0}^{(k+1)^{-1}} 
    \1\left\{\frac{2(1-\Phi(\zb))}{k+1}        \leq         \alpha        \leq
    \frac{2(1-\Phi(\bz))}{k}\right\} d\alpha.
\end{align}
Now, note that, for any $k\geq 1$,

  \begin{equation*}
    \frac{2(1-\Phi(\bz))}{k} \leq \frac{1}{k+1}\quad
    \text{ iff } \quad k \geq \frac{2(1-\Phi(\bz))}{2\Phi(\bz)-1} \quad
    \text{ iff } \quad k \geq L+1,
  \end{equation*}
  the second equivalence  following from the inequality  in \eqref{eq:L}.  In
light of \eqref{eq:term3:bis}, it thus holds that
% \begin{align}
%   \notag
%   \eqref{eq:term3}
%   &    =   \1\{L    \geq    2\}    \sum_{k=1}^{L-1}   \left(\frac{1}{k+1}    -
%     \frac{2(1-\Phi(\zb))}{k+1}\right) + 
%     \1\{K\geq 1\}\sum_{k=1\vee L}^{K} \left(\frac{2(1-\Phi(\bz))}{k} -
%     \frac{2(1-\Phi(\zb))}{k+1}\right)\\
%   \notag
%   & = \1\{L \geq 2\}(2\Phi(\zb)-1) (S_{1}(L)-1)\\
%   \notag
%   & \quad + 2\1\{K\geq 1\}(1-\Phi(\bz)) [S_{2}(K) - S_{2}(0 \vee (L-1))] \\
%   \label{eq:term3:ter}
%   & \quad - 2\1\{K\geq 1\}
%     (\Phi(\bz) - \Phi(\zb)) [S_{1}(K+1) - S_{1}(1 \vee L)].
%   \end{align}
\begin{align}
  \notag
  \eqref{eq:term3}
  &  = \1\{K  \geq  1 >  L=0\}  \sum_{k=L+1}^{K} \left(\frac{2(1-\Phi(\bz))}{k}  -
    \frac{2(1-\Phi(\zb))}{k+1}\right)\\
  \notag
  & \quad + \1\{L \geq 1\} \sum_{k=1}^{L} \left(\frac{1}{k+1} -
    \frac{2(1-\Phi(\zb))}{k+1}\right)\\
  \notag
  & \quad + \1\{K > L \geq 1\} \sum_{k=L+1}^{K} \left(\frac{2(1-\Phi(\bz))}{k} - 
    \frac{2(1-\Phi(\zb))}{k+1}\right)\\
  \notag
  & = \1\{L \geq 1\} [(2\Phi(\zb)-1) (S_{1}(L+1) - 1)]\\
  \notag
  & \quad + \left[\1\{K \geq 1 > L=0\} + \1\{K > L \geq 1\} \right]
    \sum_{k=L+1}^{K}               \left(\frac{2(1-\Phi(\bz))}{k}              -
    \frac{2(1-\Phi(\zb))}{k+1}\right) \\
  \notag
  &  = (2\Phi(\zb)-1)  (S_{1}(L+1) -  1) \\ %\qquad \text{(the  indicator can  be removed indeed)}\\
  \notag
  & \quad + \1\{K > L\} \sum_{k=L+1}^{K} \left(\frac{2(1-\Phi(\bz))}{k} - 
    \frac{2(1-\Phi(\zb))}{k+1}\right) \\
  \notag
  &  =  (2\Phi(\zb)-1) (S_{1}(L+1) - 1)\\
  \notag 
  & \quad +  \1\{K  >  L\}  \left\{2(1   -  \Phi(\bz))  [S_{1}(K)  -   S_{1}(L)]  -
    2(1-\Phi(\zb)) [S_{1}(K+1) - S_{1}(L+1)]\right\} \\
  \notag
  &  =  (2\Phi(\zb)-1) (S_{1}(L+1) - 1)\\
  \notag 
  & \quad - 2\1\{K > L\} \{(\Phi(\bz) - \Phi(\zb)) [S_{1}(K) - S_{1}(L)]\}\\
  \label{eq:term3:ter}
  & \quad + 2\1\{K > L\} \{(1 - \Phi(\zb)) [(L+1)^{-1} - (K+1)^{-1}]\}.
\end{align}

In view  of \eqref{eq:term1:bis},  \eqref{eq:term2:bis}, \eqref{eq:term3:ter},
we obtain 
  \begin{align*}
    \int_{0}^{1} \1\{z \in R(\alpha)\} d\alpha
    & = 2 \Phi(\zb) - 1\\
    & \quad + 1 - (2\Phi(\bz)-1) S_{1}(L) - (L+1)^{-1}\\ 
    & \quad +  (2\Phi(\zb)-1) (S_{1}(L+1) - 1)\\
    & \quad - 2\1\{K > L\} \{(\Phi(\bz) - \Phi(\zb)) [S_{1}(K) - S_{1}(L)]\}\\
    & \quad + 2\1\{K > L\} \{(1 - \Phi(\zb)) [(L+1)^{-1} - (K+1)^{-1}]\}\\
    & = 1 - 2(1 - \Phi(\zb)) (L+1)^{-1} - 2(\Phi(\bz) - \Phi(\zb)) S_{1}(L)\\
    & \quad - 2\1\{K > L\} \{(\Phi(\bz) - \Phi(\zb)) [S_{1}(K) - S_{1}(L)]\}\\
    & \quad + 2\1\{K > L\} \{(1 - \Phi(\zb)) [(L+1)^{-1} - (K+1)^{-1}]\}\\
    &=  1 -  2(1  - \Phi(\zb))  (K\vee L+1)^{-1}  -  2(\Phi(\bz) -  \Phi(\zb))
      S_{1}(K\vee L)\\
    &=  1 -  2(1  - \Phi(\zb))  (K+1)^{-1}  -  2(\Phi(\bz) -  \Phi(\zb))
      S_{1}(K).
  \end{align*}

We conclude that 
  \begin{equation}
    \label{eq:p-val}
    \int_{0}^{1}  \1\{z  \not\in  R(\alpha)\}   d\alpha  =  2\{1  -  \Phi(\zb)\}
    (K+1)^{-1} + 2\{\Phi(\bz) - \Phi(\zb)\} S_{1}(K).  
  \end{equation}
\end{proof}

\begin{proof}[Proof of Theorem \ref{thm:latin}]
\begin{comment}
First, we will construct a rejection region corresponding to a given Latin square of order $\alpha^{-1}$, $A$. Let $1,\ldots,\alpha^{-1}$ be the set of symbols populating $A$, and let $A_{i,j}$ denote the symbol in the $i$-th row and $j$-th column of $A$. Define $c_0:= 0$ and $c_k:= \Phi^{-1}(k\alpha/2)$ for all $k=1,\ldots,\alpha^{-1}$. We define the rejection region $R^{\dag}\in\mathbb{R}^3$ corresponding to $A$ to be the following union of open hyperrectangles in the nonnegative orthant of $\mathbb{R}^3$:
\[\bigcup_{i=1}^{\alpha^{-1}}\bigcup_{j=1}^{\alpha^{-1}}(c_{i-1},c_i)\times(c_{j-1},c_j)\times(c_{A_{i,j}-1},c_{A_{i,j}}).\]
The test corresponding to $R^{\dag}$ rejects if $\lvert Z_*\rvert\in R^{\dag}$.
\end{comment}

%We show that this produces a similar test of $H_0^3$. 
Let $\lvert Z_*\rvert:= (\lvert Z_*^1\rvert, \lvert Z_*^2\rvert, \lvert Z_*^3\rvert)^{\top}$. When $\delta^*_3=0$, for any $\delta^*_1, \delta^*_2\in\mathbb{R}$, 
\begin{align*}
 &\mathrm{Pr}_{\delta^*}\{\lvert Z_*\rvert\in R^{\dag}\}\\
 &=E_{\delta^*_1,\delta^*_2}\left[\mathrm{Pr}_{\delta^*_3}\left\{\lvert Z_*\rvert\in R^{\dag}\mid Z^1_*,Z^2_*\right\}\right]\\
 &= E_{\delta^*_1,\delta^*_2}\left[\sum_{i=1}^{\alpha^{-1}}\sum_{j=1}^{\alpha^{-1}}I\left\{(\lvert Z_*^1\rvert,\vert Z_*^2\rvert)\in (c_{i-1},c_i)\times(c_{j-1},c_j)\right\}\mathrm{Pr}_{\delta^*_3}\left\{\lvert Z_*^3\rvert\in(c_{A_{i,j}-1},c_{A_{i,j}})\mid Z^1_*, Z^2_*\right\}\right]\\
 &= \alpha E_{\delta^*_1,\delta^*_2}\left[\sum_{i=1}^{\alpha^{-1}}\sum_{j=1}^{\alpha^{-1}}I\left\{(\lvert Z_*^1\rvert,\vert Z_*^2\rvert)\in (c_{i-1},c_i)\times(c_{j-1},c_j)\right\}\right]\\
 &= \alpha.
\end{align*}

%The Latin square property dictates that for a given row and a given symbol of $A$, there is a unique column in $A$ containing that symbol, and likewise that for a given row and a given symbol of $A$, there is a unique row in $A$ containing that symbol. 
We may alternatively represent $A$ in its orthogonal array representation, which is an array with rows corresponding to each entry of $A$ consisting of the triple (row, column, symbol). Any permutation of the coordinates of these triples yields an orthogonal array corresponding to another Latin square known as a conjugate. Let $A^{132}$ be the conjugate of $A$ generated by exchanging the second and third columns of the orthogonal array representation of $A$. The region $R^{\dag}$ can alternatively be expressed as
\[\bigcup_{i=1}^{\alpha^{-1}}\bigcup_{k=1}^{\alpha^{-1}}(c_{i-1},c_i)\times(c_{A^{132}_{i,k}-1},c_{A^{132}_{i,k}})\times(c_{k-1},c_k).\]

When $\delta_2^*=0$, for any $\delta_1^*, \delta_3^*\in\mathbb{R}$, we have the analogous result:
\begin{align*}
 &\mathrm{Pr}_{\delta^*}\{\lvert Z_*\rvert\in R^{\dag}\}\\
 &=E_{\delta^*_1,\delta^*_3}\left[\mathrm{Pr}_{\delta^*_2}\left\{\lvert Z_*\rvert\in R^{\dag}\mid Z^1_*,Z^3_*\right\}\right]\\
 &= E_{\delta^*_1,\delta^*_3}\left[\sum_{i=1}^{2/\alpha}\sum_{k=1}^{2/\alpha}I\left\{(\lvert Z_*^1\rvert,\vert Z_*^3\rvert)\in (c_{i-1},c_i)\times(c_{k-1},c_k)\right\}\mathrm{Pr}_{\delta^*_2}\left\{\lvert Z_*^2\rvert\in(c_{A^{132}_{i,k}-1},c_{A^{132}_{i,k}})\mid Z^1_*, Z^3_*\right\}\right]\\
 &= \alpha E_{\delta^*_1,\delta^*_3}\left[\sum_{i=1}^{2/\alpha}\sum_{k=1}^{2/\alpha}I\left\{(\lvert Z_*^1\rvert,\vert Z_*^3\rvert)\in (c_{i-1},c_i)\times(c_{k-1},c_k)\right\}\right]\\
 &= \alpha.
\end{align*}

Obviously, the same holds for any $\delta^*_2,\delta^*_3\in\mathbb{R}$ when $\delta^*_1=0$ by symmetry. Thus, $\mathrm{Pr}_{\delta^*}\{\lvert Z_*\rvert\in R^{\dag}\}=\alpha$ for all $(\delta^*_1,\delta^*_2,\delta^*_3)\in\{(\delta^*_1,\delta^*_2,\delta^*_3):\delta^*_1\delta^*_2\delta^*_3=0\}$, which is equal to the boundary of the null hypothesis space, so $R^{\dag}$ generates a similar test. 

Clearly, the test is symmetric with respect to negations by construction. A Latin square is totally symmetric if it is equal to all of its conjugates. Thus, if $A$ is totally symmetric, then $R^{\dag}=R^{\sigma}$ for all permutations $\sigma$ of (1,2,3), and the test corresponding to a totally symmetric Latin square is also symmetric with respect to permutations.
\end{proof}

\begin{comment}
\section{Large-scale hypothesis testing analyses}
\subsection{Simulations}
\cite{du2023methods} ran a large simulation study comparing a number of methods for performing large-scale hypothesis testing. We reproduce several of their simulation settings and apply our proposed methodology to compare performance with the methods they compared. In particular, we performed simulations under their setting (a) with $n=200$ (sample size) and $\tau=0.1$, which controls how dispersed the true parameters $\delta_x$ and $\delta_y$ are under their respective alternatives. See \cite{du2023methods} for more details about the simulation set-up. 

\subsection{Normative Aging Study analysis}
\cite{liu2022large} performed large-scale hypothesis testing on the Normative Aging Study (NAS) to test the mediated effect of smoking status on forced expiratory flow at 25\%--75\% of the Forced Expiratory Vital capacity (FEF$_{25–75\%}$), a measure of lung functionm, through various DNA methylation CpG cites among men in Eastern Massachusetts. We apply our proposed methodology to this same data and compare results. See \cite{liu2022large} for more details regarding this data. 

Based on the minimax optimal test with a Bonferroni correction controlling FWER at 0.05, we detect significant mediated effects through ...
Using the $p$-value corresponding to the minimax optimal test with Benjamini-Hochberg correction to control FDR at 0.05, we detect significant mediated effects through ...
\end{comment}

%\bibliographystyle{apalike}
%\bibliographystyle{Chicago}

%\bibliography{references}

\end{document}